\documentclass[11pt]{amsart}
\usepackage[latin1]{inputenc}
\usepackage[T1]{fontenc}
\usepackage{amsfonts}
\usepackage{amsmath}
\usepackage{amssymb}
\usepackage{graphicx}
\usepackage{pstricks}
\usepackage{pst-grad}
\usepackage{multido}
\usepackage{color}
\usepackage{amsthm}
\usepackage{geometry}
\geometry{ hmargin=4cm, vmargin=3cm }
\usepackage[absolute]{textpos}
\setlength{\TPHorizModule}{1cm}
\setlength{\TPVertModule}{1cm}

\definecolor{purple}{rgb}{0.8,0.12,0.8}
\definecolor{orange}{rgb}{1.0,0.7,0.0}
\definecolor{pink}{rgb}{1,0.5,0.8}
\definecolor{blackg}{rgb}{0.1,0.25,0.1}
\definecolor{ForestGreen}{cmyk}{0.91,0,0.88,0.42}
\definecolor{Turquoise}{cmyk}{0.85,0,0.20,0}


\newcommand{\cA}{\mathcal{A}}
\newcommand{\cB}{\mathcal{B}}
\newcommand{\cC}{\mathcal{C}}

\newcommand{\cF}{\mathcal{F}}

\newcommand{\cH}{\mathcal{H}}

\newcommand{\cJ}{\mathcal{J}}

\newcommand{\cL}{\mathcal{L}}
\newcommand{\cLR}{\mathcal{LR}}
\newcommand{\cM}{\mathcal{M}}

\newcommand{\cP}{\mathcal{P}}

\newcommand{\cR}{\mathcal{R}}

\newcommand{\cU}{\mathcal{U}}

\newcommand{\sg}{\mathcal{h}}
\newcommand{\sd}{\mathcal{i}}


\newcommand{\bS}{\mathbf{S}}

\newcommand{\ba}{\mathbf{a}}




\newcommand{\fB}{\mathfrak{B}}
\newcommand{\fC}{\mathfrak{C}}

\newcommand{\fH}{\mathfrak{H}}

\newcommand{\fS}{\mathfrak{S}}

\newcommand{\nZ}{\mathbb{Z}}

\newcommand{\nN}{\mathbb{N}}


\newcommand{\la}{\lambda}

\newcommand{\eps}{\epsilon}


\newcommand{\tA}{\tilde{A}}
\newcommand{\tB}{\tilde{B}}
\newcommand{\tC}{\tilde{C}}

\newcommand{\tF}{\tilde{F}}
\newcommand{\tG}{\tilde{G}}
\newcommand{\tW}{\tilde{W}}
\newcommand{\tJ}{\tilde{J}}
\newcommand{\tL}{\tilde{L}}

\newcommand{\tc}{\tilde{c}}
\newcommand{\tb}{\tilde{b}}
\newcommand{\tT}{\tilde{t}}

\newcommand{\sqs}{\sqsubset}
\newcommand{\sq}{\sqsubseteq}
\newcommand{\ov}{\overline}
\newcommand{\ind}{\underset}




\newtheorem{Th}{Theorem}[section]

\newtheorem{Lem}[Th]{Lemma}

\newtheorem{Prop}[Th]{Proposition}
\newtheorem{Def-Prop}[Th]{Definition-Proposition}
\newtheorem{conj}[Th]{Conjecture}
\theoremstyle{definition}

\newtheorem{Cl}[Th]{Claim}
\theoremstyle{remark}
\newtheorem{Rem}[Th]{Remark}

\begin{document}

\title{Kazhdan-Lusztig cells in affine Weyl groups of rank $2$}
\author{J\'er\'emie Guilhot}
\address{School of Mathematics and Statistics F07
University of Sydney NSW 2006
Australia }
\email{guilhot@maths.usyd.edu.au}

\date{January, 2009}
\begin{abstract}
In this paper we determine the partition into Kazhdan-Lusztig cells of the affine Weyl groups of type $\tB_{2}$ and $\tG_{2}$ for any choice of parameters. Using these partitions we show that the semicontinuity conjecture of Bonnaf\'e holds for these groups.
\end{abstract}

\maketitle

\section{Introduction}
Let $W$ be a Coxeter group with generating set $S$. Following Lusztig, let $L$ be a weight function on $W$, that is a function $L:W\longrightarrow \nZ$ such that $L(ww')=L(w)+L(w')$ whenever $\ell(ww')=\ell(w)+\ell(w')$ (where $\ell$ is the usual length function in $(W,S)$). This gives rise to various pre-order relations $\leq_{\cL}$, $\leq_{\cR}$ and $\leq_{\cLR}$ which in turn give rise to Kazhdan-Lusztig left, right and two-sided cells, respectively. Cells for an arbitrary Coxeter group were first defined in the equal parameter case (i.e. $L=\ell$) by Kazhdan and Lusztig \cite{KL1} and subsequently in the unequal parameter case by Lusztig \cite{Lus1p}. Their construction relies on the existence of the Kazhdan-Lusztig basis in the corresponding Iwahori-Hecke algebra. Cells are known to play a fundamental role in the representation theory of reductive groups over finite or $p$-adic fields.

In this paper, we are especially interested in the partition of affine Weyl groups into cells. In the equal parameter case the situation is relatively well understood due to the fact that there is a geometric interpretation of the Kazhdan-Lusztig basis. This interpretation yields some positivity properties such as the positivity of the coefficients of the structure constants with respect to the Kazhdan-Lusztig basis. The cells have been explicitly described for type $\tilde{A}_{r},r\in\mathbb{N}$  \cite{Lus2, Shi1}, ranks 2, 3 \cite{Bed,Lus3,Du} and types $\tilde{B}_{4}$ \cite{B4}, $\tilde{C}_{4}$ \cite{Shi4}, $\tilde{D}_{4}$ \cite{Chen,Shi3} and $\tF_{4}$ \cite{Shi2,Shi5}.

In the unequal parameter case a general geometric interpretation does not yet exist and the positivity properties do not hold anymore. Therefore, the knowledge in this case is nowhere near the one in the equal parameter case. The cells have been explicitly described in type $\tA_{1}$ for all parameters \cite{bible}, in type $\tB_{2}$ for some specific choices of parameters which still admit a geometric interpretation \cite{bremke} and in type $\tG_{2}$ in the so-called asymptotic case \cite{jeju3}.


The aims of this paper are as follows. First, given an affine Weyl group $W$ and a weight function $L$ on $W$, we will present an algorithm which determines a partition of $W$ into finitely many pieces. We conjecture that this partition is precisely the partition of $W$ into cells with respect to $L$. We will be able to deduce this conjecture  from some general conjectures of Lusztig \cite{conn,bible}. Our conjecture (if true in general) reduces the problem of determining the cells of $W$ to the analogous problem for the finite parabolic  subgroups of $W$. We then apply this algorithm to affine Weyl groups of type $\tB_{2}$ and $\tG_{2}$ and show that, for any weight function, the resulting partition is indeed the partition into cells. The proofs rely on some general methods we developed in \cite{jeju1,jeju2,jeju3}, and some explicit computer calculations using GAP \cite{GAP}. These results for $W$ of type $\tB_{2}$ and $\tG_{2}$ provide the first substantial examples for Bonnaf\'e's semicontinuity conjecture \cite{semi} in the affine case.


\section{Generalities}
In this section $(W,S)$ denotes an arbitrary Coxeter system (with $|S|<\infty$) together with a weight function $L$. 


\subsection{Hecke algebras and Kazhdan-Lusztig basis}
Let $\cA=\nZ[v,v^{-1}]$ where $v$ is an indeterminate. Let $\cH$ be the Iwahori-Hecke algebra associated to $W$, with $\cA$-basis  $\{T_{w}|w\in W\}$ and multiplication rule given by
\begin{equation*}
T_{s}T_{w}=
\begin{cases}
T_{sw}, & \mbox{if } \ell(sw)>\ell(w),\\
T_{sw}+(v^{L(s)}-v^{-L(s)})T_{w}, &\mbox{if } \ell(sw)<\ell(w),
\end{cases}
\end{equation*}
for all $s\in S$ and $w\in W$. Let $\bar\ $ be the involution of $\cA$ which takes $v$ to $v^{-1}$. It can be extended to a ring involution of $\cH$ via
$$\ov{\sum_{w\in W} a_{w}T_{w}}=\sum_{w\in W} \bar{a}_{w}T_{w^{-1}}^{-1}\quad (a_{w}\in\cA).$$
We set $\cA_{<0}=v^{-1}\nZ[v^{-1}]$ and  $\cA_{\leq 0}=\nZ[v^{-1}]$. Let $w\in W$. There exists a unique element $C_{w}\in\cH$ (see \cite[Theorem 5.2]{bible}) such that
\begin{enumerate}
\item $\bar{C}_{w}=C_{w}$
\item $C_{w}\in T_{w}+\bigoplus_{y\in W} \cA_{<0} T_{y}$
\end{enumerate}
For any $w\in W$ we set 
$$C_{w}=T_{w}+\sum_{y\in W } P_{y,w} T_{y}\quad \text{where $P_{y,w}\in \cA_{< 0}$}.$$
It is well known (\cite[\S 5.3]{bible}) that $P_{y,w}=0$ whenever $y\nleq w$  (here $\leq$ denotes the Bruhat order). It follows that $\{C_{w}|w\in W\}$ forms a basis of $\cH$ (the ``Kazhdan-Lusztig basis''). The coefficients $P_{y,w}$ are known as the Kazhdan-Lusztig polynomials. 
\begin{Rem}
Note that the element $C_{w}$ is denoted by $C'_{w}$ in \cite{KL1,Lus1p} and by $c_{w}$ in \cite{bible}. The Kazhdan-Lusztig polynomials are denoted by $P^{*}_{y,w}$ in \cite{Lus1p} and by $p_{y,w}$ in \cite{bible}. Originally, the Kazhdan-Lusztig polynomials were defined by $v^{\ell(w)-\ell(y)}P_{y,w}$ in  \cite[1.1.c]{KL1}.
\end{Rem}


\subsection{Kazhdan-Lusztig cells}
Since the elements $C_{w}$ ($w\in W$) form a basis of $\cH$ we can write, for all $x,y\in W$
$$C_{x}C_{y}=\sum_{z\in W} h_{x,y,z}C_{z}$$
where $h_{x,y,z}\in \cA$ and $\bar{h}_{x,y,z}=h_{x,y,z}$. \\
We write $z\leftarrow_{\cL} y$ if there exists $s\in S$ such that $h_{s,y,z}\neq 0$, that is $C_{z}$ appears with a non-zero coefficient in the expression of $C_{s}C_{y}$ in the Kazhdan-Lusztig basis. The Kazhdan-Lusztig left pre-order $\leq_{\cL}$ on $W$ is the transitive closure of this relation. 
The equivalence relation associated to $\leq_{\cL}$ will be denoted by $\sim_{\cL}$, that is
$$x\sim_{\cL}y \Longleftrightarrow x\leq_{\cL}y\text{ and } y\leq_{\cL} x\quad (x,y\in W).$$
The corresponding equivalence classes are called the left cells of $W$. Similarly, we define $\leq_{\cR}$, $\sim_{\cR}$ and right cells, multiplying on the right. We have (see \cite[\S 8]{bible})
$$x\leq_{\cL} y \Longleftrightarrow x^{-1}\leq_{\cR} y^{-1}.$$
\begin{Rem}
Let $x\in W$ and $s\in S$ be such that $sx>x$. Then, following \cite[Theorem 6.6]{bible}, we have $sx\leq_{\cL} x$. Similarly if $xs>x$ we have $xs\leq_{\cR} x$.
\end{Rem}
Finally we write $x\leq_{\cLR} y$ if there exists a sequence $x=x_{0},x_{1},...,x_{n}=y$ of $W$ such that for each $0\leq i\leq n-1$ we have either $x_{i}\leftarrow_{\cL} x_{i+1}$ or $x_{i}\leftarrow_{\cR} x_{i+1}$. The equivalence relation associated to $\leq_{\cLR}$ will be denoted by $\sim_{\cLR}$ and the equivalence classes are called the two-sided cells of $W$. \\
The pre-orders $\leq_{\cL},\leq_{\cR},\leq_{\cLR}$ induce a partial order on the left, right and two-sided cells, respectively.


\subsection{Parabolic subgroups}
Let $I\subset S$. We denote by $W_{I}$ the standard parabolic subgroup generated by $I$. If $W_{I}$ is finite, we denote by $w_{I}$ the longest element of $W_{I}$. We set
$$X_{I}:=\{x\in W| \ell(xs)=\ell(x)+1 \text{ for all $s\in I$}\}.$$
Then, by \cite[Proposition 2.1.1]{geckpfeiffer}, there exists a bijection
$$\begin{array}{cccc}
X_{I}\times W_{I}&\longrightarrow &W\\
(x,u)&\longmapsto &xu
\end{array}$$
such that $\ell(xu)=\ell(x)+\ell(u)$.


\subsection{Lusztig's $\ba$-function}
In the remainder of this section, {\it we assume that $L(s)>0$ for all $s\in S$}.
We say that $W$ is bounded if there exists $N\in \nN$ such that 
$$v^{-N}h_{x,y,z}\in \cA_{\leq 0}\quad\text{for all $x,y,z\in W$.}$$
Let $N=\max_{I} L(w_{I})$ where $I$ runs over all the subsets of $S$ such that $W_{I}$ is finite. Then Lusztig has conjectured that $N$ should be a bound for $W$. It is well known that $N$ is a bound for $W$ if $W$ is finite or if $W$ is an affine Weyl group \cite{bremke,bible}.  \\
From now on, we assume that $W$ is bounded, so that the next definition is valid. Let $z\in W$. Lusztig defined the following function
$$\ba(z)=\min\{n\in\nN\ |\ v^{-n}h_{x,y,z}\in \cA_{\leq 0}\text{ for all $x,y\in W$}\}.$$
We have (see \cite[Proposition 13.8]{bible})
\begin{Prop}
\label{afonc}
Assume that $W$ is finite and let $w_{0}$ be the longest element of $W$. We have
\begin{enumerate}
\item[(a)] $\ba(w_{0})=L(w_{0})$;
\item[(b)] $\ba(w)<L(w_{0})$ for all $w<w_{0}$.
\end{enumerate}
\end{Prop}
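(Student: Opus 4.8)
The plan is to reduce both parts to degree estimates in $v$ for the structure constants $h_{x,y,z}$, which I extract by expanding $C_{x}C_{y}$ in the standard basis. Two facts will be used repeatedly: that $W$ finite has bound $N=\max_{I}L(w_{I})=L(w_{0})$ (the maximum is attained at $I=S$, since $w_{I}\leq w_{0}$ and, because $\ell(ww_{0})=\ell(w_{0})-\ell(w)$ forces $L(w)<L(w_{0})$ for every $w<w_{0}$ when $L(s)>0$), and that $sw_{0}<w_{0}$ for all $s\in S$, so that $C_{s}C_{w_{0}}=(v^{L(s)}+v^{-L(s)})C_{w_{0}}$ by the multiplication rules. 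Since $C_{s}=T_{s}+v^{-L(s)}T_{e}$, the latter yields $T_{s}C_{w_{0}}=v^{L(s)}C_{w_{0}}$, hence $T_{w}C_{w_{0}}=v^{L(w)}C_{w_{0}}$ for every $w$ by multiplying along a reduced expression.

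For part (a), the bound gives $v^{-L(w_{0})}h_{x,y,z}\in\cA_{\leq 0}$ for all $x,y,z$, so $\ba(w_{0})\leq L(w_{0})$. For the reverse inequality I would take $x=y=w_{0}$. Writing $C_{w_{0}}=\sum_{w}P_{w,w_{0}}T_{w}$ and using $T_{w}C_{w_{0}}=v^{L(w)}C_{w_{0}}$ gives
\[
C_{w_{0}}C_{w_{0}}=\Big(\sum_{w}v^{L(w)}P_{w,w_{0}}\Big)C_{w_{0}},
\qquad\text{so}\qquad
h_{w_{0},w_{0},w_{0}}=\sum_{w}v^{L(w)}P_{w,w_{0}}.
\]
Because $P_{w,w_{0}}\in\cA_{\leq 0}$ with $P_{w_{0},w_{0}}=1$, the top-degree term is $v^{L(w_{0})}$; thus $v^{-n}h_{w_{0},w_{0},w_{0}}\notin\cA_{\leq 0}$ for every $n<L(w_{0})$, forcing $\ba(w_{0})\geq L(w_{0})$, and equality follows.

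For part (b) I would expand $C_{x}C_{y}=\sum_{w}g_{w}T_{w}$ and first prove the uniform bound $\deg_{v}g_{w}\leq\min(L(x),L(y))$. The key lemma is that the standard structure constants $f^{w}_{u,v}$ defined by $T_{u}T_{v}=\sum_{w}f^{w}_{u,v}T_{w}$ satisfy $\deg_{v}f^{w}_{u,v}\leq L(v)$ (and symmetrically $\leq L(u)$), which I would establish by induction on $\ell(v)$ using the quadratic relation, each length drop contributing at most $v^{L(s)}$. Combining this with $P_{u,x},P_{v,y}\in\cA_{\leq 0}$ and $L(v)\leq L(y)$ whenever $P_{v,y}\neq 0$ gives the claim. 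In particular $\deg_{v}g_{w}<L(w_{0})$ as soon as $(x,y)\neq(w_{0},w_{0})$. The single borderline case $x=y=w_{0}$ is handled by the computation above: there $g_{w}=\big(\sum_{w'}v^{2L(w')-L(w_{0})}\big)P_{w,w_{0}}$, which for $w<w_{0}$ has degree at most $L(w_{0})-1$ since $\deg_{v}P_{w,w_{0}}\leq -1$.

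Finally I would recover $h_{x,y,z}$ from the $g_{w}$ via the triangular relation $g_{w}=\sum_{z\geq w}h_{x,y,z}P_{w,z}$ (using $C_{z}=\sum_{w\leq z}P_{w,z}T_{w}$ with $P_{z,z}=1$), that is $h_{x,y,z}=g_{z}-\sum_{z'>z}h_{x,y,z'}P_{z,z'}$. For $z<w_{0}$ the term $g_{z}$ has degree at most $L(w_{0})-1$ by the previous step, while each $h_{x,y,z'}P_{z,z'}$ has degree at most $L(w_{0})+(-1)=L(w_{0})-1$, because $\deg_{v}h_{x,y,z'}\leq L(w_{0})$ by boundedness and $P_{z,z'}\in\cA_{<0}$ for $z<z'$. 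Hence $\deg_{v}h_{x,y,z}\leq L(w_{0})-1$ for all $x,y$, giving $\ba(z)<L(w_{0})$. I expect the main obstacle to be exactly the separation of the borderline case $x=y=w_{0}$ in the $g_{w}$-estimate (where $\min(L(x),L(y))=L(w_{0})$ is not by itself sufficient) together with the degree bookkeeping in the inversion step, where the strict inequality is produced entirely by the factor $P_{z,z'}\in\cA_{<0}$.
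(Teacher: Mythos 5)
The paper itself gives no proof of this proposition: it is quoted directly from Lusztig (see \cite[Proposition 13.8]{bible}), so there is no in-paper argument to compare against, and your proposal must stand on its own. It does: the route you take — the degree bound $\deg_{v}f^{w}_{u,v}\leq\min(L(u),L(v))$ for the $T$-basis structure constants, the eigenvector property $T_{w}C_{w_{0}}=v^{L(w)}C_{w_{0}}$ (derived from $C_{s}C_{w_{0}}=(v^{L(s)}+v^{-L(s)})C_{w_{0}}$, which is \cite[Theorem 6.6]{bible} and is invoked elsewhere in the paper), the strict inequality $L(w)<L(w_{0})$ for $w<w_{0}$ from positivity of $L$ on $S$, and the triangular inversion from the $g_{w}$ back to the $h_{x,y,z}$ in which the strict degree drop is produced by $P_{z,z'}\in\cA_{<0}$ — is essentially the standard argument of \cite[\S 13]{bible}, and each step checks out. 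In particular your part (a) is clean: $C_{w_{0}}C_{w_{0}}=h\,C_{w_{0}}$ with $h=\sum_{w}v^{L(w)}P_{w,w_{0}}$ of degree exactly $L(w_{0})$ (leading coefficient $1$, all other terms of degree $\leq L(w)-1<L(w_{0})$), so $\ba(w_{0})\geq L(w_{0})$, and boundedness by $N=L(w_{0})$, which the paper takes as known for finite $W$, gives equality.

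Two small repairs are needed, neither a genuine gap. First, in the borderline case you write $g_{w}=\bigl(\sum_{w'}v^{2L(w')-L(w_{0})}\bigr)P_{w,w_{0}}$; this silently substitutes the identity $P_{w',w_{0}}=v^{L(w')-L(w_{0})}$, which is true in the unequal-parameter setting (one checks $\sum_{y}v^{L(y)-L(w_{0})}T_{y}$ is bar-invariant and satisfies the defining conditions for $C_{w_{0}}$) but which you never proved. It is also unnecessary: your own computation gives $g_{w}=h\,P_{w,w_{0}}$ with $\deg_{v}h=L(w_{0})$, so $\deg_{v}g_{w}\leq L(w_{0})-1$ for $w<w_{0}$ exactly as your ``since'' clause asserts — indeed $h_{w_{0},w_{0},z}=0$ for $z<w_{0}$ outright, since $C_{w_{0}}C_{w_{0}}$ is a scalar multiple of $C_{w_{0}}$, making the borderline case of the inversion step trivial. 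State it that way and drop the unproved formula. Second, the step ``$L(v)\leq L(y)$ whenever $P_{v,y}\neq 0$'' uses monotonicity of $L$ along the Bruhat order; justify it by the subword property (a reduced word for $v$ is a subword of one for $y$, and every $L(s)>0$), which is the same positivity argument you already made at $w_{0}$. With these glosses the proof is complete and correct.
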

\noindent
Let $I\subset S$. We write $\ba_{I}:W_{I}\rightarrow \nN$ for the $\ba$-function defined in terms of $W_{I}$.


\subsection{Lusztig's conjectures}
\label{Lusconj}
Lusztig has formulated 15 conjectures (\cite[\S 14]{bible}), including the following
\begin{itemize}
\item[\bf P4.] If $z'\leq_{\cLR} z$ then $\ba(z')\geq \ba(z)$. Hence, if
$z'\sim_{\cLR} z$, then $\ba(z)=\ba(z')$.
\item[\bf P9.] If $z'\leq_{\cL} z$ and $\ba(z')=\ba(z)$ then $z'\sim_{\cL}z$.
\item[\bf P10.] If $z'\leq_{\cR} z$ and $\ba(z')=\ba(z)$ then $z'\sim_{\cR}z$.
\item[\bf P11.] If $z'\leq_{\cLR} z$ and $\ba(z')=\ba(z)$ then
$z'\sim_{\cLR}z$.
\item[\bf P12.] Let $I\subset S$. If $y\in W_{I}$ then we have $\ba_{I}(y)=\ba(y)$.
\item[\bf P14.] For any $z\in W$ we have $z \sim_{\cLR} z^{-1}$.
\end{itemize}
These conjectures are known to be true in the equal parameter case when $W$ is a Weyl group (finite or affine). The proof relies on the geometric interpretation of the Kazhdan-Lusztig basis \cite{Springer}.
\begin{Rem}
\label{propp}
(1) Using the fact that the map $x\mapsto x^{-1}$ sends left cells to right cells, one can see that  {\bf P9} is equivalent to  {\bf P10}.\\
(2) Conjectures {\bf P4}, {\bf P9} and {\bf P10} imply {\bf P11} (see \cite[\S 14.11]{bible}).\\
(3) If {\bf P4} and {\bf P9} hold then for all $x,y\in W$ we have 
$$({\bf P})\hspace{1cm} x\leq_{L} y \text{ and } x\sim_{LR} y \Longrightarrow x\sim_{L}y.$$
(4) Similarly, if {\bf P4} and {\bf P10} hold then we have for all $x,y\in W$
$$({\bf P'})\hspace{1cm} x\leq_{R} y \text{ and } x\sim_{LR} y \Longrightarrow x\sim_{R}y.$$
\end{Rem}



\subsection{The $\ba'$-function}
\label{aprime}
Let $w\in W$. Let $Z(w)$ to be the set of all $u\in W$ such that
\begin{enumerate}
\item there exist $x,y\in W$ such that $w=xuy$;
\item there exists $I\subset S$ such that $W_{I}$ is finite and $u\in W_{I}$;
\item $\ell(w)=\ell(x)+\ell(u)+\ell(y)$.
\end{enumerate}
Set $\ba'(w)=\max_{u\in W} \ba(u)$. Then Lusztig conjectured \cite[\S 13.12]{bible}
$$\begin{array}{cl}
({\bf C1})& \ba(w)=\ba'(w)  \\
\end{array}$$
for all $w\in W$. As a consequence, if one assumes that {\bf P11} holds, we obtain that any two-sided cell should meet some finite standard parabolic subgroup and hence, the number of two-sided cells should be finite.


\subsection{Connected sets and cells}
Let $K$ be a subset of $W$ and let $x,y\in K$. We say that $x,y$ are left-connected (respectively right-connected) in $K$ if there exists a sequence $x=x_{0},...,x_{n}=y$ in $K$ such that $x_{i}x_{i+1}^{-1}\in S$ (respectively $x_{i}^{-1}x_{i+1}\in S$) for all $0\leq i\leq n-1$. We say that $x,y$ are connected in $K$ if  there exists a sequence $x=x_{0},...,x_{n}=y$ in $K$ such that either $x_{i}x_{i+1}^{-1}\in S$ or $x_{i}^{-1}x_{i+1}\in S$ for all $0\leq i\leq n-1$. These are equivalence relations on $K$ hence we obtain a partition of $K$ into left-connected components, right-connected components and connected components. \\
A subset $K$ of $W$ is said to be left-connected if all $x,y\in K$ are left-connected in $K$. Similarly we define right-connected subsets of $W$ and connected subsets of $W$.

\begin{Lem}
\label{connectedcell}
Assume that {\bf P9}--{\bf P11} hold. Then we have
\begin{enumerate}
\item Let $K$ be a left-connected subset of $W$ such that the $\ba$-function is constant on $K$. Then $K$ is included in a left cell.
\item Let $K$ be a right-connected subset of $W$ such that the $\ba$-function is constant on $K$. Then $K$ is included in a right cell.
\item Let $K$ be a connected subset of $W$ such that the $\ba$-function is constant on $K$. Then $K$ is included in a two-sided cell.
\end{enumerate}
\end{Lem}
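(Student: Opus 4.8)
The plan is to prove (1) in full and then observe that (2) and (3) follow by the same argument, with $\leq_{\cL}$, $\sim_{\cL}$ and \textbf{P9} replaced respectively by $\leq_{\cR}$, $\sim_{\cR}$, \textbf{P10} and by $\leq_{\cLR}$, $\sim_{\cLR}$, \textbf{P11}. Since $\sim_{\cL}$ is an equivalence relation, it suffices to treat the case of two \emph{adjacent} elements in a left-connecting chain, i.e. $x,y\in K$ with $xy^{-1}\in S$; transitivity of $\sim_{\cL}$ then propagates the conclusion along any chain, and left-connectedness of $K$ yields $x\sim_{\cL}y$ for all $x,y\in K$, so that $K$ is contained in a single left cell.

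The first step is to turn adjacency into a $\leq_{\cL}$-comparison. Suppose $xy^{-1}=s\in S$, so that $x=sy$ and the two elements differ by left multiplication by $s$. If $\ell(x)>\ell(y)$, then $sy>y$, and the Remark following the definition of the cells gives $x=sy\leq_{\cL}y$; if instead $\ell(x)<\ell(y)$, then $y=sx$ with $sx>x$, and the same Remark gives $y=sx\leq_{\cL}x$. In either case $x$ and $y$ are comparable for $\leq_{\cL}$.

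It remains to invoke the hypothesis on the $\ba$-function. Since $x,y\in K$ and $\ba$ is constant on $K$, we have $\ba(x)=\ba(y)$. Feeding the comparison obtained above into \textbf{P9}---whether it is $x\leq_{\cL}y$ or $y\leq_{\cL}x$ that holds, the equality of the $\ba$-values forces the two elements into the same left cell---yields $x\sim_{\cL}y$. This settles the adjacent case, and hence (1).

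I do not expect a genuine obstacle here: the argument is essentially the observation that left-connectedness realises the one-step relations underlying $\leq_{\cL}$, so that neighbours in $K$ are automatically $\leq_{\cL}$-comparable, after which \textbf{P9} is exactly the statement needed to collapse comparability together with equal $\ba$-values into cell-equivalence. The only points requiring care are bookkeeping: keeping track of which of $x,y$ is the longer so as to apply the Remark in the correct direction, and noting that \textbf{P9} is insensitive to that direction since it requires only a $\leq_{\cL}$-relation together with equality of $\ba$. The right-handed and two-sided statements are verbatim translations of this argument using \textbf{P10} and \textbf{P11}.
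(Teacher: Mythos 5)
Your proof is correct and follows essentially the same route as the paper: for adjacent elements $x=sy$ in the chain, the Remark after the definition of cells gives a $\leq_{\cL}$-comparison in whichever direction the lengths dictate, and \textbf{P9} together with the constancy of $\ba$ upgrades this to $x\sim_{\cL}y$, with transitivity finishing (1) and \textbf{P10}, \textbf{P11} handling (2) and (3). The only point worth noting is that in (3) a connecting step may be a right multiplication, so one gets either a $\leq_{\cL}$- or a $\leq_{\cR}$-comparison, both of which imply $\leq_{\cLR}$ and feed into \textbf{P11} exactly as you indicate.
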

\begin{proof}
We prove (1). Let $x,y\in K$ and $x=x_{0},...,x_{n}=y$ be such that 
 $x_{i}x_{i+1}^{-1}\in S$ for all $0\leq i\leq n-1$. Let $0\leq i\leq n-1$ and set $s=x_{i}x_{i+1}
^{-1}\in S$. There is two cases to consider:
\begin{itemize}
\item $sx_{i}=x_{i+1}$ and $\ell(sx_{i})=\ell(x_{i})+1$;
\item $sx_{i+1}=x_{i}$ and $\ell(sx_{i+1})=\ell(x_{i+1})+1$.
\end{itemize} 
In the first case we have $x_{i+1}\leq_{\cL} x_{i}$ and since $\ba(x_{i})=\ba(x_{i+1})$ we obtain that $x_{i}\sim_{\cL} x_{i+1}$ (using {\bf P9}). The second case is similar. The proof of (2) and (3) are similar using {\bf P10} and {\bf P11}, respectively.
\end{proof}

In the case where $W$ is an irreducible Weyl group (finite or affine), Lusztig conjectured (\cite{conn})
$$\begin{array}{cl}
({\bf C2})& \text{The left cells of $W$ are left-connected.} 
\end{array}$$
Using the fact that the map $x\mapsto x^{-1}$ sends left cells to right cells, one can easily see that if the left cells are left-connected then the right cells are right-connected. 

\begin{Lem}
\label{lcells}
Assume that {\bf P} and {\bf C2} hold. Then the two-sided cells of $W$ are connected. Furthermore, the left cells lying in a given two-sided cell are the left-connected components.
\end{Lem}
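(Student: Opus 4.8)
The plan is to derive both statements from two pairs of facts. The first pair concerns pre-orders: inside a single two-sided cell, being comparable for $\leq_{\cL}$ forces two elements into the same left cell, which is exactly property $({\bf P})$, together with its right-handed analogue $({\bf P'})$ (available because $x\mapsto x^{-1}$ exchanges left and right cells, turning $({\bf P})$ into $({\bf P'})$). The second pair concerns connectedness: left cells are left-connected by $({\bf C2})$, and, by the same inverse map, right cells are right-connected. Throughout, fix a two-sided cell $\bc$; since $\sim_{\cL}$ refines $\sim_{\cLR}$, the cell $\bc$ is a disjoint union of left cells, and likewise of right cells.

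I would prove the second assertion first, as it is the cleaner one. One inclusion is immediate: by $({\bf C2})$ each left cell $L\subseteq\bc$ is left-connected, so it lies in a single left-connected component of $\bc$. For the converse, let $K$ be a left-connected component of $\bc$ and take $x,y\in K$ joined by $x=x_{0},\dots,x_{n}=y$ in $K$ with $x_{i}x_{i+1}^{-1}\in S$ at each step. Writing $s=x_{i}x_{i+1}^{-1}$ we have $x_{i+1}=sx_{i}$, and since $sw>w$ implies $sw\leq_{\cL}w$, the two elements $x_{i},x_{i+1}$ are comparable for $\leq_{\cL}$; as both lie in $\bc$ we have $x_{i}\sim_{\cLR}x_{i+1}$, so $({\bf P})$ gives $x_{i}\sim_{\cL}x_{i+1}$. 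Hence $x\sim_{\cL}y$, so $K$ lies in a single left cell. The two inclusions together show that each left cell of $\bc$ equals a left-connected component and that these exhaust $\bc$.

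For the first assertion, fix $x,y\in\bc$ and choose a sequence $x=x_{0},\dots,x_{n}=y$ realising $x\leq_{\cLR}y$, so that at each step $x_{i}\leftarrow_{\cL}x_{i+1}$ or $x_{i}\leftarrow_{\cR}x_{i+1}$, and in particular $x_{i}\leq_{\cLR}x_{i+1}$. The key point is that every intermediate term stays in $\bc$: the chain gives $x\leq_{\cLR}x_{i}\leq_{\cLR}y$, while $x\sim_{\cLR}y$ gives $y\leq_{\cLR}x$, and these squeeze $x_{i}$ between $\sim_{\cLR}$-equivalent elements, forcing $x_{i}\sim_{\cLR}x$. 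Consequently each single step joins two elements of $\bc$: a step $x_{i}\leftarrow_{\cL}x_{i+1}$ gives $x_{i}\leq_{\cL}x_{i+1}$ with $x_{i}\sim_{\cLR}x_{i+1}$, so $({\bf P})$ yields $x_{i}\sim_{\cL}x_{i+1}$, and symmetrically a $\leftarrow_{\cR}$ step yields $x_{i}\sim_{\cR}x_{i+1}$ via $({\bf P'})$.

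It then remains to turn each step into an actual connected path inside $\bc$. When $x_{i},x_{i+1}$ share a left cell, $({\bf C2})$ provides a path between them lying in that cell, hence in $\bc$, with consecutive terms differing by left multiplication by a generator; when they share a right cell, the right-handed form of $({\bf C2})$ provides such a path by right multiplication. Concatenating over $i$ connects $x$ to $y$ within $\bc$, so $\bc$ is connected. I expect the squeeze step to be the main obstacle: it is exactly what keeps the $\leq_{\cLR}$-chain inside $\bc$ and thereby licenses $({\bf P})$ and $({\bf P'})$ at every single step; a step leaving $\bc$ would be comparable for $\leq_{\cL}$ or $\leq_{\cR}$ without lying in one cell, and the argument would break. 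The remaining verification, that the paths supplied by $({\bf C2})$ stay in $\bc$, is routine once this point is in place.
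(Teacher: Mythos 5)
Your proof is correct and follows essentially the same route as the paper: use \textbf{P} (and \textbf{P'}) to upgrade each elementary step of a $\leq_{\cLR}$-chain to $\sim_{\cL}$ or $\sim_{\cR}$, then invoke \textbf{C2} (and its right-handed analogue) for connectedness, and for the second assertion show each left-connected component is squeezed into a single left cell via the comparability $x_{i}\leq_{\cL}x_{i+1}$ or $x_{i+1}\leq_{\cL}x_{i}$. The only difference is that you make explicit, via the squeeze $x\leq_{\cLR}x_{i}\leq_{\cLR}y\leq_{\cLR}x$, why the intermediate terms of the chain stay in the two-sided cell, a point the paper asserts without comment.
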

\begin{proof}
Let $C$ be a two-sided cell and let $x,y\in C$. There exists a sequence $x=x_{0},...,x_{n}=y$ in $C$ such that $x_{i}\leftarrow_{\cL} x_{i+1}$ or $x_{i}\leftarrow_{\cR} x_{i+1}$ for all $i$. Using {\bf P} (or its right version {\bf P'}) we get $x_{i}\sim_{\cL} x_{i+1}$ or $x_{i}\sim_{\cR} x_{i+1}$ for all $i$. It follows by {\bf C2} that two-sided cells are connected. \\
Next let $K$ be a left-connected component of $C$. Since $C$ is a union of left cells and left cells are left-connected it follows that $K$ has to be a union of left cells. Let $x,y\in K$. There exists a sequence $x=x_{0},...,x_{n}=y$ in $K$ such that 
 $x_{i}x_{i+1}^{-1}\in S$ for all $0\leq i\leq n-1$. Let $0\leq i\leq n-1$ and set $s=x_{i}x_{i+1}
^{-1}\in S$. Then we have either $sx_{i}=x_{i+1}$ and $\ell(sx_{i})=\ell(x_{i})+1$ or $sx_{i+1}=x_{i}$ and $\ell(sx_{i+1})=\ell(x_{i+1})+1$.
It follows that $x_{i}\leq_{\cL} x_{i+1}$ or $x_{i+1}\leq_{\cL} x_{i}$. Using {\bf P} we get $x_{i}\sim_{\cL} x_{i+1}$ and $K$ is included in a left cell. It follows that $K$ is a left cell.
\end{proof}


\subsection{Example: the dihedral groups}
\label{dihedral}
In this section, let $W$ be a Coxeter group of type $I_{2}(m)$ ($m\geq 2$ possibly $m=\infty$) with generators $s_{1},s_{2}$ such that $(s_{1}s_{2})^m=1$ (if $m<\infty$). If $m=\infty$ then $I_{2}(\infty)$ is the infinite dihedral group, which is also the affine Weyl group of type $\tA_{1}$. Let $L$ be a weight function on $W$. We set $L(s_{1})=a\in\nN$ and $L(s_{2})=b\in\nN$. We may assume without loss of generality that $a\geq b$. We have
\begin{Th}[du Cloux \cite{duC}, Geck \cite{rem}, Lusztig \cite{bible}]
Conjectures {\bf P1}--{\bf P15} hold for $W$ and $L$.
\end{Th}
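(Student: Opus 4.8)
The plan is to prove all fifteen of Lusztig's conjectures P1--P15 for the dihedral (including infinite dihedral) case by reducing to a completely explicit description of the Kazhdan--Lusztig basis, the structure constants $h_{x,y,z}$, and the $\ba$-function. The strategy is first to list the elements of $W$ explicitly: every $w\neq 1$ has a unique reduced expression as an alternating word in $s_{1},s_{2}$, so $W$ is indexed by pairs (length, starting generator), and the Bruhat order is totally transparent. I would then compute the $C_{w}$ directly. In the dihedral case one can write down closed formulas for the Kazhdan--Lusztig polynomials $P_{y,w}$ and hence for the products $C_{s_{i}}C_{w}$; these products involve at most two basis elements and the structure constants are explicit Laurent polynomials in $v$ whose leading terms one can read off. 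From this one obtains the left, right, and two-sided pre-orders and the cell partition by inspection, and one computes $\ba(w)$ for every $w$.

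With these explicit data in hand, I would verify the conjectures essentially by direct checking, organising them in the usual dependency order. The cells turn out to be: the singleton $\{1\}$ (with $\ba=0$), the singleton $\{w_{0}\}$ when $m<\infty$ (with $\ba=L(w_{0})$ by Proposition \ref{afonc}), and a small number of "staircase" cells made of the alternating words, whose $\ba$-value is constant and equal to $a$ or $b$ according to which generator has the larger weight. The key structural fact is that $\ba$ takes only the values $0$, $b$, $a$, and $L(w_{0})$ (the last only in the finite case), and that each value singles out a union of cells; once this is established, P4 (monotonicity of $\ba$ along $\leq_{\cLR}$) and the "equal $\ba$ forces equivalence" statements P9--P11 follow because the pre-order steps can only move within or strictly down this short list of values. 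Conjecture P12 is immediate since the proper finite parabolic subgroups are the trivial group and the two rank-one groups $\langle s_{i}\rangle$, for which $\ba$ is computed directly and agrees with the ambient value. Conjecture P14, that $w\sim_{\cLR}w^{-1}$, follows from the symmetry $x\leq_{\cL}y\Leftrightarrow x^{-1}\leq_{\cR}y^{-1}$ recorded in the excerpt together with the explicit cell list, since each two-sided cell is stable under inversion.

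The remaining conjectures P1, P2, P3, P5, P6, P7, P8, P13, P15 concern the integers $\ba(z)$ and $\De(z)$ (the degree data of the $C_{w}$), the structure of the ring $J$, and positivity/boundedness statements; in the dihedral case all of these reduce to the same explicit computation of the $h_{x,y,z}$ and of the constant terms and leading coefficients appearing in $C_{w}$, so I would verify them by exhibiting the distinguished involutions (here the cell representatives of minimal length) and checking the defining inequalities term by term. Throughout, boundedness of $W$ is guaranteed by the remark in the excerpt that $N=\max_{I}L(w_{I})$ is a bound for finite and affine Weyl groups, which covers $I_{2}(m)$ for all $m$ including $m=\infty$.

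The hard part will be the case analysis forced by the unequal parameters $a\geq b\geq 0$, and in particular the boundary behaviour when $a=b$ versus $a>b$ and when $m$ is finite versus infinite: the cell partition genuinely changes shape across these cases (for instance the element $w_{0}$ exists only for finite $m$, and the two "long" staircases merge or separate depending on whether $a=b$), so the explicit formulas for $C_{s_{i}}C_{w}$ and for $\ba$ must be tracked carefully through each regime. This is exactly the computation carried out by du Cloux, Geck, and Lusztig in the references cited, and the cleanest route is to quote their explicit Kazhdan--Lusztig polynomials and structure constants rather than to rederive them, then assemble the fifteen verifications from that data.
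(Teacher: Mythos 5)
The first thing to note is that the paper contains no proof of this theorem at all: it is stated with the attribution ``du Cloux, Geck, Lusztig'' and the verification is simply quoted from \cite{duC}, \cite{rem} and \cite{bible}. So your closing remark --- that the cleanest route is to quote the explicit Kazhdan--Lusztig polynomials and structure constants from those references rather than rederive them --- is in effect exactly what the paper does, and your overall plan (explicit reduced words, explicit $C_{w}$, explicit $h_{x,y,z}$, then a case-by-case verification of {\bf P1}--{\bf P15}) is the strategy those references carry out.

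However, your sketch contains a concrete structural error that would derail the verification as written. You claim that $\ba$ takes only the values $0$, $b$, $a$ and $L(w_{0})$, and you build the proofs of {\bf P4} and {\bf P9}--{\bf P11} on the shortness of this list. This is false for finite $m$ with $a>b$ (where necessarily $m$ is even --- a constraint your case analysis should record, since for odd $m$ the generators are conjugate and no weight function with $a\neq b$ exists). As the paper's Table 1 shows, the two-sided cells of $I_{2}(m)$, $a>b$, $m\geq 3$ even, are $\{1_{0}\}$, $\{2_{1}\}$, $W-\{1_{0},2_{1},1_{m-1},1_{m}\}$, $\{1_{m-1}\}$ and $\{1_{m}\}$, with $\ba$-values $0$, $b$, $a$, $\frac{m}{2}a-\frac{m}{2}b+b$ and $\frac{m}{2}(a+b)$ respectively; the fourth value equals $a+(\frac{m}{2}-1)(a-b)$ and so lies strictly between $a$ and $L(w_{0})$ as soon as $m\geq 4$. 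In other words $\{1_{m-1}\}$ is a singleton two-sided cell with an $\ba$-value outside your list, whereas your description would absorb $1_{m-1}$ into one of the ``staircase'' cells of value $a$; with your value list one would wrongly conclude $1_{m-1}\sim_{\cLR}1_{m-2}$, and the ``equal $\ba$ forces equivalence'' step of your argument for {\bf P9}--{\bf P11} fails precisely there (likewise the cell of value $b$ is the singleton $\{2_{1}\}$, not a staircase, when $a>b$ and $m<\infty$). The remedy is simply to carry out (or quote, as in \cite{rem}) the honest computation of the cells and of $\ba$, after which the rest of your plan --- {\bf P12} via the rank-one parabolics, {\bf P14} via inversion-stability of the corrected cell list, and the remaining conjectures via the explicit leading terms and distinguished involutions --- goes through.
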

For any $k\geq 0$ we set $1_{k}=s_{1}s_{2}...$ ($k$ factors) and $2_{k}=s_{2}s_{1}...$ ($k$ factors). Note that, if $m<\infty$ we have $1_{m}=2_{m}$. 
In the following table, we describe the partition of $W$ into two-sided cells, the left cells lying in a given two-sided cell and the values of the $\ba$-function. From this table, one can easily check that that {\bf C1} and {\bf C2} hold for dihedral groups.

\begin{table}[h!] \caption{Cells in the dihedral groups} 
{\small
\begin{center}
$
\renewcommand{\arraystretch}{1.4}
\begin{array}{|c|c|c|c|} \hline
W & \text{two-sided cells} & \text{ left cells }& \text{$\ba$-function}\\ \hline
 & \{1_{0}\} &  \{1_{0}\} & 0 \\
I_{2}(m), a=b & W-\{1_{0},1_{m}\}& \{1_{1},2_{2},...\},\{2_{1},1_{2},...\}& a \\
 & \{1_{m}\}& \{1_{m}\}& ma \\ \hline
 & \{1_{0}\} &  \{1_{0}\} & 0 \\
I_{2}(m), a>b & \{2_{1}\}& \{2_{1}\}& b \\
 m\geq 3, \text{$m$ even}& W-\{1_{0},2_{1},1_{m-1},1_{m}\}&\{1_{2},...,2_{m-1}\},\{1_{1},...,2_{m-2}\}& a \\
 & \{1_{m-1}\} & \{1_{m-1}\} & \frac{m}{2}a-\frac{m}{2}b+b\\
 & \{1_{m} \}&  \{1_{m}\} & \frac{m}{2}a+\frac{m}{2}b\\ \hline
& \{1_{0}\} &  \{1_{0}\} & 0 \\
I_{2}(2) & \{2_{1}\}& \{2_{1}\}& b \\
a\geq b& \{1_{1}\} &\{1_{1}\}& a \\
& \{1_{2}\} &  \{1_{2}\} & a+b \\ \hline
\tA_{1}& \{1_{0}\} &  \{1_{0}\} & 0 \\
 a=b & W-\{1_{0}\}& \{1_{1},2_{2},...\},\{2_{1},1_{2},...\}& a \\ \hline
 & \{1_{0}\} &  \{1_{0}\} & 0 \\
\tA_{1}, a>b  & \{2_{1}\}& \{2_{1}\}& b \\ 
& W-\{1_{0},2_{1}\}& \{1_{1},2_{2},...\},\{1_{2},2_{3}...\}& a \\ \hline
\end{array}
$
\end{center}}
\end{table}


\section{On the decomposition of an affine Weyl group into cells}
Let $W$ be an irreducible affine Weyl group together with a positive weight function $L$. In this section we present an algorithm for determining the partition of $W$ into cells from the knowledge of cells in all proper parabolic subgroups. \\
Throughout this section, we assume that {\bf P4}, {\bf P12} and {\bf C2} hold for all proper parabolic subgroups of $W$.

\subsection{The subset $\fC$ of $W$}
Let $\cJ$ be the set of all proper subsets of $S$. Note that since $W$ is an irreducible affine Weyl group, $W_{I}$ is finite for all $I\in \cJ$. We set
$$\fC:=\bigcup_{I\in \cJ} W_{I}.$$
Let $x,y\in \fC$; we write $x\sim_{\cL\cR,\fC} y$ if there exist a sequence $x=x_{0},...,x_{n}=y$ in $\fC$ and a sequence $I_{0},...,I_{n-1}$ in $\cJ$ such that
$$\text{$x_{k},x_{k+1}\in W_{I_{k}}$ and $x_{k}\sim_{\cLR}x_{k+1}$ in $W_{I_{k}}$}$$
for all $0\leq k\leq n-1$. This an equivalence relation and the equivalence classes will be called (for obvious reasons) the two-sided cells of $\fC$. We denote by $\cP_{\cL\cR,\fC}$ the associated partition of~$\fC$.\\
Let $w\in \fC$ and assume that there exists $I,J\in\cJ$ such that $x\in W_{I}\cap W_{J}=W_{I\cap J}$. Then, using {\bf P12} in $W_{I}$ and $W_{J}$, we obtain $\ba_{I}(x)=\ba_{I\cap J}(x)=\ba_{J}(x)$. Thus, we can define the following function for all $x\in \fC$
$$\ba_{\fC}(x)=\ba_{I}(x) \text{ if $x\in W_{I}$}.$$
\begin{Lem}
\label{cellC}
We have
\begin{enumerate}
\item the $\ba_{\fC}$-function is constant on the two-sided cells of $\fC$;
\item the two-sided cells of $\fC$ are connected.
\end{enumerate}
\end{Lem}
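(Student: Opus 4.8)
The plan is to prove both assertions by analysing how the relations $\sim_{\cLR}$ (inside parabolic subgroups) and $\sim_{\cL\cR,\fC}$ are built up, reducing everything to the single-step relation and then chaining.

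\medskip

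For part (1), I would proceed by a direct transitivity argument along the defining chain. Let $x,y\in\fC$ with $x\sim_{\cL\cR,\fC}y$, and take a witnessing sequence $x=x_{0},\dots,x_{n}=y$ together with subsets $I_{0},\dots,I_{n-1}\in\cJ$ such that $x_{k},x_{k+1}\in W_{I_{k}}$ and $x_{k}\sim_{\cLR}x_{k+1}$ inside $W_{I_{k}}$. The key point is that {\bf P4} holds in each $W_{I_{k}}$ (this is one of our standing hypotheses for proper parabolic subgroups), so $x_{k}\sim_{\cLR}x_{k+1}$ in $W_{I_{k}}$ forces $\ba_{I_{k}}(x_{k})=\ba_{I_{k}}(x_{k+1})$. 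By the very definition of $\ba_{\fC}$ as $\ba_{I}$ on $W_{I}$ (which is well defined by the {\bf P12} argument given just before the lemma), this reads $\ba_{\fC}(x_{k})=\ba_{\fC}(x_{k+1})$. Composing these equalities along the chain yields $\ba_{\fC}(x)=\ba_{\fC}(y)$, which is exactly the constancy claimed. So part (1) is essentially immediate once one unwinds the definitions and invokes {\bf P4} and {\bf P12} parabolic-subgroup-wise.

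\medskip

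For part (2), I would first reduce to a single step: it suffices to show that if $u,u'\in W_{I}$ for some $I\in\cJ$ and $u\sim_{\cLR}u'$ in $W_{I}$, then $u$ and $u'$ are connected in $\fC$; the general case then follows by concatenating the connecting sequences along the defining chain for $\sim_{\cL\cR,\fC}$. To handle this single step, the idea is to exploit the results already proved in the excerpt applied \emph{to the finite group $W_{I}$}: since {\bf P4}, {\bf P12} and {\bf C2} are assumed for all proper parabolic subgroups, Remark \ref{propp} gives that {\bf P} (and {\bf P'}) hold in $W_{I}$, and then Lemma \ref{lcells}, applied inside $W_{I}$, tells us that the two-sided cells of $W_{I}$ are connected. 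Thus $u$ and $u'$ lie in the same connected subset of $W_{I}$, i.e. there is a sequence inside $W_{I}$ joining them with successive quotients in $S$. Since $W_{I}\subseteq\fC$, this is a connecting sequence in $\fC$, and the single-step claim follows.

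\medskip

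The main obstacle I anticipate is verifying that the hypotheses of Lemma \ref{lcells} are genuinely available for $W_{I}$: that lemma requires {\bf P} and {\bf C2}, and I am only given {\bf P4}, {\bf P12} and {\bf C2} for proper parabolic subgroups. To bridge this I need {\bf P9} (equivalently {\bf P10}) in $W_{I}$ so that Remark \ref{propp}(3) delivers {\bf P}; if {\bf P9} is not among the standing hypotheses I would instead argue directly, using that $u\sim_{\cLR}u'$ in $W_{I}$ combined with {\bf C2} for $W_{I}$ suffices to produce an $S$-connecting path without the full strength of Lemma \ref{lcells}. A second, more bookkeeping-level subtlety is that $W_{I}\cap W_{J}=W_{I\cap J}$ and the coherence of $\ba_{\fC}$ across different parabolic subgroups containing a given element, but this is exactly the point already settled by the {\bf P12} computation immediately preceding the statement, so it can be cited rather than reproved.
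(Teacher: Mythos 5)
Your proof of (1) is exactly the paper's: unwind the defining chain, apply {\bf P4} inside each $W_{I_{k}}$, and use the {\bf P12}-coherence of $\ba_{\fC}$; nothing to add there. Your proof of (2) also follows the paper's route (reduce to a single step $u\sim_{\cLR}u'$ in $W_{I}$, invoke connectedness of the two-sided cells of the finite group $W_{I}$, concatenate), but as written it has two gaps. The first is concrete: the lemma asserts that each two-sided cell $b$ of $\fC$ is connected, and by the definition in Section 2.7 this means any two points of $b$ are joined by a sequence lying \emph{inside $b$}; you only produce a connecting sequence inside $\fC$, which is strictly weaker. The paper closes this by noting that the two-sided cell $C_{k}$ of $W_{I_{k}}$ containing $x_{k}$ and $x_{k+1}$ satisfies $C_{k}\subset b$ (every element of $C_{k}$ is $\sim_{\cLR}$-equivalent to $x_{k}$ inside $W_{I_{k}}$, hence $\sim_{\cL\cR,\fC}$-equivalent to it), so the path through $C_{k}$ automatically stays in $b$. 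The fix is one line, but it is a step your argument is missing.

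Second, the obstacle you flag about Lemma \ref{lcells} is real, but your proposed fallback does not go through. From $u\sim_{\cLR}u'$ in $W_{I}$ you get a chain whose steps are $x_{i}\leftarrow_{\cL}x_{i+1}$ or $x_{i}\leftarrow_{\cR}x_{i+1}$; {\bf C2} only acts on left (respectively right) \emph{cells}, so to convert such a step into a Cayley-graph path one must first upgrade it to $x_{i}\sim_{\cL}x_{i+1}$ (respectively $\sim_{\cR}$), and that upgrade is precisely {\bf P} (respectively {\bf P'}), i.e.\ it requires {\bf P9}/{\bf P10} via Remark \ref{propp}. So ``$\sim_{\cLR}$ plus {\bf C2} alone'' cannot produce the $S$-path, and connectedness of the two-sided cells of $W_{I}$ genuinely uses more than the three standing hypotheses. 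To be fair, the paper's own proof quietly asserts ``$C_{k}$ is connected'' at exactly this point; in the intended applications this is legitimate because all proper parabolic subgroups of the rank-$2$ affine Weyl groups are dihedral and {\bf P1}--{\bf P15} hold for them (Section \ref{dihedral}), whence Lemma \ref{lcells} applies inside each $W_{I}$. If you want the lemma under only {\bf P4}, {\bf P12}, {\bf C2} for parabolics, you must either add {\bf P9} for parabolics to the standing assumptions or assume connectedness of their two-sided cells outright; your write-up should say one of these rather than suggest {\bf P9} can be dispensed with.
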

\begin{proof}
Let $b$ be a two-sided cell of $\fC$ and let $x,y\in b$.  There exist a sequence $x=x_{0},...,x_{n}=y$ in $\fC$ and a sequence $I_{0},...,I_{n-1}$ in $\cJ$ such that
$$\text{$x_{k},x_{k+1}\in W_{I_{k}}$ and $x_{k}\sim_{\cLR}x_{k+1}$ in $W_{I_{k}}$}$$
for all $0\leq k\leq n-1$. Fix such a $k$. Using {\bf P4} in $W_{I_{k}}$ we get $\ba_{\fC}(x_{k})=\ba_{\fC}(x_{k+1})$ and (1) follows. 
Let $C_{k}$ be the two-sided cell of $W_{I_{k}}$ which contains $x_{k}$ and $x_{k+1}$. By definition of $b$ we have $C_{k}\subset b$. Since $C_{k}$ is connected we get that $x_{k}$ and $x_{k+1}$ are connected in $b$ and (2) follows. 
\end{proof}
Recall the definition of $Z(w)$ in Section \ref{aprime}. For $w\in W$ we set 
$$\ba'_{\fC}(w)=\max_{u\in Z(w)} \ba_{\fC}(u).$$ 
\begin{Rem}
Note that if {\bf P12} holds in $W$, then we have $\ba'_{\fC}(w)=\ba'(w)$. 
\end{Rem}

\subsection{On the partition of $W$ into cells}
For $i\in\nN$ we denote by $B_{i}$ the subset of $W$ which consists of all the elements $w$ such that $\ba'_{\fC}(w)=i$. According to the conjectures presented in the previous section, the following should hold.
\begin{conj}
\label{dec}
The two-sided cells of $W$ are the connected components of the sets $B_{i}$ for $i\in \nN$. Furthermore the left cells lying in a given two-sided cell are the left connected components.
\end{conj}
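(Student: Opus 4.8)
The plan is to show that \textbf{Conjecture \ref{dec}} is a formal consequence of Lusztig's conjectures \textbf{P4}, \textbf{P9}--\textbf{P12}, \textbf{C1} and \textbf{C2}, now assumed for $W$ itself and not merely for its proper parabolic subgroups. The starting point is to identify the sets $B_{i}$ with the level sets of the genuine $\ba$-function. Indeed, if \textbf{P12} holds in $W$ then the remark following the definition of $\ba'_{\fC}$ gives $\ba'_{\fC}(w)=\ba'(w)$ for all $w$, while \textbf{C1} gives $\ba'(w)=\ba(w)$. Hence, under \textbf{P12} and \textbf{C1},
$$B_{i}=\{w\in W\ |\ \ba(w)=i\}\quad (i\in\nN),$$
so that the $B_{i}$ are exactly the fibres of the $\ba$-function; in particular $\ba$ is constant on each $B_{i}$.

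With this identification in hand, I would prove the two required inclusions separately. First, let $K$ be a connected component of some $B_{i}$. Then $K$ is a connected subset of $W$ on which $\ba$ is constant, so by Lemma \ref{connectedcell}(3) (which uses \textbf{P9}--\textbf{P11}) the set $K$ is contained in a single two-sided cell. Conversely, let $C$ be a two-sided cell. By \textbf{P4} the $\ba$-function is constant on $C$, say equal to $i$, so $C\subseteq B_{i}$; and by Lemma \ref{lcells} (which uses \textbf{P} and \textbf{C2}) the cell $C$ is connected. Thus $C$ is a connected subset of $B_{i}$ and is therefore contained in some connected component $K$ of $B_{i}$.

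To conclude, I would combine the two inclusions. Given a two-sided cell $C$, choose as above a connected component $K\supseteq C$ of $B_{i}$; by the first step $K$ lies in a two-sided cell $C'$. Then $C\subseteq K\subseteq C'$, and since distinct two-sided cells are disjoint this forces $C=C'=K$. Consequently every two-sided cell is a connected component of some $B_{i}$ and, conversely, every connected component of every $B_{i}$ is a two-sided cell; this is the first assertion. For the second assertion, fix a two-sided cell $C$, now realised as a connected component of some $B_{i}$. Lemma \ref{lcells} states precisely that the left cells contained in $C$ are the left-connected components of $C$, which is exactly the claim about left cells.

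The one genuine obstacle is that this deduction is entirely conditional: it establishes \textbf{Conjecture \ref{dec}} only once \textbf{P4}, \textbf{P9}--\textbf{P12}, \textbf{C1} and \textbf{C2} are known to hold for $W$ itself, and these are open for general unequal parameters, which is precisely why the statement is phrased as a conjecture. The substantive work, carried out later for $W$ of type $\tB_{2}$ and $\tG_{2}$, is therefore not this combinatorial implication but the verification of the relevant Lusztig conjectures for these particular groups and \emph{all} choices of weight function---by the explicit methods of \cite{jeju1,jeju2,jeju3} together with computer calculations---after which the clean argument above upgrades the algorithmic partition $\{B_{i}\}$ into the genuine partition of $W$ into cells.
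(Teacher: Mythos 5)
Your argument is correct and coincides with the paper's own treatment: the Remark following Conjecture \ref{dec} makes the identical identification $B_{i}=\{w\in W\mid \ba(w)=i\}$ via {\bf P12} and {\bf C1}, and then deduces both assertions from Lemma \ref{connectedcell} and Lemma \ref{lcells} exactly as you do. You are also right that this deduction is purely conditional on Lusztig's conjectures holding in $W$ itself, and that the substantive content of Theorems \ref{decG2} and \ref{decB2} lies in the subsequent case-by-case verification rather than in this combinatorial implication.
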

\begin{Rem}
Assume that {\bf P4}, {\bf P9}--{\bf P11}, {\bf C1} and {\bf C2} hold in $W$. Then using {\bf P12}  and {\bf C1} we get
$$B_{i}=\{w\in W| \ba_{\fC}'(w)=i\}=\{w\in W| \ba'(w)=i\}=\{w\in W| \ba(w)=i\}.$$
It follows by {\bf P4} that $B_{i}$ is a union of two-sided cells. Now let $C$ be a connected component of $B_{i}$. By Lemma \ref{lcells}, we see that $C$ is also a union of two-sided cells. By Lemma \ref{connectedcell}, we get that $C$ is included in a two-sided cell. Hence it is a two-sided cell. The statement about the left cells follows from Lemma \ref{lcells}.
\end{Rem}
We now present an algorithm to compute the partition of $W$ into two-sided cells given in this conjecture. This algorithm uses a downward induction on the $\ba_{\fC}$-function.\\

\noindent
{\bf Step 1}: We determine the partition $\cP_{\cLR,\fC}$ of $\fC$. The $\ba_{\fC}$-function is constant on the parts of $\cP_{\cLR,\fC}$ (Lemma \ref{cellC}); we denote by $\ba_{\fC}(b)$ its value on $b\in\cP_{\cLR,\fC}$.\\

\noindent
{\bf Step 2}: Let $b_{0},\ldots,b_{m}$ be a numbering of the partition of $\cP_{\cLR,\fC}$ such that $\ba_{\fC}(b_{k})\geq \ba_{\fC}(b_{k+1})$ for all $0\leq k\leq m-1$. For $k=0,\ldots,m$ we define by induction, starting at $k=0$, the following subsets of $W$
$$\tb_{k}=\{w\in W\ |\ w=xuy,\ell(w)=\ell(x)+\ell(u)+\ell(y),\ x,y\in W, u\in b_{k}\}-\ind{\ba_{\fC}(b_{l})>\ba_{\fC}(b_{k})}{\bigcup_{l<k}} \tb_{l}.$$

\noindent
{\bf Step 3}: 
Let $x,y\in \fC$ and let $b_{i},b_{j}\in\cP_{\cLR,\fC}$ be such that $x\in b_{i}$ and $y\in b_{j}$. We write $x\sim_{\fC} y$ if $\tb_{i}\cap \tb_{j}\neq \emptyset$. We extend this relation by transitivity, we still denote it by $\sim_{\fC}$. It is an equivalence relation. We denote by $\cP_{\fC}$ the corresponding partition of $\fC$. Note that the $\ba_{\fC}$-function is constant on the parts of $\cP_{\fC}$ (see the proof of the claim below); we denote by $\ba_{\fC}(c)$ its value on $c\in\cP_{\fC}$.\\

\noindent
{\bf Step 4}: Let $c_{0},\ldots,c_{n}$ be a numbering of the partition $\cP_{\fC}$ such that $\ba_{\fC}(c_{k})\geq \ba_{\fC}(c_{k+1})$ for all $0\leq k\leq n-1$. For $k=0,\ldots,n$ we define by induction, starting at $k=0$, the following subsets of $W$
$$\tc_{k}:=\{w\in W\ |\ w=xuy,\ell(w)=\ell(x)+\ell(u)+\ell(y),\ x,y\in W, u\in c_{k}\}-\ind{\ba_{\fC}(c_{l})>\ba_{\fC}(c_{k})}{\bigcup_{l<k}} \tc_{l}.$$

\begin{Cl}
The sets $\tc_{0},\ldots,\tc_{n}$ constructed in the algorithm are the connected components of the sets $B_{i}$.
\end{Cl}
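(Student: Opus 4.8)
The plan is to first convert the recursive definitions of Steps~2 and~4 into closed forms, and then establish connectedness and separation by hand. For a subset $K\subseteq\fC$ write $\widehat{K}$ for the set of $w\in W$ admitting a reduced factorization $w=xuy$ (meaning $\ell(w)=\ell(x)+\ell(u)+\ell(y)$) with $x,y\in W$ and $u\in K$; thus the bracketed sets in Steps~2 and~4 are $\widehat{b_k}$ and $\widehat{c_k}$. Since $\ba_{\fC}$ is constant on each two-sided cell of $\fC$, one has $\ba'_{\fC}(w)=\max\{\ba_{\fC}(b):w\in\widehat{b}\}=\max\{\ba_{\fC}(c):w\in\widehat{c}\}$, the maxima running over the parts of $\cP_{\cLR,\fC}$ and of $\cP_{\fC}$. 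Ordering the parts by decreasing $\ba_{\fC}$ and running the downward induction, I would prove
$$\tb_k=\{w\in\widehat{b_k}:\ba'_{\fC}(w)=\ba_{\fC}(b_k)\}=\widehat{b_k}\cap B_{\ba_{\fC}(b_k)},\qquad \tc_k=\widehat{c_k}\cap B_{\ba_{\fC}(c_k)};$$
the only thing to check is that the element removed at stage $k$ is exactly one whose $\ba'_{\fC}$-value is attained on a part of strictly larger $\ba_{\fC}$, which the chosen ordering guarantees. Writing $i:=\ba_{\fC}(c_k)$ and using $c_k=\bigsqcup_{b\subseteq c_k}b$ with all such $b$ sharing the value $i$, this yields $\tc_k=\bigcup_{b\subseteq c_k}(\widehat{b}\cap B_i)$. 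Since $\sim_{\fC}$ merges precisely the cells whose pruned sets meet, the $\tc_k$ are pairwise disjoint and $B_i=\bigsqcup_{k:\ba_{\fC}(c_k)=i}\tc_k$; it then remains only to identify each $\tc_k$ with a connected component of $B_i$.

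Both of the remaining points rest on one observation about how $\ba'_{\fC}$ changes under a generator. Suppose $w\in B_i$, $s\in S$ with $sw<w$, and $w$ factors reducibly through a cell $b$ with $\ba_{\fC}(b)=i$; I claim $sw\in B_i$. Indeed $sw\in\widehat{b}$ gives $\ba'_{\fC}(sw)\ge i$, and if some $v\in Z(sw)$ had $\ba_{\fC}(v)>i$, write $sw=avc$ reduced; since $sw<w$, $s$ is not a left descent of $sw$, hence not of $a$ (as $avc$ is reduced), so $sa>a$ and $w=s(avc)=(sa)vc$ is reduced. Then $v\in Z(w)$ would force $\ba'_{\fC}(w)>i$, contradicting $w\in B_i$. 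The right-handed analogue holds as well. This gives the separation property at once: if $w,w'\in B_i$ are joined by an edge of the connectedness graph, write the shorter one as $w=xuy\in\tb\subseteq\tc_k$ and the longer one as $sw$ (or its right analogue); then $sw=(sx)uy$ is reduced, so $sw\in\widehat{b}\cap B_i=\tb\subseteq\tc_k$, and the edge does not cross between distinct parts.

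For connectedness I would argue in two stages. First fix a two-sided cell $b\subseteq c_k$ and set $\tb=\widehat{b}\cap B_i$. Given $w=xuy\in\tb$, peeling off the leftmost generator of $x$ and the rightmost generator of $y$ one at a time gives a path $w\to\cdots\to u$ whose every step is an edge; by the observation above each intermediate element again lies in $\widehat{b}\cap B_i=\tb$ (and the endpoint $u\in b$ lies in $B_i$ because {\bf P4} forces $\ba_{\fC}(v)\le\ba_{\fC}(u)=i$ for every factor $v$ of $u$), so $w$ is connected inside $\tb$ to its core $u$. As $b$ is connected in $\fC$ by Lemma~\ref{cellC}(2), and connectedness of elements of $\fC$ is the same computed in $\fC$ or in $W$, any two cores are joined by a path lying in $b\subseteq\tb$; hence $\tb$ is connected. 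Second, $\tc_k=\bigcup_{b\subseteq c_k}\tb$ is a union of connected sets whose intersection graph is connected --- this is exactly the transitive closure defining $\sim_{\fC}$ in Step~3 --- so $\tc_k$ is connected. Together with the separation property and the decomposition $B_i=\bigsqcup\tc_k$, this shows the $\tc_k$ are precisely the connected components of the $B_i$.

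The main obstacle is the connectedness of a single pruned cell $\tb$, and inside it the claim that peeling preserves membership in $B_i$: everything hinges on Lemma~\ref{cellC}(2) (hence on {\bf C2} for the proper parabolics) to move between cores, and on {\bf P4} to know the cores themselves lie in $B_i$. By contrast, the downward-induction bookkeeping and the disjointness of the $\tc_k$ are routine once the closed forms above are in place.
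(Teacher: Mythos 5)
Your proof is correct in substance and is essentially the paper's argument, made more explicit. The closed forms $\tb_{k}=\widehat{b_{k}}\cap B_{\ba_{\fC}(b_{k})}$ and $\tc_{k}=\widehat{c_{k}}\cap B_{\ba_{\fC}(c_{k})}$ are exactly what the paper uses implicitly (for instance when it asserts $B_{i}=\bigcup\tb_{k}$ and concludes ``since $sw\in B_{i}$, we get $sw\in\tb_{i}$''); your edge-orientation trick --- only ever pushing a reduced factorization \emph{up} from the shorter endpoint of an edge, which is legitimate once $B_{i}=\bigsqcup\tc_{k}$ is in place --- streamlines the paper's two-case analysis $sw>w$ versus $sw<w$ in Step 4; and your peeling argument fleshes out the paper's one-line claim that connectedness of $b_{k}$ implies connectedness of $\tb_{k}$, which indeed requires both your monotonicity statement (in effect $Z(sw)\subseteq Z(w)$ whenever $sw<w$) and the \textbf{P4} argument that the cores $u\in b$ themselves lie in $B_{i}$ --- two points the paper passes over in silence.

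One local error to repair: your ``one observation'' is false as stated, because the hypotheses $w\in\widehat{b}$, $sw<w$ do not yield $sw\in\widehat{b}$. Concretely, take $b=\{w_{I}\}$ for a proper subset $I$ (this is a two-sided cell of $\fC$ in the present setting, by the dihedral table), $w=w_{I}$ and $s\in I$: then $w\in B_{i}\cap\widehat{b}$ with $i=L(w_{I})$, but every $v\in Z(sw_{I})$ is a reduced factor of $sw_{I}$, hence lies in $W_{I}$ with $v<w_{I}$, so Proposition \ref{afonc}(b) gives $\ba'_{\fC}(sw_{I})<L(w_{I})$ and $sw_{I}\notin B_{i}$. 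The correct hypothesis is $sw\in\widehat{b}$ (equivalently, the deleted letter comes from the $x$- or $y$-part of the factorization, never from $u$); the only part that holds in general is the upper bound $\ba'_{\fC}(sw)\leq\ba'_{\fC}(w)$ for $sw<w$, which is precisely what your contradiction argument proves. Fortunately both of your applications satisfy the corrected hypothesis --- in the separation step you exhibit $sw=(sx)uy$ reduced explicitly, and in the peeling step the factorization through $u$ is carried along by construction --- so the whole proof goes through once the observation is restated with this hypothesis.
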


\begin{proof}
Let $\tb_{0},\ldots,\tb_{m}$ as defined in Step 2. Since $b_{k}$ is connected (Lemma \ref{cellC}) it follows that $\tb_{k}$ is connected for all $k$. Furthermore we have
$$B_{i}=\bigcup_{k, \ba_{\fC}(b_{k})=i} \tb_{k}.$$
Let $c\in \cP_{\fC}$. We have
\begin{enumerate}
\item if $x\sim_{\cL\cR,\fC} y$ then $x\sim_{\fC} y$;
\item if $x\sim_{\fC} y$ then $\ba_{\fC}(x)=\ba_{\fC}(y)$.
\end{enumerate}
Statement (1) is clear by definition. Let $x,y\in \fC$ be such that $x\sim_{\fC} y$. Let $b_{i},b_{j}\in\cP_{\cLR,\fC}$ be such that $x\in b_{i}$ and $y\in b_{j}$. Recall that 
$$\tb_{i}=\{w\in W\ |\ w=xuy,\ell(w)=\ell(x)+\ell(u)+\ell(y),\ x,y\in W, u\in b_{i}\}-\ind{\ba_{\fC}(b_{l})>\ba_{\fC}(b_{i})}{\bigcup_{l<i}} \tb_{l}.$$
Since we have $\tb_{i}\cap\tb_{j}\neq \emptyset$ it follows that $\tb_{j}$ does not appear in the union in the above formula thus we must have $\ba_{\fC}(b_{j})\leq \ba_{\fC}(b_{i})$. Similarly, we have $\ba_{\fC}(b_{i})\leq \ba_{\fC}(b_{j})$ and (2) follows.\\
Let $\tc_{0},\ldots,\tc_{n}$ as defined in Step 4. It is clear that $\tc_{k}$ is connected for all $k$ and that
$$B_{i}=\bigcup_{k, \ba_{\fC}(c_{k})=i} \tc_{k}.$$
Let $k$ be such that $\ba_{\fC}(c_{k})=i$. In order to show that $\tc_{k}$ is a connected component of $B_{i}$ it is enough to show the following statements.
\begin{enumerate}
\item Let $s\in S$ and $w\in \tc_{k}$ be such that $sw\in B_{i}$. Then $sw\in \tc_{k}$;
\item Let $s\in S$ and $w\in \tc_{k}$ be such that $ws\in B_{i}$. Then $ws\in \tc_{k}$.
\end{enumerate}
We prove (1). Let $b_{i},b_{j}\in\cP_{\cLR,\fC}$ be such that $w\in\tb_{i}$ and $sw\in\tb_{j}$. Note that $\ba_{\fC}(b_{i})=\ba_{\fC}(b_{j})$. Let
$u\in b_{i}$, $u'\in b_{j}$ and $x,y,x',y'\in W$ be such that
$$w=xuy \text{ and $\ell(w)=\ell(x)+\ell(u)+\ell(y)$},$$
$$ sw=x'u'y' \text{ and $\ell(sw)=\ell(x')+\ell(u')+\ell(y')$}.$$
Since $w\in \tc_{k}$ we have $b_{i}\subset c_{k}$ and $\tb_{i}\subset\tc_{k}$. First assume that $sw>w$. In this case we have $sw=sxuy$ and $\ell(w)=\ell(x)+\ell(u)+\ell(y)+1$. Hence, since $sw\in B_{i}$, we get $sw\in \tb_{i}\subset \tc_{k}$ and $sw\in\tc_{k}$ as desired.\\
Now assume that $sw<w$. In this case we have $w=sx'u'y'$ and $\ell(w)=\ell(x')+\ell(u')+\ell(y')+1$ and we get $w\in \tb_{j}$. It follows that $\tb_{i}\cap \tb_{j}\neq \emptyset$, $u\sim_{\fC} u'$ and $\tb_{j}\subset \tc_{k}$. Since $sw\in \tb_{j}$ we get $sw\in\tc_{k}$ as required. The proof of (2) is completely similar.
\end{proof}

\begin{Rem}
Two different weight functions can give rise to the same partition of $\fC$ into two-sided cells, however the sets $\tb_{i}$ can be different. Indeed, the definition of $\tb_{i}$ depends on the order induced by the values of the $\ba_{\fC}$-function on the two-sided cells of $\fC$.  
\end{Rem}

\subsection{Lowest two-sided cell}
A special feature of affine Weyl groups is the existence of a distinguished two-sided cell, the so-called lowest two-sided cell, which is minimal with respect to the partial order $\sim_{\cLR}$ (we refer to \cite{bremke,jeju2,Shil1,Shil2} for more information on this cell). The following proposition shows how it naturally appears in our algorithm.
\begin{Prop}
There exists a unique equivalence class $c_{0}$ with respect to $\sim_{\fC}$ with maximal $\ba$-value. Furthermore the set $\tc_{0}$ is the lowest two-sided cell of $W$.
\end{Prop}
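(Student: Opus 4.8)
The plan is to locate the maximal value of the $\ba_{\fC}$-function on $\fC$, make the relevant sets $\tb_{k}$ and $\tc_{0}$ explicit, and then identify $\tc_{0}$ with the lowest two-sided cell by appealing to its known, unconditional description.

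First I would determine where $\ba_{\fC}$ is largest. Put $N=\max_{I\in\cJ}L(w_{I})$. For $x\in W_{I}$ we have $\ba_{\fC}(x)=\ba_{I}(x)$, and Proposition \ref{afonc} gives $\ba_{I}(x)\le L(w_{I})\le N$, the first inequality being strict unless $x=w_{I}$. Hence $\ba_{\fC}(x)=N$ precisely when $x=w_{I}$ for some $I$ with $L(w_{I})=N$. Any such $I$ is maximal among proper subsets, since $I\subsetneq I'$ (with $I'\in\cJ$) would force $L(w_{I'})>L(w_{I})=N$ by length-additivity of the factorisation $w_{I'}=w_{I}d$; consequently $w_{I}$ lies in no $W_{I'}$ with $I'\in\cJ$, $I'\neq I$, and, being the longest element of $W_{I}$, its class in $\cP_{\cL\cR,\fC}$ is the singleton $\{w_{I}\}$. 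Since $\ba_{\fC}$ is constant on the parts of $\cP_{\fC}$ and bounded above by $N$, the $\sim_{\fC}$-classes of maximal $\ba$-value are exactly those meeting $\{w_{I}\mid L(w_{I})=N\}$.

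Next I would compute the associated sets. As no part of $\cP_{\cL\cR,\fC}$ has $\ba_{\fC}$-value exceeding $N$, the subtraction in the definition of $\tb_{k}$ is vacuous for every $b_{k}=\{w_{I}\}$ with $L(w_{I})=N$, so
$$\tb_{k}=\{w\in W\ |\ w=xw_{I}y,\ \ell(w)=\ell(x)+\ell(w_{I})+\ell(y)\},$$
and therefore $B_{N}=\bigcup_{I:\,L(w_{I})=N}\tb_{k}$ is exactly the set of $w\in W$ admitting a length-additive factorisation through a longest element $w_{I}$ of maximal weight.

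Finally I would identify $B_{N}$ with the lowest two-sided cell. The works on the lowest cell for affine Weyl groups with arbitrary parameters (\cite{bremke,jeju2,Shil1,Shil2}) establish, unconditionally, that this combinatorially defined set is a single two-sided cell, that it is minimal for $\leq_{\cLR}$, and that it is connected. By the Claim proved above, the sets $\tc_{k}$ with $\ba_{\fC}(c_{k})=N$ are precisely the connected components of $B_{N}$; since $B_{N}$ is connected there is exactly one of them, which forces a unique $\sim_{\fC}$-class $c_{0}$ of maximal $\ba$-value and gives $\tc_{0}=B_{N}$, the lowest two-sided cell. The crux of the argument is this last identification: that the union $\bigcup_{L(w_{I})=N}\tb_{k}$ is a single connected two-sided cell is invisible from the algorithm itself and rests on the geometry of the lowest cell from the cited works, rather than on the properties {\bf P} and {\bf C2}, which remain conjectural for $W$ itself.
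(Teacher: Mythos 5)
Your opening two paragraphs are correct and match the paper's first step, if in more detail: by Proposition \ref{afonc} (together with {\bf P12} for well-definedness of $\ba_{\fC}$), the maximal value $N$ of $\ba_{\fC}$ is attained exactly at the elements $w_{I}$ with $L(w_{I})=N$, each such $\{w_{I}\}$ is a singleton part of $\cP_{\cL\cR,\fC}$, the subtraction in the definition of the corresponding $\tb_{k}$ is vacuous, and $B_{N}$ is the set of $w$ admitting a length-additive factorisation $w=xw_{I}y$ with $I\in\bS$. The gap is in your final step, and it sits at exactly the point you yourself flag as the crux. You need $B_{N}$ to be connected in order to conclude, via the Claim, that there is a single $\sim_{\fC}$-class of maximal value, and you assert that the cited works \cite{bremke,jeju2,Shil1,Shil2} establish connectedness unconditionally. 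What those works establish is that $B_{N}$ is a single two-sided cell, minimal for $\leq_{\cLR}$; they do not prove that it is connected in the sense of Section 2.7 (chains of left or right multiplications by generators staying inside the set). And ``single two-sided cell'' does not imply ``connected'': in this paper that implication is precisely what {\bf P} and {\bf C2} buy (Lemma \ref{lcells}), and these remain conjectural for $W$ itself. So your argument substitutes an unsubstantiated citation for the one genuinely new fact the Proposition requires.

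The irony is that the missing fact has a three-line unconditional proof, which is what the paper's own argument consists of. Each $\tb_{k}$ with $b_{k}=\{w_{I}\}$, $I\in\bS$, is connected and contains $w_{I}$, so it suffices to show $\tb_{I}\cap\tb_{J}\neq\emptyset$ for all $I,J\in\bS$: write $w_{J}=xu$ with $x\in X_{I}$, $u\in W_{I}$, and set $v=u^{-1}w_{I}$, so that $\ell(v)=\ell(w_{I})-\ell(u)$; then $w_{J}v=xw_{I}$ with
$$\ell(w_{J}v)=\ell(w_{J})+\ell(v)\quad\text{and}\quad \ell(xw_{I})=\ell(x)+\ell(w_{I}),$$
so this one element factors length-additively through both $w_{J}$ and $w_{I}$ and lies in $\tb_{I}\cap\tb_{J}$ (no subtraction occurs at the maximal $\ba$-value). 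This gives $w_{I}\sim_{\fC}w_{J}$ directly, hence the uniqueness of $c_{0}$ and the connectedness of $B_{N}$ you wanted. With this inserted, the rest of your argument is fine and coincides with the paper's: the identification of $\tc_{0}=B_{N}$ with the lowest two-sided cell is outsourced to \cite[\S 5]{bremke} (see also \cite{jeju2}) in the paper as well.
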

\begin{proof}
Let $\tilde{\nu}=\max_{I\subsetneq S} L(w_{I})$ and let $\bS$ be the set which consists of all the proper subsets $I$ of $S$ such that $L(w_{I})=\tilde{\nu}$. We set 
\begin{equation*}
c_{0}=\{w_{I}| I\in \bS\}. \tag{$\ast$}
\end{equation*}
First of all, using Proposition \ref{afonc}, it is clear that $\tilde{\nu}$ is the maximal value of the $\ba_{\fC}$-function on the equivalence classes of $\fC$ with respect to $\sim_{\fC}$. It follows that $c_{0}$ has to be a union of equivalence classes. Thus in order to prove the proposition, it is enough to show that $c_{0}$ is included in an equivalence class, that is, we have $w_{I}\sim_{\fC} w_{J}$ for all $I,J\in \bS$. Let $I,J\in\bS$. There exists a unique $x\in X_{I}$ and $u\in W_{I}$ such that $w_{J}=xu$ and $\ell(w_{J})=\ell(x)+\ell(u)$. Let $v=u^{-1}w_{I}$. Note that $\ell(v)=\ell(w_{I})-\ell(u)$. Then we have $w_{J}v=xuv=xw_{I}$ and 
$$\ell(w_{J}v)=\ell(w_{J})+\ell(v)\text{ and } \ell(xw_{I})=\ell(x)+\ell(w_{I}).$$
It follows that $w_{I}\sim_{\fC} w_{J}$ as desired. Now we have
$$\tc_{0}=\{w\in W| w=xw_{I}y,\ell(w)=\ell(x)+\ell(w_{I})+\ell(y), x,y\in W, I\in\bS\}$$
and $\tc_{0}$ is the  lowest two-sided cell of $W$ (see \cite[\S 5]{bremke}).
\end{proof}


\section{Semicontinuity properties of Kazhdan-Lusztig cells}
\label{semi}
Let $(W,S)$ be an arbitrary Coxeter group. Bonnaf\'e has conjectured that the Kazhdan-Lusztig cells should satisfy some semicontinuity properties when the parameters are varying  \cite{semi}. In this section we describe this conjecture. We start by collecting some results when some of the parameters are non-positive.

\subsection{Changing signs}
Assume that $S=I\cup J$ where no element of $I$ is conjugate to an element of $J$ in $W$. Let $L$ be a weight function on $W$. Let $L'$ be the weight function defined by
$$L'(s)=
\begin{cases}
L(s) & \text{if $s\in I$}\\
-L(s) & \text{if $s\in J$}
\end{cases}
$$
Then we have (\cite[\S 2.D]{semi})
\begin{Prop}
The partition of $W$ into cells with respect to $L$ coincides with the partition of $W$ into cells with respect to $L'$. 
\end{Prop}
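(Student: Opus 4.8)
The plan is to construct an explicit algebra isomorphism between the two Hecke algebras which carries the Kazhdan--Lusztig basis to itself up to signs, and then to read off the equality of cells from the observation that the structure constants $h_{x,y,z}$ change only by a sign. Write $\cH$ (resp. $\cH'$) for the Iwahori--Hecke algebra attached to $L$ (resp. $L'$), with standard bases $\{T_{w}\}$ and $\{T'_{w}\}$.

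First I would establish the one genuinely structural ingredient: the quantity $\ell_{J}(w)$ counting the number of letters from $J$ in a reduced expression of $w$ is well defined. By Matsumoto's theorem it suffices to check invariance under a single braid move between two generators $s,t$; within $I$ or within $J$ this is immediate, so the only issue is a move with $s\in I$ and $t\in J$. Here the hypothesis that no element of $I$ is conjugate to an element of $J$ is exactly what is needed: two generators joined by an edge of odd label are conjugate, so $m_{st}$ must be even (or infinite), and an even braid relation $\underbrace{sts\cdots}_{m_{st}}=\underbrace{tst\cdots}_{m_{st}}$ contains the same number of $t$'s on both sides. Hence $\ell_{J}$ is well defined, and I set $\eps_{w}=(-1)^{\ell_{J}(w)}$, noting at once that $\eps_{w^{-1}}=\eps_{w}$ and $\eps_{sw}=\eps_{s}\eps_{w}$.

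Next I would define the $\cA$-linear map $\phi\colon\cH\to\cH'$ by $\phi(T_{w})=\eps_{w}T'_{w}$ and check it is an algebra isomorphism. Bijectivity is clear, and multiplicativity reduces to $\phi(T_{s}T_{w})=\phi(T_{s})\phi(T_{w})$ in the two cases of the multiplication rule: the length-additive case follows from $\eps_{sw}=\eps_{s}\eps_{w}$, while in the descent case the key point is the identity $v^{L(s)}-v^{-L(s)}=\eps_{s}(v^{L'(s)}-v^{-L'(s)})$, which holds separately for $s\in I$ (where $\eps_{s}=1$ and $L'=L$) and for $s\in J$ (where $\eps_{s}=-1$ and $L'=-L$). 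I would then show $\phi$ commutes with the bar involutions, i.e. $\phi(\bar{h})=\overline{\phi(h)}$ for all $h\in\cH$: both sides are additive, $\cA$-semilinear (sending $v\mapsto v^{-1}$) ring maps, so they agree as soon as they agree on each $T_{s}$, and this comparison uses $\bar{T}_{s}=T_{s}^{-1}=T_{s}-(v^{L(s)}-v^{-L(s)})$ together with the same parameter identity as above.

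Finally I would transport the basis. Applying $\phi$ to the two defining conditions of $C_{w}$ and multiplying by $\eps_{w}$, the element $\eps_{w}\phi(C_{w})$ is bar-invariant in $\cH'$ and lies in $T'_{w}+\bigoplus_{y}\cA_{<0}T'_{y}$ (since $\eps_{w}\eps_{y}P_{y,w}\in\cA_{<0}$), so uniqueness forces $\phi(C_{w})=\eps_{w}C'_{w}$. Feeding this into $C_{x}C_{y}=\sum_{z}h_{x,y,z}C_{z}$ and using $\cA$-linearity yields $h'_{x,y,z}=\eps_{x}\eps_{y}\eps_{z}\,h_{x,y,z}$, and in particular $h_{x,y,z}\neq 0$ if and only if $h'_{x,y,z}\neq 0$. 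Since the relations $\leftarrow_{\cL}$ and $\leftarrow_{\cR}$, and hence the pre-orders $\leq_{\cL},\leq_{\cR},\leq_{\cLR}$, depend only on which structure constants vanish, the two partitions of $W$ into left, right, and two-sided cells coincide. I expect the well-definedness of $\ell_{J}$ --- that is, pinpointing exactly where the non-conjugacy hypothesis enters --- to be the main (and essentially the only conceptual) obstacle; everything afterward is a routine propagation of the sign $\eps_{w}$ through the standard characterizations of $C_{w}$ and of the cell relations.
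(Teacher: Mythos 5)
Your proof is correct and takes essentially the same route as the paper's source for this statement (the paper gives no proof of its own, deferring to Bonnaf\'e \cite[\S 2.D]{semi}): the sign-twisting isomorphism $T_{w}\mapsto(-1)^{\ell_{J}(w)}T'_{w}$, available exactly because $m_{st}$ is even or infinite whenever $s\in I$ and $t\in J$, commutes with the bar involutions and sends $C_{w}$ to $\eps_{w}C'_{w}$, giving $h'_{x,y,z}=\eps_{x}\eps_{y}\eps_{z}\,h_{x,y,z}$ and hence identical cell relations. All the intermediate steps you flag (well-definedness of $\ell_{J}$ via Matsumoto, the identity $v^{L(s)}-v^{-L(s)}=\eps_{s}(v^{L'(s)}-v^{-L'(s)})$, and the uniqueness argument identifying $\eps_{w}\phi(C_{w})$ with $C'_{w}$) check out.
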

Thus the study of cells can be reduced to the case where the parameters are non-negative integers.

\subsection{Parameters equal to zero}
Once again assume that $S=I\cup J$ where no element of $I$ is conjugate to an element of $J$. We set 
$$\tJ:=\{wtw^{-1}| w\in W_{I},t\in J\}.$$
We denote by $\tW$ the group generated by $\tJ$. Then $(\tW,\tJ)$ is a Coxeter group \cite{Bon-Dyer,Gal}.  If $\tT\in \tJ$ we denote by $\nu(\tT)$ the unique element of $J$ such that $\tT$ is $W_{I}$-conjugate to $\nu(\tT)$. Let $L$ be a weight function on $W$ such that $L(s)=0$ for all $s\in I$. Finally, we set 
$$\tL(\tT)=L(\nu(\tT)).$$
It can be shown that if $\tT$ and $\tT'$ are conjugate in $\tW$ then $\tL(\tT)=\tL(\tT')$. It follows that $\tL$ is a weight function on $\tW$. We have (see \cite[2.E]{semi})
\begin{Prop}
The left (respectively right, respectively two-sided) cells of $W$ with respect to $L$ are of the form $W_{I}.C$ (respectively $C.W_{I}$, respectively $W_{I}.C.W_{I}$) where $C$ is a left (respectively right, respectively two-sided) cell of $\tW$ with respect to $\tL$.
\end{Prop}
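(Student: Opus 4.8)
The plan is to trivialise the role of $W_{I}$ inside the Hecke algebra and reduce the whole cell theory of $(\cH,L)$ to that of $(\tcH,\tL)$, via an explicit embedding $\iota\colon\tcH\hookrightarrow\cH$ compatible with the decomposition $W=\tW\rtimes W_{I}$. First I would record the degeneration forced by $L(s)=0$ for $s\in I$: the factor $v^{L(s)}-v^{-L(s)}$ vanishes, so the quadratic relation collapses to $T_{s}^{2}=1$. Hence $\{T_{u}\mid u\in W_{I}\}$ spans a copy of the group algebra $\cA[W_{I}]$ inside $\cH$, each $T_{u}$ is a unit, and $\ov{T_{u}}=T_{u^{-1}}^{-1}=T_{u}$. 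Bar-invariance together with the triangularity property $C_{w}\in T_{w}+\bigoplus_{y}\cA_{<0}T_{y}$ then forces $C_{u}=T_{u}$ for every $u\in W_{I}$; in particular $C_{s}=T_{s}$ for $s\in I$. On the group side, $W_{I}$ normalises $\tW$ because conjugation permutes $\tJ$, one has $W=\tW W_{I}$, and the hypothesis that no element of $I$ is conjugate to an element of $J$ gives $\tW\cap W_{I}=1$; thus $W=\tW\rtimes W_{I}$ and every $w\in W$ factors uniquely as $w=un$ with $u\in W_{I}$ and $n\in\tW$.

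Next I would build the embedding. On generators set $\iota(\tilde{T}_{r})=T_{w}T_{t}T_{w}^{-1}$ for $r=wtw^{-1}\in\tJ$ (with $w\in W_{I}$, $t\in J$); this makes sense since $T_{w}$ is a unit, and by the Bonnaf\'e--Dyer/Gal description of $(\tW,\tJ)$ as a Coxeter system it is independent of the chosen expression for $r$ and respects the quadratic and braid relations, so $\iota$ extends to an $\cA$-algebra embedding. The decisive point is that $\iota$ intertwines the bar involutions. Using $\ov{T_{w}}=T_{w}$, the fact that $\ov{\,\cdot\,}$ is a ring homomorphism, and the identity $\tL(r)=L(\nu(r))=L(t)$, one checks on a generator that
$$\ov{\iota(\tilde{T}_{r})}=T_{w}\,\ov{T_{t}}\,T_{w}^{-1}=\iota(\tilde{T}_{r})-(v^{\tL(r)}-v^{-\tL(r)})=\iota(\ov{\tilde{T}_{r}}).$$
Since both involutions are ring homomorphisms and $\iota$ is an algebra map, $\ov{\iota(x)}=\iota(\ov{x})$ for all $x\in\tcH$; in particular $\iota(\tilde{C}_{n})$ is bar-invariant for every $n\in\tW$.

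The heart of the argument is then the identity
$$C_{un}=T_{u}\,\iota(\tilde{C}_{n})\qquad(u\in W_{I},\ n\in\tW).$$
The right-hand side is bar-invariant, being a product of the bar-invariant elements $T_{u}$ and $\iota(\tilde{C}_{n})$. By uniqueness of the Kazhdan--Lusztig basis it therefore suffices to verify the triangularity $T_{u}\,\iota(\tilde{C}_{n})\in T_{un}+\bigoplus_{y}\cA_{<0}T_{y}$, after which the displayed identity follows. Granting this, the transfer of cells is formal: since $C_{u}=T_{u}$ is a unit, left multiplication by $C_{u'}$ ($u'\in W_{I}$) moves $un$ within the single coset $W_{I}\cdot\{n\}$, so $W_{I}\cdot\{n\}$ is contained in one left cell; the remaining part of the preorder $\leq_{\cL}$ is controlled by left multiplication by the elements $\iota(\tilde{C}_{r})$, which under the identity corresponds exactly to $\leq_{\cL}$ computed in $(\tcH,\tL)$. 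Hence the left cells of $W$ are precisely the sets $W_{I}\cdot C$ with $C$ a left cell of $\tW$; the right and two-sided statements follow by the symmetric argument (multiplying $T_{u}$ on the right, respectively on both sides), giving $C\cdot W_{I}$ and $W_{I}\cdot C\cdot W_{I}$.

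I expect the triangularity in the key identity to be the main obstacle. Controlling the leading term of $T_{u}\,\iota(\tilde{C}_{n})$ amounts to controlling the expansion of $\iota(\tilde{T}_{n})$ in the standard basis $\{T_{y}\}_{y\in W}$, and this is delicate because the elements of $\tW$ are in general \emph{not} minimal-length representatives of the cosets $W/W_{I}$: one does not have $\ell_{W}(un)=\ell(u)+\ell_{W}(n)$, so the appearance of $T_{un}$ with coefficient $1$ and all other terms in $\cA_{<0}$ is not automatic. Overcoming this requires the Bonnaf\'e--Dyer length dictionary relating $\ell_{W}$ restricted to $\tW$ to the word length of $(\tW,\tJ)$, together with a careful induction on that length; everything else in the proof is bookkeeping once this compatibility is in place.
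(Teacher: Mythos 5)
Your outline is, in substance, a reconstruction of the argument the paper itself does not spell out: the proposition is quoted from Bonnaf\'e \cite[\S 2.E]{semi}, and the route there is exactly yours (the degenerate relations $T_{s}^{2}=1$ for $s\in I$, hence $C_{u}=T_{u}$ for $u\in W_{I}$; the decomposition $W=\tW\rtimes W_{I}$ from \cite{Bon-Dyer,Gal}; an embedding $\tcH\hookrightarrow\cH$ commuting with the bar involution; the identity $C_{un}=T_{u}\,\iota(\tilde{C}_{n})$; a formal transfer of the preorders). So the strategy is the right one, and your peripheral computations (bar-invariance of $T_{u}$, the generator check for bar-compatibility, $\tL(u^{-1}tu)=L(t)$) are correct. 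But as a proof the proposal has a genuine gap, which you partly flag yourself: both the existence of $\iota$ and the key identity are asserted, not proved. Well-definedness of $\iota$ is not about ``independence of the chosen expression for $r$'' --- that part is trivial, since for $s\in I$ the correction term $(v^{L(s)}-v^{-L(s)})$ vanishes, so $T_{u}T_{x}T_{u'}=T_{uxu'}$ for all $u,u'\in W_{I}$, $x\in W$, and hence $\iota(\tilde{T}_{r})=T_{r}$ --- it is the braid relations among the elements $T_{r}$, $r\in\tJ$, inside $\cH$, and nothing in the bare statement that $(\tW,\tJ)$ is a Coxeter system gives you those. Everything reduces to one missing lemma, the descent dictionary: for $t\in J$ and $n\in\tW$ one has $\ell(tn)>\ell(n)$ in $W$ if and only if $\tilde{\ell}(tn)>\tilde{\ell}(n)$ in $(\tW,\tJ)$. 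Granting it, writing $r=wtw^{-1}$ and conjugating by the unit $T_{w}$ gives $T_{r}T_{n}=T_{rn}$ or $T_{rn}+(v^{\tL(r)}-v^{-\tL(r)})T_{n}$ according to $\tilde{\ell}$, so $\bigoplus_{n\in\tW}\cA T_{n}$ is a subalgebra of $\cH$ satisfying exactly the defining relations of $\tcH$, i.e.\ $\iota(\tilde{T}_{n})=T_{n}$ for all $n\in\tW$.

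Once that lemma is in place, the step you single out as the main obstacle evaporates, and your worry about $\ell(un)\neq\ell(u)+\ell(n)$ is a red herring: since $T_{u}T_{x}=T_{ux}$ exactly, with no correction terms, one gets $T_{u}\,\iota(\tilde{C}_{n})=T_{un}+\sum_{m\in\tW}\tilde{P}_{m,n}T_{um}$ with $\tilde{P}_{m,n}\in\cA_{<0}$, which is triangular on the nose, and uniqueness of the Kazhdan--Lusztig basis yields $C_{un}=T_{u}\,\iota(\tilde{C}_{n})$. The transfer of cells then works as you sketch, with one point to make explicit: for $t\in J$ one has $C_{t}C_{un}=T_{u}\,\iota(\tilde{C}_{u^{-1}tu}\tilde{C}_{n})$, so for fixed $u$ left multiplication by the $C_{t}$ only realizes the generators $u^{-1}Ju\subset\tJ$; you must first use $C_{s}C_{un}=C_{sun}$ ($s\in I$) to move freely within the coset $W_{I}n$, and only then does the full $\tJ$-generated preorder of $(\tW,\tJ,\tL)$ appear. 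In short: correct route, correct bookkeeping, but the load-bearing lemma --- the Bonnaf\'e--Dyer length/descent dictionary, and with it the multiplicativity of $\iota$ --- is named rather than proved, so the argument is incomplete precisely where the real content of the proposition lies.
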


\subsection{Hyperplane arrangements}
Following \cite{Bourbaki} we now introduce the notion of facets and chambers. Let $V$ be an Euclidean space of dimension $m$. Let $\fH$ be a finite set of hyperplanes in $V$. Each hyperplane $H$ of $V$ defines two half-spaces, namely the connected component of $V-H$. We say that $\la,\mu$ lie on the same side of $H$ if they lie in the same connected component of $V-H$. We define the following equivalence relation on $V$: for $\la,\mu\in V$, we write $\la\sim_{\fH} \mu$ if for all $H\in\fH$ we have either
\begin{enumerate}
\item $\la,\mu\in H$;
\item $\la,\mu$ lie on the same side of $H$.
\end{enumerate}
The equivalence classes associated to this relation are called $\fH$-facets. A $\fH$-chamber is a $\fH$-facet $\cF$ such that no point of $\cF$ lies on a hyperplane $H\in\fH$. Let $\cF$ be a $\fH$-facet, we denote by $C_{\cF}$ the set of all $\fH$-chambers $C$ such that $\cF\subset \bar{C}$ (where $\bar{C}$ denotes the closure of $C$).

\subsection{Semicontinuity conjecture}
Let $\bar{S}=\{\bar{v}_{1},...,\bar{v}_{m}\}$ be the set of conjugacy classes in $S$. Any weight function on $W$ is completely determined by its values on $\bar{S}$. Let $V$ be an Euclidean space of dimension $m$ with standard basis $v_{1},...,v_{m}$.  We identify the set of weight functions on $W$ with the set of points in $V$ with integer coordinates via
$$L\longrightarrow (L(s_{1}),...,L(s_{m}))\in V$$
where $s_{i}\in \bar{v}_{i}$ for all $i$. \\
Let $v=(n_{1},...,n_{m})\in V$ ($v\neq 0$). We denote by $H_{v}$ (or $H_{(n_{1},...,n_{m})}$) the hyperplane of $V$ orthogonal to $v$, that is the hyperplane defined by
$$\{x=(x_{1},...,x_{n})\in V|\ n_{1}x_{1}+...+n_{m}x_{m}=0\}.$$
Let $\fH$ be a hyperplane arrangement. We say that 
\begin{enumerate}
\item $\fH$ is rational if all the hyperplanes in $\fH$ have the form $H_{v}$ where $v$ has integer coordinates;
\item $\fH$ is complete if $H_{v_{i}}\in \fH$ for all $1\leq i\leq m$.
\end{enumerate}
Let $\cF$ be a $\fH$-facet. We denote by $W_{\cF}$ the standard parabolic subgroup generated by 
$$\{s\in S| L(s)=0 \text{ for all $L\in \cF$}\}.$$ 
We say that a subset $C$ of $W$ is stable by translation by $W_{I}$ ($I\subset S$) on the left  (respectively on both sides) if for all $w\in C$ we have $zw\in C$ (respectively $zwz'\in C$) for all $z,z'\in W_{I}$. \\
Finally we denote by $\cC_{\cL}(L)$ (respectively $\cC_{\cLR}(L)$) the partition of $W$ into left (respectively two-sided) cells with respect to the weight function $L$. 

We can now state Bonnaf\'e's conjecture for the partition of $W$ into cells. It is enough to state it for left and two-sided cells since the map $x\mapsto x^{-1}$ sends left cells to right cells.
\begin{conj}
\label{SC}
There exists a finite complete rational hyperplane arrangement $\fH$ of $V$ such that the following hold
\begin{enumerate}
\item If $L_{1},L_{2}$ are two weight functions belonging to the same $\fH$-facet $\cF$ then $\cC_{\cL}(L_{1})$ (respectively $\cC_{\cLR}(L_{1})$) and $\cC_{\cL}(L_{2})$ (respectively $\cC_{\cLR}(L_{2})$) coincide (we denote these partitions by $\cC_{\cL}(\cF)$ and $\cC_{\cLR}(\cF)$).
\item Let $\cF$ be an $\fH$-facet. Then the cells of $\cC_{\cL}(\cF)$ (respectively $\cC_{\cLR}(\cF)$) are the smallest subsets of $W$ which are at the same time unions of cells of $\cC_{\cL}(C)$ (resp $\cC_{\cLR}(C)$) for all $C\in C_{\cF}$ and stable by translation on the left (respectively on both sides) by $W_{\cF}$.
\end{enumerate}
\end{conj}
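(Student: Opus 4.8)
The plan is to reduce the semicontinuity conjecture to the combinatorics of the algorithm of Section 3, which (under conjectures \textbf{P4}, \textbf{P9}--\textbf{P11}, \textbf{C1} and \textbf{C2}) computes the partition of $W$ into cells. The key observation is that, for a fixed two-sided cell $b$ of $\fC$, the value $\ba_{\fC}(b)$ equals $L(w_I)$ for a suitable $I\subsetneq S$ (via Proposition \ref{afonc}), hence is a \emph{linear} functional of the weight function $L=(L(s_1),\ldots,L(s_m))$ with non-negative integer coefficients. I would therefore take $\fH$ to be the common refinement of three rational arrangements: first, the completeness hyperplanes $H_{v_1},\ldots,H_{v_m}$; second, the rational arrangement coming from the conjecture applied to the finite parabolic subgroups $W_I$, $I\in\cJ$, so that $\cP_{\cLR,\fC}$ is constant on each of its facets; and third, for every pair of blocks $b,b'$ of $\fC$, the hyperplane $H_v$ cut out by the vanishing of the integer linear functional $\ba_{\fC}(b)-\ba_{\fC}(b')$, i.e. the locus where two blocks exchange their $\ba_{\fC}$-values. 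All three families consist of hyperplanes $H_v$ with integer $v$, so $\fH$ is finite, complete and rational.

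For part (1) I would observe that the whole algorithm depends on $L$ only through (i) the partition $\cP_{\cLR,\fC}$ of $\fC$ and (ii) the total preorder on its blocks induced by $\ba_{\fC}$. Within a single $\fH$-facet $\cF$ both data are constant: (i) by the second family of hyperplanes, and (ii) because no hyperplane of the third family is crossed, so a single numbering $b_0,\ldots,b_m$ (resp.\ $c_0,\ldots,c_n$) may be used in Steps 2 and 4 for all $L\in\cF$. Since the sets $\tb_k$, $\tc_k$, the relation $\sim_{\fC}$ and the final partition $\{\tc_0,\ldots,\tc_n\}$ are manufactured purely from (i) and (ii), the partitions $\cC_{\cL}(L)$ and $\cC_{\cLR}(L)$ coincide throughout $\cF$. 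This proves (1) and legitimises the notation $\cC_{\cL}(\cF)$, $\cC_{\cLR}(\cF)$.

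For part (2), fix a facet $\cF$ with parabolic subgroup $W_{\cF}$, and establish the two defining properties together with minimality. Stability by $W_{\cF}$ is exactly the Proposition on parameters equal to zero: along $\cF$ the parameters attached to the generators of $W_{\cF}$ vanish, so every cell has the shape $W_{\cF}.C$ (resp.\ $W_{\cF}.C.W_{\cF}$), which is $W_{\cF}$-stability on the left (resp.\ on both sides). That each cell of $\cC_{\cL}(\cF)$ is a union of cells of $\cC_{\cL}(C)$ for every chamber $C\in C_{\cF}$ is the genuine \emph{semicontinuity} statement: specialising from a chamber to a wall in its closure can only coarsen matters, since $\cP_{\cLR,\fC}$ coarsens and the $\ba_{\fC}$-values collapse. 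The remaining, and hardest, point is minimality: one must show that no coarsening occurs beyond that forced by $W_{\cF}$-stability and the chamber partitions. I would attack this by tracking, hyperplane by hyperplane, exactly which blocks $b_k$ of $\fC$ merge as $\cF$ is approached from an adjacent chamber, and checking that each merger is already accounted for either by a chamber cell or by a $W_{\cF}$-translate; the governing mechanism is the subtraction $-\bigcup_{l<k}\tb_l$ in Steps 2 and 4, whose effect changes precisely when a hyperplane of the third family is crossed.

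The main obstacle is this minimality half of part (2). The two stated properties are essentially formal, given the vanishing-parameter Proposition and the monotonicity of the algorithm, but proving that the facet cells are the \emph{smallest} subsets with those properties demands a precise comparison between $\cF$ and each bordering chamber, that is, an analysis of how the recursive subtractions in Steps 2 and 4 degenerate on the wall. For $\tB_2$ and $\tG_2$ this can be settled by running the algorithm explicitly for each facet and each adjacent chamber --- finitely many linear regions in a two-dimensional parameter space --- and comparing the outputs directly, which is the route I would take here.
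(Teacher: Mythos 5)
Two things need saying at the outset: the statement you set out to prove is Bonnaf\'e's conjecture, which the paper itself leaves open for general $(W,S)$; what the paper actually proves is only the rank-2 affine cases (Theorems \ref{G2hyp} and \ref{B2hyp}), and there its entire proof is the explicit verification that your final paragraph retreats to --- ``straightforward once the partition of $W$ into cells is known for all choices of parameters.'' The real content hiding behind that sentence is Theorems \ref{decG2} and \ref{decB2}, which identify the output of the Section 3 algorithm with the genuine cell partition for every parameter region, via the generalized induction machinery of Section 5 (conditions {\bf I1}--{\bf I5}, Theorem \ref{Gind}, Lemma \ref{itsc}) together with explicit GAP computations. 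Your proposal \emph{assumes} this identification (``under {\bf P4}, {\bf P9}--{\bf P11}, {\bf C1} and {\bf C2}, the algorithm computes the cells'') rather than establishing it, so even your rank-2 fallback only compares algorithm outputs with algorithm outputs, not cells with cells. That is the principal gap between your outline and a proof of anything: without Conjecture \ref{dec} in hand for each facet, running the algorithm on both sides of a wall verifies nothing about $\cC_{\cL}(L)$ or $\cC_{\cLR}(L)$.

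Two local defects in the general part of your argument also deserve correction. First, your claim that $\ba_{\fC}(b)=L(w_{I})$ ``via Proposition \ref{afonc}'' is false for most blocks: Proposition \ref{afonc} covers only the longest element of a finite parabolic, and in type $\tG_{2}$ with $a>b$ one has $\ba_{\fC}(b_{1})=3a-2b$ for $b_{1}=\{s_{1}s_{2}s_{1}s_{2}s_{1}\}$ (Table \ref{PlrcG}). What survives is linearity with integer, possibly negative, coefficients --- and even that linearity on a facet is part of the parabolic semicontinuity data you are invoking, not a freestanding fact. This does not destroy the rationality of your third family of hyperplanes, but your justification of it is wrong. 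Second, and more seriously, your proposed minimality mechanism --- that the effect of the subtractions in Steps 2 and 4 ``changes precisely when a hyperplane of the third family is crossed'' --- is refuted by the paper's own data: at $a/b=5/3$ the values $\ba_{\fC}(b_{1})=3a-2b$ and $\ba_{\fC}(b_{4})=3b$ exchange order, yet Table \ref{PcG} records $b_{1}\overset{5/3}{\leftrightarrow}b_{4}$ with $\tb_{1}\cap\tb_{4}=\emptyset$ and the partition of $W$ unchanged across the crossing; correspondingly $\cH_{(3,-5)}$ is absent from the essential hyperplanes of Theorem \ref{G2hyp}. So collisions of $\ba_{\fC}$-values neither force mergers nor track them. (Carrying the superfluous hyperplane in your $\fH$ is harmless for existence, since the conjecture asks only for \emph{some} arrangement, but it means the minimality half of (2) cannot be read off your third family at all.) Which collisions actually matter, and whether the facet cells are the smallest $W_{\cF}$-stable unions of chamber cells, can only be decided by the facet-by-adjacent-chamber comparison of the true partitions --- which is exactly, and only, what the paper does.
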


\begin{Rem}
If $\fH$ and $\fH'$ are two hyperplane arrangements such that Conjecture \ref{SC} holds, then it holds for $\fH\cap \fH'$. It follows that there exists (if the conjecture holds) a unique minimal finite complete rational hyperplane arrangement such that statements (1) and (2) hold. The elements of this minimal arrangement are called essential hyperplanes.
\end{Rem}


\section{Decomposition of the affine Weyl groups of rank $2$}
The main result of this section is that Conjectures \ref{dec} and \ref{SC} hold for $\tG_{2}$ and $\tB_{2}$. The methods involved for the proof of this result will be presented in the next section. We give some details in type $\tG_{2}$. In type $\tB_{2}$ there are too many distinct partitions into cells to describe them here; we refer to \cite[\S 4]{comp} for the explicit partitions. 

We use the geometric presentation of an affine Weyl group as described in \cite{bremke,Lus1,Xi}. We refer to these publications for details about this presentation. 

\subsection{Affine Weyl group of type $\tG_{2}$}
Let $(W,S)$ be the affine Weyl group of type $\tG_{2}$ with diagram and weight function given by
\begin{center}
\begin{picture}(150,32)
\put( 40, 10){\circle{10}}
\put( 44,  7){\line(1,0){33}}
\put( 45,  10){\line(1,0){30.5}}
\put( 44, 13){\line(1,0){33}}
\put( 81, 10){\circle{10}}
\put( 86, 10){\line(1,0){29}}
\put(120, 10){\circle{10}}
\put( 38, 20){$a$}
\put( 78, 20){$b$}
\put(118, 20){$b$}
\put( 38, -3){$s_{1}$}
\put( 78, -3){$s_{2}$}
\put(118,-3){$s_{3}$}
\end{picture}
\end{center}
where $a,b$ are positive integers. Then we have
\begin{Th}
\label{decG2}
Conjecture \ref{dec} holds for $W$.
\end{Th}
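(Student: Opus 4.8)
The plan is to prove that the partition $\{\tc_{0},\dots,\tc_{n}\}$ produced by the algorithm of Section 3 is exactly the partition of $W$ into two-sided cells, with the left-connected components of the $\tc_{k}$ as left cells. By the Remark following Conjecture \ref{dec}, it suffices to establish \textbf{P4}, \textbf{P9}--\textbf{P11}, \textbf{P12}, \textbf{C1} and \textbf{C2} for $W$ itself and for every weight function $(a,b)$; since the unequal-parameter case admits no geometric interpretation, each of these has to be verified by hand, and this is the real substance of the argument.

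I would begin with the combinatorial set-up. By the reductions of Section 4 I may assume $a,b>0$. The proper parabolic subgroups of $W$ are the dihedral groups generated by $\{s_{2},s_{3}\}$, $\{s_{1},s_{3}\}$ and $\{s_{1},s_{2}\}$, of types $I_{2}(3)$, $I_{2}(2)$ and $I_{2}(6)$ respectively; for these \textbf{P1}--\textbf{P15} and \textbf{C2} are known, and their cells and $\ba$-values are read off from the dihedral table. In particular the standing hypotheses of Section 3 hold, so the algorithm applies. Running Steps 1--4 then yields an explicit candidate partition $\{\tc_{k}\}$. Because the relative order of the $\ba_{\fC}$-values on the two-sided cells of $\fC$ depends only on the ratio $a/b$ and changes only across finitely many rational lines in the parameter plane, there are finitely many combinatorial cases, the bookkeeping for which I would carry out in GAP \cite{GAP}.

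To obtain \textbf{P4}, \textbf{P12} and \textbf{C1} for $W$ I would compute the $\ba$-function of $W$ directly. Using Lusztig's bound $N=\max_{I}L(w_{I})$ together with the geometric realization of $W$ by alcoves, the a priori infinite computation of the $h_{x,y,z}$ reduces to a finite one, from which I read off $\ba$, verify $\ba=\ba'$ and $\ba_{I}=\ba$ on each $W_{I}$, and check that $\ba$ is constant on two-sided cells. This identifies $B_{i}$ with $\{w\mid \ba(w)=i\}$ and shows that each two-sided cell lies in a single $B_{i}$.

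The remaining and hardest point is to establish \textbf{P9}--\textbf{P11} and \textbf{C2} for $W$. Granting these, the Claim shows the $\tc_{k}$ are the connected components of the $B_{i}$; each is connected with constant $\ba$, hence lies in a single two-sided cell by Lemma \ref{connectedcell}(3), while Lemma \ref{lcells} shows that two-sided cells are connected, so each cell, lying in one $B_{i}$, cannot meet two distinct components and therefore lies in a single $\tc_{k}$; the two inclusions give equality, and the statement on left cells then follows from Lemma \ref{lcells}. To prove \textbf{P9}--\textbf{P11} and \textbf{C2} themselves I would use the generalized induction of cells from the dihedral parabolics developed in \cite{jeju1,jeju2,jeju3}, the lowest piece $\tc_{0}$ being the lowest two-sided cell of $W$ by the proposition on the lowest two-sided cell (cf. \cite{bremke}). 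The main obstacle is exactly this verification of \textbf{P4}, \textbf{P9}--\textbf{P11}, \textbf{P12} and \textbf{C1} for the infinite group at every parameter value: lacking positivity, one must compute products $C_{x}C_{y}$ and $\ba$-values explicitly and uniformly over all parameter regions, which is where the geometric reductions of \cite{jeju1,jeju2,jeju3} and the computer calculations recorded in \cite{comp,GAP} do the real work.
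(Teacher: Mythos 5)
There is a genuine gap: your plan is conditional on exactly the statements that are not available. You propose to verify \textbf{P4}, \textbf{P9}--\textbf{P11}, \textbf{P12}, \textbf{C1} and \textbf{C2} for the affine group $W$ itself at every parameter and then invoke the Remark following Conjecture \ref{dec}. But none of the tools you cite delivers these. In particular, your claim that the computation of the $\ba$-function ``reduces to a finite one'' is unsubstantiated: $\ba(z)$ is by definition a supremum of degrees of $h_{x,y,z}$ over \emph{all} pairs $x,y\in W$, an infinite set, and Lusztig's bound $N=\max_{I}L(w_{I})$ only gives $\ba(z)\leq N$, not its value; no finiteness reduction is given, and in the unequal-parameter case there is no geometric interpretation or positivity to fall back on. Likewise, the generalized induction of \cite{jeju1,jeju2,jeju3} produces left ideals of $\cH$, hence sets that are unions of left cells; it does not yield the $\ba$-function statements \textbf{P9}--\textbf{P11}, so your ``hardest point'' is left unproved. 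Your argument is also circular in spirit: you want the conjectures in order to identify the partition, but the machinery you invoke to prove the conjectures is the machinery that determines the partition directly.

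The paper's proof is unconditional and runs the logic in the opposite direction. It sandwiches each left-connected component $\tc_{i}^{j}$ of the algorithm's output: first, $\tc_{i}^{j}$ is \emph{included} in a left cell, by explicit Kazhdan--Lusztig polynomial computations uniform over each parameter facet (\cite[\S 6]{jeju1}, using \cite[Proposition 3.3]{jeju1} and GAP); second, $\tc_{i}^{j}$ is a \emph{union} of left cells, by taking $U$ to be the minimal-length elements $u_{i}^{j}$, verifying conditions \textbf{I1}--\textbf{I5} for the sets $X_{u_{i}^{j}}$, computing the pre-order $\preceq$ on $U$ and checking property $(\dag)$, and applying Theorem \ref{Gind} together with Claim \ref{cij}. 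Equality follows, and the two-sided cells are then assembled from the left cells via Remark \ref{utsc} and Lemma \ref{itsc} (stability under inversion and the intersection condition $T_{i}^{-1}\cap T_{j}\neq\emptyset$). Conjectural statements such as \textbf{P} (and \textbf{P14} for these groups) are obtained as \emph{consequences} of this determination, not used as hypotheses. To repair your proposal you would have to replace the appeal to \textbf{P4}, \textbf{P9}--\textbf{P11}, \textbf{C1}, \textbf{C2} for $W$ by this two-sided containment argument, since proving those conjectures for $W$ at all parameters is an open problem that the theorem is designed to avoid.
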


We now describe the partition of $W$ into cells. First of all, all the proper standard parabolic subgroups of $W$ are dihedral, thus looking at Section \ref{dihedral}, one can see that the decomposition of $\fC$ with respect to $\sim_{\cL\cR,\fC}$ will be constant whether $a>b$, $a=b$ or $a<b$. Furthermore we know that {\bf P1}--{\bf P15} and {\bf C2} hold for dihedral groups (see Section \ref{dihedral}). The decomposition of $\tG_{2}$ when $a=b$ has been given by Lusztig in \cite{Lus3} and one can check that Conjecture \ref{dec} holds in this case.

It is clear that the partition of $W$ into cells only depends on the ratio $a/b$. We set $r=a/b$. To simplify the notation, we denote by $W_{i,j}$ the parabolic subgroup generated by $\{s_{i},s_{j}\}$ and by $w_{i,j}$ the longest element in $W_{i,j}$.

\subsection{Case ${\bf r>1}$}
We obtain the following partition $\cP_{\cL\cR,\fC}$. 
{\small
\begin{table}[htbp] \caption{Partition $\cP_{\cL\cR,\fC}$ when $a>b$ and values of the $\ba$-function} 
\label{PlrcG}
\begin{center}
$\renewcommand{\arraystretch}{1.2}
\begin{array}{|l|c|} \hline
b_{6}=\{e\} & 0 \\
b_{5}=W_{2,3}-\{w_{2,3},e\} & b\\
b_{4}=\{w_{2,3}\}&  3b\\
b_{3}=W_{1,2}-\{e,s_{2},s_{1}s_{2}s_{1}s_{2}s_{1},w_{1,2}\}&  a\\
b_{2}=\{w_{1,3}\} & a+b \\
b_{1}=\{s_{1}s_{2}s_{1}s_{2}s_{1}\}& 3a-2b\\ 
b_{0}= \{w_{1,2}\} & 3a+3b\\ \hline
\end{array}$
\end{center}
\end{table}
}

The next step is to determine the partition $\cP_{\fC}$ and the order induced by the $\ba_{\fC}$-function on it when the parameters are varying. We describe these partitions in Table \ref{PcG} (we also put the partition in the case where $r=1$). On a given interval the values of the $\ba_{\fC}$-function are decreasing from left to right except when we write $b_{i}\overset{r'}{\leftrightarrow} b_{j}$ in which case it means that
\begin{enumerate}
\item for $a/b=r'$ we have $\ba(b_{i})=\ba(b_{j})$;
\item for all $a,b$ such that $a/b>r'$ we have $\ba(b_{i})>\ba(b_{j})$;
\item for all $a,b$ such that $a/b<r'$ we have $\ba(b_{i})<\ba(b_{j})$;
\item the sets $\tb_{i}$ and $\tb_{j}$ are disjoint.
\end{enumerate}

{\small 
\begin{table}[htbp] \caption{Partition $\cP_{\fC}$} 
\label{PcG}
\begin{center}
\renewcommand{\arraystretch}{1.3}
$

\end{center}

\end{textblock}

\newpage

\subsection{The asymptotic case}
When $r=\infty$ (i.e. $a>0$ and $b=0$) we have
$$W= \fS_{3}\ltimes \tA_{2}$$ 
where $\fS_{3}$ is generated by $I:=\{s_{2},s_{3}\}$ and $\tA_{2}$ is generated by $s_{1},s_{2}s_{1}s_{2},s_{3}s_{2}s_{1}s_{2}s_{3}$. We know that the left cells of $W$ in this case are of the form $W_{I}.C$ where $C$ is a cell of $\tA_{2}$. We show this partition in Figure 10. The black alcoves correspond to the parabolic subgroup $W_{I}$ (i.e. to the cell $W_{I}.\{e\}$).

Now when $r=0$ (i.e. $a=0$ and $b>0$) we have
$$W= (\nZ/2\nZ)\ltimes \tA_{2}$$ 
where $\nZ/2\nZ$ is generated by $I:=\{s_{1}\}$ and $\tA_{2}$ is generated by $s_{2},s_{3},s_{1}s_{2}s_{1}$. We know that the left cells of $W$ in this case are of the form $W_{I}.C$ where $C$ is a cell of $\tA_{2}$. We show this partition in Figure 11. The black alcoves correspond to the parabolic subgroup $W_{I}$ (i.e. to the cell $W_{I}.\{e\}$).

$\ $

\begin{textblock}{10}(3.7,9.8)

\psset{unit=0.5cm}
\begin{pspicture}(-6,6)(6,-6)
\psset{linewidth=.13mm}
\SpecialCoor

\pspolygon[fillstyle=solid,fillcolor=black](0,0)(.866,1.5)(-0.866,1.5)

\pspolygon[fillstyle=solid,fillcolor=green!80!black!70!](.866,1.5)(-0.866,1.5)(-3.464,6)(-1.732,6)(0,3)(1.732,6)(3.464,6)

\pspolygon[fillstyle=solid,fillcolor=green!80!black!70!](.866,1.5)(6.062,1.5)(6.062,0)(1.732,0)(5.196,-6)(2.598,-6)(0,0)

\pspolygon[fillstyle=solid,fillcolor=green!80!black!70!](-.866,1.5)(-6.062,1.5)(-6.062,0)(-1.732,0)(-5.196,-6)(-2.598,-6)(0,0)

\pspolygon[fillstyle=solid,fillcolor=red!5!yellow!42!](0,0)(-3.464,-6)(3.464,-6)
\pspolygon[fillstyle=solid,fillcolor=red!5!yellow!42!](1.732,0)(6.062,0)(6.062,-6)(5.196,-6)
\pspolygon[fillstyle=solid,fillcolor=red!5!yellow!42!](0.866,1.5)(6.062,1.5)(6,6.062)(3.464,6)
\pspolygon[fillstyle=solid,fillcolor=red!5!yellow!42!](0,3)(1.732,6)(-1.732,6)
\pspolygon[fillstyle=solid,fillcolor=red!5!yellow!42!](-0.866,1.5)(-6.062,1.5)(-6,6.062)(-3.464,6)
\pspolygon[fillstyle=solid,fillcolor=red!5!yellow!42!](-1.732,0)(-6.062,0)(-6.062,-6)(-5.196,-6)


\SpecialCoor
\psline(0,-6)(0,6)
\multido{\n=1+1}{7}{
\psline(\n;30)(\n;330)}
\rput(1;30){\psline(0,0)(0,5.5)}
\rput(1;330){\psline(0,0)(0,-5.5)}
\rput(2;30){\psline(0,0)(0,5)}
\rput(2;330){\psline(0,0)(0,-5)}
\rput(3;30){\psline(0,0)(0,4.5)}
\rput(3;330){\psline(0,0)(0,-4.5)}
\rput(4;30){\psline(0,0)(0,4)}
\rput(4;330){\psline(0,0)(0,-4)}
\rput(5;30){\psline(0,0)(0,3.5)}
\rput(5;330){\psline(0,0)(0,-3.5)}
\rput(6;30){\psline(0,0)(0,3)}
\rput(6;330){\psline(0,0)(0,-3)}
\rput(7;30){\psline(0,0)(0,2.5)}
\rput(7;330){\psline(0,0)(0,-2.5)}
\multido{\n=1+1}{7}{
\psline(\n;150)(\n;210)}
\rput(1;150){\psline(0,0)(0,5.5)}
\rput(1;210){\psline(0,0)(0,-5.5)}
\rput(2;150){\psline(0,0)(0,5)}
\rput(2;210){\psline(0,0)(0,-5)}
\rput(3;150){\psline(0,0)(0,4.5)}
\rput(3;210){\psline(0,0)(0,-4.5)}
\rput(4;150){\psline(0,0)(0,4)}
\rput(4;210){\psline(0,0)(0,-4)}
\rput(5;150){\psline(0,0)(0,3.5)}
\rput(5;210){\psline(0,0)(0,-3.5)}
\rput(6;150){\psline(0,0)(0,3)}
\rput(6;210){\psline(0,0)(0,-3)}
\rput(7;150){\psline(0,0)(0,2.5)}
\rput(7;210){\psline(0,0)(0,-2.5)}
\multido{\n=1.5+1.5}{4}{
\psline(-6.062,\n)(6.062,\n)}
\multido{\n=0+1.5}{5}{
\psline(-6.062,-\n)(6.062,-\n)}
\psline(0;0)(7;30)

\rput(0,1){\psline(0;0)(7;30)}
\rput(0,1){\psline(0;0)(7;210)}
\rput(0,2){\psline(0;0)(7;30)}
\rput(0,2){\psline(0;0)(7;210)}
\rput(0,3){\psline(0;0)(6;30)}
\rput(0,3){\psline(0;0)(7;210)}
\rput(0,4){\psline(0;0)(4;30)}
\rput(0,4){\psline(0;0)(7;210)}
\rput(0,5){\psline(0;0)(2;30)}
\rput(0,5){\psline(0;0)(7;210)}
\rput(0,6){\psline(0;0)(7;210)}
\rput(-1.732,6){\psline(0;0)(5;210)}
\rput(-3.464,6){\psline(0;0)(3;210)}
\rput(-5.196,6){\psline(0;0)(1;210)}

\rput(0,-1){\psline(0;0)(7;30)}
\rput(0,-1){\psline(0;0)(7;210)}
\rput(0,-2){\psline(0;0)(7;30)}
\rput(0,-2){\psline(0;0)(7;210)}
\rput(0,-3){\psline(0;0)(7;30)}
\rput(0,-3){\psline(0;0)(6;210)}
\rput(0,-4){\psline(0;0)(7;30)}
\rput(0,-4){\psline(0;0)(4;210)}
\rput(0,-5){\psline(0;0)(7;30)}
\rput(0,-5){\psline(0;0)(2;210)}
\rput(0,-6){\psline(0;0)(7;30)}
\rput(1.732,-6){\psline(0;0)(5;30)}
\rput(3.464,-6){\psline(0;0)(3;30)}
\rput(5.196,-6){\psline(0;0)(1;30)}

\psline(0;0)(6.928;60)
\rput(1.732,0){\psline(0;0)(6.928;60)}
\rput(3.464,0){\psline(0;0)(5.196;60)}
\rput(5.196,0){\psline(0;0)(1.732;60)}
\rput(1.732,0){\psline(0;0)(6.928;240)}
\rput(3.464,0){\psline(0;0)(6.928;240)}
\rput(5.196,0){\psline(0;0)(6.928;240)}
\rput(6.062,-1.5){\psline(0;0)(5.196;240)}
\rput(6.062,-4.5){\psline(0;0)(1.732;240)}
\rput(-1.732,0){\psline(0;0)(6.928;60)}
\rput(-3.464,0){\psline(0;0)(6.928;60)}
\rput(-5.196,0){\psline(0;0)(6.928;60)}
\rput(-1.732,0){\psline(0;0)(6.928;240)}
\rput(-3.464,0){\psline(0;0)(5.196;240)}
\rput(-5.196,0){\psline(0;0)(1.732;240)}
\rput(-6.062,1.5){\psline(0;0)(5.196;60)}
\rput(-6.062,4.5){\psline(0;0)(1.732;60)}

\psline(0;0)(6.928;120)
\psline(0;0)(6.928;300)
\rput(-1.732,0){\psline(0;0)(6.928;120)}
\rput(-3.464,0){\psline(0;0)(5.196;120)}
\rput(-5.196,0){\psline(0;0)(1.732;120)}
\rput(-1.732,0){\psline(0;0)(6.928;300)}
\rput(-3.464,0){\psline(0;0)(6.928;300)}
\rput(-5.196,0){\psline(0;0)(6.928;300)}
\rput(-6.062,-1.5){\psline(0;0)(5.196;300)}
\rput(-6.062,-4.5){\psline(0;0)(1.732;300)}
\rput(1.732,0){\psline(0;0)(6.928;300)}
\rput(3.464,0){\psline(0;0)(5.196;300)}
\rput(5.196,0){\psline(0;0)(1.732;300)}
\rput(1.732,0){\psline(0;0)(6.928;120)}
\rput(3.464,0){\psline(0;0)(6.928;120)}
\rput(5.196,0){\psline(0;0)(6.928;120)}
\rput(6.062,1.5){\psline(0;0)(5.196;120)}
\rput(6.062,4.5){\psline(0;0)(1.732;120)}

\rput(0,1){\psline(0;0)(7;150)}
\rput(0,1){\psline(0;0)(7;330)}
\rput(0,2){\psline(0;0)(7;150)}
\rput(0,2){\psline(0;0)(7;330)}
\rput(0,3){\psline(0;0)(6;150)}
\rput(0,3){\psline(0;0)(7;330)}
\rput(0,4){\psline(0;0)(4;150)}
\rput(0,4){\psline(0;0)(7;330)}
\rput(0,5){\psline(0;0)(7;330)}
\rput(0,6){\psline(0;0)(7;330)}
\rput(-1.732,6){\psline(0;0)(2;330)}

\rput(0,-1){\psline(0;0)(7;150)}
\rput(0,-1){\psline(0;0)(7;330)}
\rput(0,-2){\psline(0;0)(7;150)}
\rput(0,-2){\psline(0;0)(7;330)}
\rput(0,-3){\psline(0;0)(7;150)}
\rput(0,-3){\psline(0;0)(6;330)}
\rput(0,-4){\psline(0;0)(7;150)}
\rput(0,-4){\psline(0;0)(4;330)}
\rput(0,-5){\psline(0;0)(7;150)}
\rput(0,-5){\psline(0;0)(2;330)}
\rput(0,-6){\psline(0;0)(7;150)}

\rput(6.062,3.5){\psline(0;0)(5;150)}
\rput(6.062,4.5){\psline(0;0)(3;150)}
\rput(6.062,5.5){\psline(0;0)(1;150)}

\rput(-6.062,-3.5){\psline(0;0)(5;330)}
\rput(-6.062,-4.5){\psline(0;0)(3;330)}
\rput(-6.062,-5.5){\psline(0;0)(1;330)}

\psline(0;0)(7;330)
\psline(0;0)(7;150)
\psline(0;0)(7;210)
\psline(0;0)(6.928;240)

\rput(0,-6.5){{\footnotesize \textsc{Figure 10.} Decomposition of $\tG_{2}$ for $r=\infty$}}

\end{pspicture}

\end{textblock}

\begin{textblock}{10}(11,9.8)

\psset{unit=0.5cm}
\begin{pspicture}(-6,6)(6,-6)
\psset{linewidth=.13mm}
\SpecialCoor

\pspolygon[fillstyle=solid,fillcolor=black](0,0)(0,1)(.866,.5)

\pspolygon[fillstyle=solid,fillcolor=blue!70!black!80!](0,1)(0,6)(.866,6)(.866,1.5)(6.062,4.5)(6.062,3.5)(.866,.5)

\pspolygon[fillstyle=solid,fillcolor=blue!70!black!80!](.866,.5)(6.062,-2.5)(6.062,-3.5)(.866,-.5)(.866,-6)(0,-6)(0,0)

\pspolygon[fillstyle=solid,fillcolor=blue!70!black!80!](0,0)(-6.062,-3.5)(-6.062,-2.5)(-.866,.5)(-6.062,3.5)(-6.062,4.5)(0,1)

\pspolygon[fillstyle=solid,fillcolor=red!5!yellow!42!](0,1)(-6.062,4.5)(-6.062,6)(0,6)
\pspolygon[fillstyle=solid,fillcolor=red!5!yellow!42!](.866,1.5)(.866,6)(6.062,6)(6.062,4.5)
\pspolygon[fillstyle=solid,fillcolor=red!5!yellow!42!](.866,.5)(6.062,3.5)(6.062,-2.5)
\pspolygon[fillstyle=solid,fillcolor=red!5!yellow!42!](-.866,.5)(-6.062,3.5)(-6.062,-2.5)
\pspolygon[fillstyle=solid,fillcolor=red!5!yellow!42!](.866,-.5)(.866,-6)(6.062,-6)(6.062,-3.5)
\pspolygon[fillstyle=solid,fillcolor=red!5!yellow!42!](0,0)(0,-6)(-6.062,-6)(-6.062,-3.5)

\SpecialCoor
\psline(0,-6)(0,6)
\multido{\n=1+1}{7}{
\psline(\n;30)(\n;330)}
\rput(1;30){\psline(0,0)(0,5.5)}
\rput(1;330){\psline(0,0)(0,-5.5)}
\rput(2;30){\psline(0,0)(0,5)}
\rput(2;330){\psline(0,0)(0,-5)}
\rput(3;30){\psline(0,0)(0,4.5)}
\rput(3;330){\psline(0,0)(0,-4.5)}
\rput(4;30){\psline(0,0)(0,4)}
\rput(4;330){\psline(0,0)(0,-4)}
\rput(5;30){\psline(0,0)(0,3.5)}
\rput(5;330){\psline(0,0)(0,-3.5)}
\rput(6;30){\psline(0,0)(0,3)}
\rput(6;330){\psline(0,0)(0,-3)}
\rput(7;30){\psline(0,0)(0,2.5)}
\rput(7;330){\psline(0,0)(0,-2.5)}
\multido{\n=1+1}{7}{
\psline(\n;150)(\n;210)}
\rput(1;150){\psline(0,0)(0,5.5)}
\rput(1;210){\psline(0,0)(0,-5.5)}
\rput(2;150){\psline(0,0)(0,5)}
\rput(2;210){\psline(0,0)(0,-5)}
\rput(3;150){\psline(0,0)(0,4.5)}
\rput(3;210){\psline(0,0)(0,-4.5)}
\rput(4;150){\psline(0,0)(0,4)}
\rput(4;210){\psline(0,0)(0,-4)}
\rput(5;150){\psline(0,0)(0,3.5)}
\rput(5;210){\psline(0,0)(0,-3.5)}
\rput(6;150){\psline(0,0)(0,3)}
\rput(6;210){\psline(0,0)(0,-3)}
\rput(7;150){\psline(0,0)(0,2.5)}
\rput(7;210){\psline(0,0)(0,-2.5)}
\multido{\n=1.5+1.5}{4}{
\psline(-6.062,\n)(6.062,\n)}
\multido{\n=0+1.5}{5}{
\psline(-6.062,-\n)(6.062,-\n)}
\psline(0;0)(7;30)

\rput(0,1){\psline(0;0)(7;30)}
\rput(0,1){\psline(0;0)(7;210)}
\rput(0,2){\psline(0;0)(7;30)}
\rput(0,2){\psline(0;0)(7;210)}
\rput(0,3){\psline(0;0)(6;30)}
\rput(0,3){\psline(0;0)(7;210)}
\rput(0,4){\psline(0;0)(4;30)}
\rput(0,4){\psline(0;0)(7;210)}
\rput(0,5){\psline(0;0)(2;30)}
\rput(0,5){\psline(0;0)(7;210)}
\rput(0,6){\psline(0;0)(7;210)}
\rput(-1.732,6){\psline(0;0)(5;210)}
\rput(-3.464,6){\psline(0;0)(3;210)}
\rput(-5.196,6){\psline(0;0)(1;210)}

\rput(0,-1){\psline(0;0)(7;30)}
\rput(0,-1){\psline(0;0)(7;210)}
\rput(0,-2){\psline(0;0)(7;30)}
\rput(0,-2){\psline(0;0)(7;210)}
\rput(0,-3){\psline(0;0)(7;30)}
\rput(0,-3){\psline(0;0)(6;210)}
\rput(0,-4){\psline(0;0)(7;30)}
\rput(0,-4){\psline(0;0)(4;210)}
\rput(0,-5){\psline(0;0)(7;30)}
\rput(0,-5){\psline(0;0)(2;210)}
\rput(0,-6){\psline(0;0)(7;30)}
\rput(1.732,-6){\psline(0;0)(5;30)}
\rput(3.464,-6){\psline(0;0)(3;30)}
\rput(5.196,-6){\psline(0;0)(1;30)}

\psline(0;0)(6.928;60)
\rput(1.732,0){\psline(0;0)(6.928;60)}
\rput(3.464,0){\psline(0;0)(5.196;60)}
\rput(5.196,0){\psline(0;0)(1.732;60)}
\rput(1.732,0){\psline(0;0)(6.928;240)}
\rput(3.464,0){\psline(0;0)(6.928;240)}
\rput(5.196,0){\psline(0;0)(6.928;240)}
\rput(6.062,-1.5){\psline(0;0)(5.196;240)}
\rput(6.062,-4.5){\psline(0;0)(1.732;240)}
\rput(-1.732,0){\psline(0;0)(6.928;60)}
\rput(-3.464,0){\psline(0;0)(6.928;60)}
\rput(-5.196,0){\psline(0;0)(6.928;60)}
\rput(-1.732,0){\psline(0;0)(6.928;240)}
\rput(-3.464,0){\psline(0;0)(5.196;240)}
\rput(-5.196,0){\psline(0;0)(1.732;240)}
\rput(-6.062,1.5){\psline(0;0)(5.196;60)}
\rput(-6.062,4.5){\psline(0;0)(1.732;60)}

\psline(0;0)(6.928;120)
\psline(0;0)(6.928;300)
\rput(-1.732,0){\psline(0;0)(6.928;120)}
\rput(-3.464,0){\psline(0;0)(5.196;120)}
\rput(-5.196,0){\psline(0;0)(1.732;120)}
\rput(-1.732,0){\psline(0;0)(6.928;300)}
\rput(-3.464,0){\psline(0;0)(6.928;300)}
\rput(-5.196,0){\psline(0;0)(6.928;300)}
\rput(-6.062,-1.5){\psline(0;0)(5.196;300)}
\rput(-6.062,-4.5){\psline(0;0)(1.732;300)}
\rput(1.732,0){\psline(0;0)(6.928;300)}
\rput(3.464,0){\psline(0;0)(5.196;300)}
\rput(5.196,0){\psline(0;0)(1.732;300)}
\rput(1.732,0){\psline(0;0)(6.928;120)}
\rput(3.464,0){\psline(0;0)(6.928;120)}
\rput(5.196,0){\psline(0;0)(6.928;120)}
\rput(6.062,1.5){\psline(0;0)(5.196;120)}
\rput(6.062,4.5){\psline(0;0)(1.732;120)}

\rput(0,1){\psline(0;0)(7;150)}
\rput(0,1){\psline(0;0)(7;330)}
\rput(0,2){\psline(0;0)(7;150)}
\rput(0,2){\psline(0;0)(7;330)}
\rput(0,3){\psline(0;0)(6;150)}
\rput(0,3){\psline(0;0)(7;330)}
\rput(0,4){\psline(0;0)(4;150)}
\rput(0,4){\psline(0;0)(7;330)}
\rput(0,5){\psline(0;0)(7;330)}
\rput(0,6){\psline(0;0)(7;330)}
\rput(-1.732,6){\psline(0;0)(2;330)}

\rput(0,-1){\psline(0;0)(7;150)}
\rput(0,-1){\psline(0;0)(7;330)}
\rput(0,-2){\psline(0;0)(7;150)}
\rput(0,-2){\psline(0;0)(7;330)}
\rput(0,-3){\psline(0;0)(7;150)}
\rput(0,-3){\psline(0;0)(6;330)}
\rput(0,-4){\psline(0;0)(7;150)}
\rput(0,-4){\psline(0;0)(4;330)}
\rput(0,-5){\psline(0;0)(7;150)}
\rput(0,-5){\psline(0;0)(2;330)}
\rput(0,-6){\psline(0;0)(7;150)}

\rput(6.062,3.5){\psline(0;0)(5;150)}
\rput(6.062,4.5){\psline(0;0)(3;150)}
\rput(6.062,5.5){\psline(0;0)(1;150)}

\rput(-6.062,-3.5){\psline(0;0)(5;330)}
\rput(-6.062,-4.5){\psline(0;0)(3;330)}
\rput(-6.062,-5.5){\psline(0;0)(1;330)}

\psline(0;0)(7;330)
\psline(0;0)(7;150)
\psline(0;0)(7;210)
\psline(0;0)(6.928;240)

\rput(0,-6.5){{\footnotesize \textsc{Figure 11.} Decomposition of $\tG_{2}$ for $r=0$}}

\end{pspicture}

\end{textblock}

\vspace{6.5cm}

\subsection{Semicontinuity in $\tG_{2}$}
We keep the notation of Section \ref{semi}. Let $V$ be an Euclidean space of dimension 2 with standard basis $v_{1},v_{2}$ corresponding to the conjugacy classes $\{s_{1}\}$ and $\{s_{2},s_{3}\}$ in $S$, respectively.  We have
\begin{Th}
\label{G2hyp}
Conjecture \ref{SC} holds for $W$. The essential hyperplanes are 
$$\cH_{(1,0)},\cH_{(0,1)},\cH_{(1,-1)},\cH_{(1,1)},\cH_{(2,-3)},\cH_{(2,3)},\cH_{(1,-2)},\cH_{(1,2)}.$$
\end{Th}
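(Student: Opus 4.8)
The plan is to combine the explicit cell description provided by Theorem~\ref{decG2} with the two reduction results of Section~\ref{semi} in order to verify the two clauses of Conjecture~\ref{SC} for the stated arrangement, and then to deduce minimality. First I would reduce to the closed first quadrant. In type $\tG_{2}$ the generator $s_{1}$ is not conjugate to $s_{2},s_{3}$, so the sign-change proposition of Section~\ref{semi} applies both with $J=\{s_{1}\}$ and with $J=\{s_{2},s_{3}\}$; consequently the partition of $W$ into left (resp. two-sided) cells depends only on $(|a|,|b|)$. This forces any admissible $\fH$ to be invariant under $v_{1}\mapsto -v_{1}$ and $v_{2}\mapsto -v_{2}$, and it identifies $\cH_{(1,1)},\cH_{(2,3)},\cH_{(1,2)}$ as the mirror images of $\cH_{(1,-1)},\cH_{(2,-3)},\cH_{(1,-2)}$, the two coordinate axes $\cH_{(1,0)}$ ($a=0$) and $\cH_{(0,1)}$ ($b=0$) being the common symmetry axes. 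It therefore suffices to analyse the closed first quadrant, whose $\fH$-facets are the four open sectors $r>2$, $2>r>3/2$, $3/2>r>1$, $1>r>0$ (Figures 1, 3, 5, 7), the three interior wall-rays $r=2,3/2,1$ (Figures 2, 4, 6), the two bounding half-axes $b=0$ and $a=0$ (Figures 10, 11), and the origin.

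For clause (1) I would invoke Theorem~\ref{decG2}: by the algorithm of Section~3 the partition of $W$ is determined by $\cP_{\fC}$ together with the order of the $\ba_{\fC}$-values recorded in Tables~\ref{PlrcG} and~\ref{PcG} (and their $r<1$ analogue), all of which depend only on which open sector contains $r$, or on the wall-ray itself. Hence $\cC_{\cL}(L)$ and $\cC_{\cLR}(L)$ are constant on each facet above. On the two half-axes one instead uses the parameters-equal-to-zero proposition, which writes the cells as $W_{I}\cdot C$ (resp. $C\cdot W_{I}$, resp. $W_{I}\cdot C\cdot W_{I}$) for $C$ a cell of the equal-parameter $\tA_{2}$-factor, again constant along the axis.

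The heart of the proof, and the step I expect to be the main obstacle, is clause (2): for each facet $\cF$ one must show that the cells of $\cC_{\cL}(\cF)$ are the smallest subsets that are simultaneously unions of the chamber-cells $\cC_{\cL}(C)$ over all chambers $C$ with $\cF\subset\bar{C}$ and stable under left translation by $W_{\cF}$ (and likewise for $\cC_{\cLR}(\cF)$ with both-sided stability). Along an interior wall $r=c$ with $c\in\{2,3/2,1\}$ every weight in $\cF$ has $a,b>0$, so $W_{\cF}=\{e\}$ and clause (2) reduces to the combinatorial assertion that the wall-partition is the join (finest common coarsening) of the two adjacent sector-partitions; I would verify this by comparing the explicit left-cell pictures and checking that the only mergers produced on the wall are exactly those given by $b_{2}\cup b_{4}$ at $r=2$, $b_{1}\cup b_{2}$ at $r=3/2$ and $b_{1}\cup b_{3}\cup b_{5}$ at $r=1$ in Table~\ref{PcG}. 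On the half-axes $W_{\cF}=\langle s_{1}\rangle\cong\nZ/2\nZ$ (on $a=0$) or $W_{\cF}=\langle s_{2},s_{3}\rangle\cong\fS_{3}$ (on $b=0$), and here the prescribed $W_{\cF}$-stability is precisely the content of the parameters-equal-to-zero proposition, so clause (2) amounts to matching the $\fS_{3}$- or $\nZ/2\nZ$-saturation of the adjacent sector-partition with the picture of Figure 10 or 11.

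Finally, for minimality I would show that each of the eight hyperplanes is essential and that none is missing. For the three interior rays this is immediate from Table~\ref{PcG}, since $\cP_{\fC}$ — and hence the partition of $W$ — genuinely changes at $r=2,3/2,1$; thus $\cH_{(1,-2)},\cH_{(2,-3)},\cH_{(1,-1)}$ and, by the symmetry of the first paragraph, their mirrors cannot be dropped, while the coordinate axes are forced because the relevant generators there acquire weight zero. Crucially, the only other $\ba_{\fC}$-crossing in the open first quadrant, namely $r=2/3$, is \emph{not} essential: there $\tb_{2}\cap\tb_{3}=\emptyset$, so the partition of $W$ is constant throughout $1>r>0$ (Figure 7) and no hyperplane is needed at $r=2/3$. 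Since in dimension two a ray across which the partition changes must lie on a hyperplane of every admissible arrangement, the stated eight hyperplanes are forced; combined with the sufficiency established above and the uniqueness of the minimal arrangement noted after Conjecture~\ref{SC}, they are exactly the essential hyperplanes.
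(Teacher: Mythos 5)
Your proposal is correct and takes essentially the same route as the paper: the paper's entire proof of this theorem is the one-line remark that the claim ``is straightforward once the partition of $W$ into cells is known for all choices of parameters,'' and your outline (sign-change symmetry to reduce to the first quadrant, constancy of the partition on each facet via Theorem \ref{decG2} and Tables \ref{PlrcG}--\ref{PcG}, the join/saturation checks at the walls $r=2,3/2,1$ and on the axes via the zero-parameter proposition, and the observation that the $\ba_{\fC}$-crossing at $r=2/3$ is not essential because $\tb_{2}\cap\tb_{3}=\emptyset$) is precisely the verification the paper leaves implicit.
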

\begin{proof}
This is straightforward once the partition of $W$ into cells is known for all choices  of parameters.
\end{proof}

\subsection{Affine Weyl group of type $\tB_{2}$}
\label{B2}
Let $(W,S)$ be the affine Weyl group of type $\tB_{2}$ with diagram and weight function given by
\begin{center}
\begin{picture}(150,32)
\put( 40, 10){\circle{10}}
\put( 44.5,  8){\line(1,0){31.75}}
\put( 44.5,  12){\line(1,0){31.75}}
\put( 81, 10){\circle{10}}
\put( 85.5,  8){\line(1,0){29.75}}
\put( 85.5,  12){\line(1,0){29.75}}
\put(120, 10){\circle{10}}
\put( 38, 20){$a$}
\put( 78, 20){$b$}
\put(118, 20){$c$}
\put( 38, -3){$s_{1}$}
\put( 78, -3){$s_{2}$}
\put(118,-3){$s_{3}$}
\end{picture}
\end{center}
where $a,b,c$ are positive integer. Then we have
\begin{Th}
\label{decB2}
Conjecture \ref{dec} holds for $W$.
\end{Th}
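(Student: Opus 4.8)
The plan is to prove that, for every choice of positive parameters $a,b,c$, the partition of $W$ produced by the algorithm of Section~3 coincides with the partition of $W$ into Kazhdan--Lusztig cells. All proper standard parabolic subgroups of $\tB_2$ are dihedral: $W_{1,2}$ and $W_{2,3}$ are of type $I_2(4)$ and $W_{1,3}$ is of type $I_2(2)$. Hence the input to the algorithm is completely determined by Section~\ref{dihedral}, where {\bf P1}--{\bf P15} and {\bf C2} are known and the $\ba_{\fC}$-values are tabulated. First I would run Steps~1--4 explicitly: the partition $\mathcal P_{\cLR,\fC}$, the order induced on it by $\ba_{\fC}$, and therefore the sets $\tilde c_k$, depend only on the relative sizes of $a,b,c$. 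This splits the verification into finitely many combinatorial regions (the facets of the hyperplane arrangement governing the $\tB_2$ case), considerably more numerous than in $\tG_2$, which is why the explicit list is relegated to \cite[\S4]{comp}; GAP handles the bookkeeping and produces the candidate connected components of the sets $B_i$.

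It then remains to prove that each candidate component is a genuine cell, which I would do by establishing two inclusions. For the gluing direction, each left-connected candidate piece should lie in a single left cell: by Lemma~\ref{connectedcell} it suffices to know that the $\ba$-function is constant on the piece and that {\bf P9}--{\bf P11} hold. Constancy follows once one verifies $\ba(w)=\ba'_{\fC}(w)$ for all $w$, i.e.\ conjecture {\bf C1} relative to $\fC$: the inequality $\ba(w)\ge \ba_{\fC}(u)$ for a parabolic factor $u$ of $w$ is the accessible direction, while the reverse bound $\ba(w)\le \ba'_{\fC}(w)$ is supplied by the general induction results of \cite{jeju1,jeju2}. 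The corresponding statement for two-sided (resp.\ right) cells is then read off via Lemma~\ref{lcells} and the symmetry $x\leq_{\cL}y\Leftrightarrow x^{-1}\leq_{\cR}y^{-1}$.

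For the separation direction I would first use that {\bf P4} forces $\ba$ to be monotone along $\leq_{\cLR}$, so two pieces carrying distinct $\ba_{\fC}$-values can never lie in a common two-sided cell; this disposes of all but the pieces sharing an $\ba$-value. The lowest two-sided cell $\tilde c_0$ is free: by the Proposition of Section~3.3 it is the lowest cell of $W$, a single two-sided cell by \cite[\S5]{bremke}, whose left cells are its left-connected components. For the remaining pieces I would combine the alcove-geometric description of cells from \cite{jeju2,jeju3} with a direct, computer-assisted computation of the relevant structure constants $h_{x,y,z}$ on a finite core of elements surrounding the fundamental alcove, after which the induction machinery of \cite{jeju1} propagates the cell relations to all of $W$.

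The hard part will be separating the pieces that share a \emph{common} $\ba$-value, since there the $\ba$-function yields no information and one cannot merely invoke {\bf P9}--{\bf P11}. Proving that such pieces are not amalgamated requires either an explicit witness that $x\not\leq_{\cL}y$ for representatives $x,y$ --- extracted from the alcove geometry together with the finite GAP computation of Kazhdan--Lusztig polynomials --- or a careful application of the induction results to exclude any connecting chain. Organizing these arguments so that they remain uniform as $(a,b,c)$ crosses the walls $a/b=\cdots$ and $b/c=\cdots$, and so that the finitely many combinatorial regions are all covered, is the principal technical burden of the proof.
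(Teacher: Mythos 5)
Your combinatorial phase (running the algorithm of Section~3 on the dihedral parabolics $W_{1,2}$, $W_{2,3}$, $W_{1,3}$, splitting the parameter space into finitely many regions, GAP bookkeeping) matches the paper. But the logical core of your plan has a genuine gap: both of your main steps are conditional on conjectures that are \emph{open} for affine Weyl groups with unequal parameters, and the entire point of the paper's proof is to avoid them. Your ``gluing direction'' invokes Lemma~\ref{connectedcell}, whose hypothesis is that \textbf{P9}--\textbf{P11} hold for $W$ itself; in this paper they are only available for the proper (dihedral) parabolic subgroups. Likewise you extract constancy of $\ba$ on the pieces from \textbf{C1} relative to $\fC$, asserting that the bound $\ba(w)\leq \ba'_{\fC}(w)$ ``is supplied by the general induction results of \cite{jeju1,jeju2}'' --- no such result exists in those papers, and \textbf{C1} is never established here for $\tB_{2}$. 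Your ``separation direction'' rests on \textbf{P4} for $W$, again unproven in this setting. Deducing Conjecture~\ref{dec} from \textbf{P4}, \textbf{P9}--\textbf{P11}, \textbf{C1}, \textbf{C2} is precisely the content of the remark following the conjecture in Section~3.2; Theorem~\ref{decB2} asserts more, namely an unconditional verification, and your argument as written does not deliver that.

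The paper's actual route makes no use whatsoever of the $\ba$-function on $W$. For each facet one takes the left-connected components $\tc_{i}^{j}$ of the algorithm's output, their minimal-length elements $u_{i}^{j}$ and the sets $X_{u_{i}^{j}}$, verifies conditions \textbf{I1}--\textbf{I5} of the generalized induction theorem (Theorem~\ref{Gind}, a strengthening of \cite{jeju3} proved directly inside the Kazhdan--Lusztig basis, with no positivity or conjectural input --- the explicit computations such as the expansion of $T_{s_{1}s_{2}s_{1}}C_{u^{2}_{1}}$ are where the real work lies), and computes the pre-order $\preceq$ on $U$, checking property $(\dag)$. Theorem~\ref{Gind} then shows that each set $\{xu \mid u\preceq u_{i}^{j},\, x\in X_{u}\}$ is a left ideal, whence by downward induction each $\tc_{i}^{j}$ is a union of left cells (Claim~\ref{cij}); the reverse inclusion --- each $\tc_{i}^{j}$ inside a single left cell --- is obtained by the explicit Kazhdan--Lusztig polynomial methods of \cite[\S 6]{jeju1} valid uniformly on the facet, not via Lemma~\ref{connectedcell}. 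Separation of pieces sharing an $\ba_{\fC}$-value is then automatic from the ideal structure encoded in the Hasse diagram of $\preceq$, and the two-sided statement follows from Remark~\ref{utsc} and Lemma~\ref{itsc}. Your closing paragraph does sense this difficulty, and your passing mention of the induction machinery points at the right tool, but in your plan it is assigned only the auxiliary role of bounding $\ba'$; in the paper it carries the whole proof, and without it your argument establishes Theorem~\ref{decB2} only under hypotheses nobody has verified for $\tB_{2}$ with unequal parameters.
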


Let $V$ be an Euclidean space of dimension 3 with standard basis $v_{1},v_{2},v_{3}$ corresponding to the conjugacy classes $\{s_{1}\}$, $\{s_{2}\}$ and $\{s_{3}\}$ in $S$,  respectively. 
\begin{Th}
\label{B2hyp}
Conjecture \ref{SC} holds for $W$. Furthermore the essential hyperplanes are
$$\cH_{(1,0,0)},\cH_{(0,1,0)},\cH_{(0,0,1)},\cH_{(\eps,\eps,0)},\cH_{(0,\eps,\eps)},\cH_{(\eps,0,\eps)},\cH_{(\eps,\eps,\eps)},\cH_{(\eps,2\eps,\eps)}$$
where $\eps=\pm 1$.
\end{Th}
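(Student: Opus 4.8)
The plan is to deduce Theorem \ref{B2hyp} from the explicit knowledge of the cell partition of $W$ for every weight function, exactly as for Theorem \ref{G2hyp}. By Theorem \ref{decB2} (Conjecture \ref{dec}) the left and two-sided partitions $\cC_{\cL}(a,b,c)$ and $\cC_{\cLR}(a,b,c)$ are known for all positive integers $a,b,c$, the complete list being tabulated in \cite[\S 4]{comp}. Since all diagram labels of $\tB_{2}$ are even, the three simple reflections $s_{1},s_{2},s_{3}$ are pairwise non-conjugate, so the changing-signs Proposition of \S\ref{semi} permits flipping the sign of any one of $a,b,c$ independently; together with the zero-parameter Proposition this determines the partition on all of $V$ from its restriction to the closed positive octant, and forces any arrangement $\fH$ witnessing Conjecture \ref{SC} to be invariant under all coordinate sign changes. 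This is exactly what the notation $\eps=\pm1$ in the statement encodes, and it is why the coordinate hyperplanes $\cH_{(1,0,0)},\cH_{(0,1,0)},\cH_{(0,0,1)}$ must belong to $\fH$.

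First I would isolate the parameter dependence inside the positive octant. The three proper parabolic subgroups are $W_{1,2}\cong I_{2}(4)$ with parameters $(a,b)$, $W_{2,3}\cong I_{2}(4)$ with parameters $(b,c)$, and $W_{1,3}\cong I_{2}(2)$ with parameters $(a,c)$. Reading off the dihedral tables of \S\ref{dihedral}, every value taken by $\ba_{\fC}$ on a two-sided cell of $\fC$ is one of the eleven linear forms $0,a,b,c,2a-b,2b-a,2b-c,2c-b,a+c,2a+2b,2b+2c$ in $(a,b,c)$. Both the combinatorial type of $\cP_{\cL\cR,\fC}$ and the sets $\tc_{k}$ output by Steps 1--4 of the algorithm depend only on the signs of the pairwise differences of these forms and on which of them coincide; each such condition cuts out a rational hyperplane through the origin. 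I would take $\fH$ to be the sign-symmetrisation of the set of all these hyperplanes together with the three coordinate hyperplanes.

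Next I would verify the two clauses of Conjecture \ref{SC}. For clause (1), on the interior of a single $\fH$-facet $\cF$ all the forms above keep a fixed order and a fixed collision pattern, so $\cP_{\cL\cR,\fC}$, the induced $\ba_{\fC}$-order, and hence the sets $\tc_{k}$ are literally constant; by Theorem \ref{decB2} these are the cells, so $\cC_{\cL}(\cF)$ and $\cC_{\cLR}(\cF)$ are well defined. For clause (2), passing from a chamber $C\in C_{\cF}$ to the wall $\cF$ turns a strict inequality between two forms into an equality, so by the amalgamation rule of Step 3 (the condition $\tb_{i}\cap\tb_{j}\neq\emptyset$) the only effect is that certain chamber-cells coalesce; thus $\cC(\cF)$ is a common coarsening of the partitions $\cC(C)$, $C\in C_{\cF}$. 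When $\cF$ lies on a coordinate hyperplane the generators of $W_{\cF}$ have weight $0$ on $\cF$, so the zero-parameter Proposition makes $\cC(\cF)$ stable by translation by $W_{\cF}$, while for the remaining facets $W_{\cF}$ is trivial; minimality of $\cC(\cF)$ among stable common coarsenings is then checked against the tables of \cite[\S 4]{comp}.

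Finally I would pin down the essential hyperplanes and prove minimality of $\fH$. A hyperplane of $\fH$ is essential precisely when the explicit partition genuinely changes across it; crossings at which the sets $\tb_{i},\tb_{j}$ remain disjoint (the analogue of the $\overset{r'}{\leftrightarrow}$ phenomenon of Table \ref{PcG}) leave the partition unchanged and must be discarded. Sorting the collisions of the eleven forms in the positive octant, the surviving walls are $a=b$, $b=c$, $a=c$ (swaps of $a,b,c$ and of the two copies of $I_{2}(4)$), the orbit $a+c=2b$, $a+2b=c$, $a=2b+c$ (collisions among $2b-a,2c-b,a+c,2a+2b,2b+2c$), and the orbit $a=b+c$, $b=a+c$, $c=a+b$ (collisions of $2a-b$ and $a+c$ with the remaining forms). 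Since the coordinate reflections act on $V$ by flipping signs and permute these walls within the indicated orbits, their sign-symmetrisations are exactly the families $\cH_{(\eps,\eps,0)},\cH_{(0,\eps,\eps)},\cH_{(\eps,0,\eps)},\cH_{(\eps,2\eps,\eps)},\cH_{(\eps,\eps,\eps)}$ of the statement, where $\eps=\pm1$ records the independent sign on each nonzero coordinate of the normal vector. I expect the main obstacle to be exactly this last step: the case analysis over all orderings of eleven linear forms in three variables is sizeable, and the delicate point is to separate the order-changes that genuinely merge cells from those that leave the partition intact, so that no superfluous hyperplane survives and none is overlooked. Here the tabulated partitions of \cite[\S 4]{comp} and the computations with \cite{GAP} are indispensable.
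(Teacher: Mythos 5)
Your proposal is correct and follows essentially the same route as the paper: once Theorem \ref{decB2} and the explicit partitions of $W$ into cells for every parameter region (tabulated in \cite[\S 4]{comp} via the algorithm and GAP computations) are in hand, Theorem \ref{B2hyp} is verified by direct comparison of the partitions across facets, exactly as in the one-line proof the paper gives for Theorem \ref{G2hyp}. Your reduction to the positive octant by sign changes and zero parameters, and your collision analysis of the $\ba_{\fC}$-values of the dihedral parabolics, is simply a fleshed-out account of what the paper calls "straightforward once the partition of $W$ into cells is known for all choices of parameters," with the same ultimate reliance on the case-by-case tables to discard non-essential walls.
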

We refer to \cite[\S 4]{comp} for the description of the partitions $\cP_{\cLR,\fC}$ and $\cP_{\fC}$ on $\fC$ and the corresponding partitions of $W$ into cells.

\begin{Rem}
It follows from Theorem \ref{decG2} and \ref{decB2} (by a straightforward verification) that Conjecture {\bf P14} holds for $\tG_{2}$ and $\tB_{2}$.
\end{Rem}

\begin{Rem}
Let $W$ be an irreducible affine Weyl group. It seems that if one assumes that
\begin{enumerate}
\item {\bf P1}--{\bf P15} hold for all finite parabolic subgroups of $W$,
\item Conjecture \ref{dec} holds,
\item the semicontinuity conjecture holds for all finite parabolic subgroups of~$W$,
\end{enumerate}
then one could prove that the semicontinuity conjecture holds for $W$. However, the proof of this result in general looks as if it requires some deep properties of the Lusztig $\ba$-function.\\
More generally one could ask if Conjecture \ref{dec} holds for an arbitrary infinite Coxeter group $W$ and if, assuming (1)--(3), one could show that Conjecture \ref{SC} holds for $W$.
\end{Rem}


\section{Proof of Theorem \ref{decG2} and Theorem \ref{decB2}}
In this section we present the methods involved in the proof of Theorem \ref{decG2} and Theorem \ref{decB2}. We start by collecting some results of \cite{jeju3}. We then give an outline of the steps to be taken in order to prove the theorems. Finally we study an example. More details can be found in \cite[\S 3 and \S 5]{comp}.
\subsection{Prerequisites}

One of the main ingredient to prove Theorem \ref{decG2} and  \ref{decB2}  is the generalized induction of Kazhdan-Lusztig cells \cite{jeju3}. Here we will need a slightly more general version (see Remark \ref{gen}), therefore we give some details. 

Let $(W,S)$ be an arbitrary Coxeter group together with a positive weight function $L$. Let  $U\subseteq W$ be a finite subset of $W$ and let $\{X_{u}\ |\ u\in U\}$ be a collection of subsets of $W$ satisfying the following conditions
\begin{enumerate}
\item[{\bf I1}.] for all $u\in U$, we have $e\in X_{u}$,
\item[{\bf I2}.] for all $u\in U$ and $x\in X_{u}$ we have $\ell(xu)=\ell(x)+\ell(u)$,
\item[{\bf I3}.] for all $u,v\in U$ such that $u\neq v$ we have $X_{u}u\cap X_{v}v=\emptyset$,
\item[{\bf I4}.] the submodule $\cM:=\sg T_{x}C_{u}|\ u\in U,\ x\in X_{u}\sd_{\cA}\subseteq \cH$ is a left ideal.
\end{enumerate}
One can easily see that the set $\cB:=\{T_{x}C_{u}|u\in U,x\in X_{u}\}$ is a basis of $\cM$. Thus for all $y\in W$ and all $v\in U$, we can write
$$T_{y}C_{v}=\sum_{u\in U,x\in X_{u}}a_{x,u}T_{x}C_{u}\quad \text{for some $a_{x,u}\in \cA$}.$$
Let $\preceq$ be the relation on $U$ defined as follows. Let $u,v\in U$. We write $u\preceq v$ if there exist $y\in W$ and $x\in X_{u}$ such that $T_{x}C_{u}$ appears with a non-zero coefficient in the expression of $T_{y}C_{v}$ in the basis $\cB$. We still denote by $\preceq$ the pre-order induced by this relation (i.e. the transitive closure).\\
For $u,v\in U$, $x\in X_{u}$ and $y\in X_{v}$ we write $xu\sqs yv$ if $u\preceq v$ and $xu< yv$. We write $xu\sq yv$ if $xu\sqs yv$ or $x=y$ and $u=v$. Now assume that the following holds
\begin{enumerate}
\item[{\bf I5}.] for all $v\in U$, $y\in X_{v}$ we have
$$T_{y}C_{v}\equiv T_{yv}+\sum_{xu\sqs yv} a_{xu,yv}T_{x}C_{u} \mod \cH_{<0} $$
\end{enumerate}
where $a_{xu,yv}\in\cA$ and $\cH_{<0}=\oplus_{w\in W} \cA_{<0}T_{w}$.
\begin{Rem}
\label{gen}
In \cite{jeju3}, condition {\bf I5} was stated as follows
$$T_{y}C_{v}\equiv T_{yv} \mod \cH_{<0},$$
Thus Condition {\bf I5} here can be seen as a generalization of  Condition {\bf I5} in \cite{jeju3}.
\end{Rem}
Let $\fB$ be a subset of $W$. We say that $\fB$ is a left (respectively two-sided) ideal of $W$ if for all $w\in \fB$ and all $y\in W$ we have 
$$y\leq_{\cL} w\  (\text{respectively } y\leq_{\cLR} w) \Longrightarrow y\in \fB.$$
\begin{Rem}
\label{utsc}
Note that a left (respectively two-sided) ideal of $W$ is a union of left (respectively two-sided) cells. Moreover, a left ideal which is stable by taking the inverse is a two-sided ideal. 
\end{Rem}

\begin{Th}
\label{Gind}
Let $U$ be a subset of $W$ and $\{X_{u}|u\in U\}$ be a collection of subsets of $W$ satisfying conditions {\bf I1--I5}. Let $\mathcal{U}\subseteq U$ be such that for all $v\in \cU$, $u\preceq v$ implies $u\in\cU$.
Then, the set 
$$\fB:=\{yv| v\in \cU,y\in X_{v}\}$$
is a left ideal of $W$. In particular $\fB$ is a union of left cells.
\end{Th}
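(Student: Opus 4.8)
The plan is to exhibit $\fB$ as the index set of a Kazhdan--Lusztig basis of a bar-invariant left ideal of $\cH$, from which the left-ideal property of $\fB$ will follow formally.

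First I would introduce $\cM_{\cU}:=\sum_{v\in\cU}\cH C_{v}$. By construction this is a left ideal of $\cH$, and since $\overline{C_{v}}=C_{v}$ and the bar involution is a ring homomorphism of $\cH$, it is stable under bar. The first thing to verify is that $\cM_{\cU}=\sg T_{y}C_{v}\mid v\in\cU,\ y\in X_{v}\sd_{\cA}$. The inclusion $\supseteq$ is clear from {\bf I1}. For $\subseteq$, note that for any $h\in\cH$ and $v\in\cU$ the product $hC_{v}$ lies in $\cM$ by {\bf I4}, and by the definition of $\preceq$ it expands in the basis $\cB$ using only vectors $T_{x}C_{u}$ with $u\preceq v$; since $\cU$ is $\preceq$-downward closed, every such $u$ lies in $\cU$, so $hC_{v}\in\cM_{\cU}$. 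This is the \emph{only} place where the hypothesis on $\cU$ is used, and it shows at once that $\cM_{\cU}$ is a genuine left ideal with $\cA$-basis $\{T_{y}C_{v}\}$; by {\bf I2}--{\bf I3} the map $(y,v)\mapsto yv$ is injective, so this basis is indexed exactly by $\fB$.

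Next I would show that $C_{w}\in\cM_{\cU}$ for every $w\in\fB$, by induction along the partial order $\sq$ (a refinement of the length). Writing $w=yv$, condition {\bf I5} produces the element
$$\eta_{w}:=T_{y}C_{v}-\sum_{xu\sqs yv}a_{xu,yv}\,T_{x}C_{u}\in\cM_{\cU},\qquad \eta_{w}\equiv T_{w}\ \mod\ \cH_{<0},$$
where each $xu\sqs yv$ again lies in $\fB$ because $u\preceq v$ forces $u\in\cU$. Thus $\eta_{w}$ is an element of the ideal $\cM_{\cU}$ whose class in $\cH/\cH_{<0}$ is $T_{w}$. Running the usual Kazhdan--Lusztig correction, one removes the failure of bar-invariance $\eta_{w}-\overline{\eta_{w}}$ by subtracting an $\cA_{<0}$-combination of the elements $C_{w'}$ with $w'\sqs w$; these lie in $\cM_{\cU}$ by the induction hypothesis, so the correction stays inside $\cM_{\cU}$ and does not change the class modulo $\cH_{<0}$. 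The result is a bar-invariant element of $\cM_{\cU}$ congruent to $T_{w}$ modulo $\cH_{<0}$, which by the uniqueness in \cite[Theorem 5.2]{bible} equals $C_{w}$. Hence $C_{w}\in\cM_{\cU}$, and since the $C_{w}$ $(w\in\fB)$ are linearly independent and $\cM_{\cU}$ is free of rank $|\fB|$, we obtain $\cM_{\cU}=\sg C_{w}\mid w\in\fB\sd_{\cA}$. The theorem then follows: for $w\in\fB$ and $s\in S$ we have $C_{s}C_{w}\in\cM_{\cU}=\sg C_{z}\mid z\in\fB\sd_{\cA}$, so $h_{s,w,z}\neq0$ forces $z\in\fB$. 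Thus $z\leftarrow_{\cL}w$ with $w\in\fB$ implies $z\in\fB$; taking the transitive closure shows $y\leq_{\cL}w\in\fB\Rightarrow y\in\fB$, i.e. $\fB$ is a left ideal, which by Remark~\ref{utsc} is a union of left cells.

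I expect the main obstacle to be the support bookkeeping inside the induction, namely the assertion that the bar-defect $\eta_{w}-\overline{\eta_{w}}$ is a combination of the $C_{w'}$ with $w'\sqs w$ \emph{lying in} $\fB$, rather than of arbitrary $C_{z}$ with $z<w$. Equivalently, one must show that the standard basis $\{T_{y}C_{v}\}$ and the Kazhdan--Lusztig basis $\{C_{w}\}$ are unitriangularly related along $(\fB,\sq)$, compatibly with the $\cA_{<0}$-filtration of $\cH$. This matching of the order $\sq$ (built from $\preceq$ and the Bruhat order) with the $\cH_{<0}$-filtration is precisely what the strengthened condition {\bf I5} is designed to guarantee, and controlling the $T$-supports that enter uses the length-additivity {\bf I2} and the disjointness {\bf I3}; this is the technical heart, carried out following the methods of \cite{jeju3}.
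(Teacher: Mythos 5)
Your proposal is correct and takes essentially the same route as the paper: both arguments construct bar-invariant elements of the module triangular along $\sq$ (the paper's $\tC_{yv}$, built as in \cite[Theorem 3.2]{jeju3}, versus your corrected $\eta_{w}$), identify them with the genuine $C_{yv}$ modulo $\sq$-lower terms using condition {\bf I5} together with the uniqueness of the Kazhdan--Lusztig basis, exploit the $\preceq$-downward closure of $\cU$ in exactly the same place, and conclude by expanding $C_{s}C_{w}$ inside the left ideal $\cM_{\cU}$. One small caution: since $\fB$ is infinite, your spanning claim $\cM_{\cU}=\sg C_{w}\mid w\in\fB\sd_{\cA}$ does not follow from linear independence plus a rank count, but it does follow from the unitriangular relation along $(\fB,\sq)$ that you yourself identify in the final paragraph, since $\sq$ refines the Bruhat order and hence has finite lower intervals.
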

\begin{proof}
Arguing in the same way as in the proof of \cite[Theorem 3.2]{jeju3}, for all $v\in U$ and $y\in X_{v}$ there exists a unique family of polynomials $p^*_{xu,yv}\in\cA_{<0}$ such that
$$\tC_{yv}:=T_{y}C_{v}+\ind{xu\sqs yv}{\sum_{u\in U, x\in X_{u}}}p^{*}_{xu,yv}T_{x}C_{u}$$
is stable under the involution $\ \bar{ } \ $. It is clear that the set $\tilde{\cB}=\{\tC_{yv}| v\in U, y\in X_{v}\}$ forms a basis of $\cM$ and the above formula describes the bases change from $\cB=\{T_{y}C_{v}|v\in U,y\in X_{v}\}$ to $\tilde{\cB}$. By an easy induction on the relation $\sq$ we can invert this formula. Thus there exist some polynomials $q^{*}_{xu,yv}\in\cA_{<0}$  such that
$$T_{y}C_{v}= \tC_{yv}+\sum_{xu\sqs yv} q^{*}_{xu,yv} \tC_{xu}.$$
We want to prove by induction on $\sq$ that 
\begin{equation*}
\tC_{yv}=C_{yv}+\text{an $\cA$-linear combination of $C_{xu}$ where $xu\sqs yv$} \tag{$\ast$}.
\end{equation*}
For $u\in U$ it is clear since $\tC_{u}=C_{u}$. Now let $v\in U$ and $y\in X_{v}$. We assume that for all $u\in U$ and $x\in X_{u}$ such that $xu\sqs yv$ statement $(\ast)$ holds. By condition {\bf I5} we have
$$\tC_{yv}\equiv T_{yv}+\sum_{xu\sqs yv} a_{xu,yv}T_{x}C_{u} \mod \cH_{<0}$$
for some $a_{xu,yv}\in\cA$. From there, using the inversion formula we obtain
$$\tC_{yv}\equiv T_{yv}+\sum_{xu\sqs yv} b_{xu,yv} \tC_{xu} \mod \cH_{<0}$$
for some $b_{xu,yv}\in\cA$. Using the induction hypothesis we get
$$\tC_{yv}\equiv T_{yv}+\sum_{xu\sqs yv} b'_{xu,yv} C_{xu} \mod \cH_{<0}$$
for some $b'_{xu,yv}\in \cA$. Furthermore, since $C_{xu}\equiv T_{xu} \mod \cH_{<0}$, we may assume that $b'_{xu,yv}\in\nZ[v]$. Let $b''_{xu,yv}$ be the unique element of $\cA$ such that
$$b''_{xu,yv}=b'_{xu,yv}\mod \cA_{\leq 0}\quad\text{and}\quad \bar{b''}_{xu,yv}=b''_{xu,yv}.$$
Then we have
$$\tC_{yv}-C_{yv}-\sum_{xu\sqs yv} b''_{xu,yv} C_{xu}\equiv 0 \mod \cH_{<0}.$$
Furthermore, the left hand-side is stable under the involution $\bar{}\ $. Hence, using \cite[\S 5.(e) ]{bible}, it is equal to $0$ and ($\ast$) follows.\\
Now let $\cU$ be a subset of $U$ such that for all $v\in\cU$, $u\preceq v$ implies $u\in\cU$. We want to show that the set 
$$\fB:=\{yv| v\in \cU,y\in X_{v}\}$$
is a left ideal of $W$. One can see that 
$$\cM_{\cU}:=\sg T_{y}C_{v}\ |\ v\in \cU, y\in X_{v}\sd_{\cA}$$
is a left ideal of $\cH$. Furthermore, the set $\fB_{\cU}=\{T_{y}C_{v}| v\in \cU,y\in X_{v}\}$ is a basis of $\cM_{\cU}$. Using $(\ast)$ we see that the set $\{C_{yv}| v\in \cU, y\in X_{v}\}$ is also a basis of $\cM_{\cU}$. Let $w=yv\in\cB$ where $v\in\cU$ and $y\in  X_{v}$. Let $z\in W$ be such that $z\leq_{\cL} w$. We may assume that there exists $s\in S$ such that $C_{z}$ appears with a non-zero coefficient in the expression of $C_{s}C_{w}$ in the Kazhdan-Lusztig basis. Now since $C_{w}\in\cM_{\cU}$ and $\cM_{\cU}$ is a left ideal we have
$$C_{s}C_{w}=\sum_{u\in \cU, x\in X_{u}} \cA C_{xu}.$$
Thus there exist $u\in \cU$ and $x\in X_{u}$ such that $z=xu$ and $z\in\cB$ as desired.
\end{proof}

The following lemma is useful to find the partition of $W$ into two-sided cells once the partition into left cells is known.
\begin{Lem}
\label{itsc}
Let $T$ be a union of left cells which is stable by taking the inverse. Let $T=\cup \ T_{i}$ ($1\leq i\leq N$) be the decomposition of $T$ into left cells. Assume that for all $i,j\in\{1,...,N\}$ we have
\begin{equation*}
T_{i}^{-1}\cap T_{j}\neq \emptyset
\end{equation*}
Then $T$ is included in a two-sided cell.
\end{Lem}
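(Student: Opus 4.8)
The plan is to reduce the statement to a single elementary observation about how a left cell and a right cell that meet must sit inside a common two-sided cell, and then to use the right cells $T_i^{-1}$ as bridges between the left cells $T_i$. First I would record two facts that are immediate from the definitions of Section~2.2: (i) since $\leftarrow_{\cL}$ and $\leftarrow_{\cR}$ are both allowed steps in the definition of $\leq_{\cLR}$, we have $x\leq_{\cL}y\Rightarrow x\leq_{\cLR}y$ and $x\leq_{\cR}y\Rightarrow x\leq_{\cLR}y$, so both $\sim_{\cL}$ and $\sim_{\cR}$ refine $\sim_{\cLR}$; and (ii) from $x\leq_{\cL}y\Leftrightarrow x^{-1}\leq_{\cR}y^{-1}$ the inverse map sends left cells to right cells, so each $T_i^{-1}$ is a right cell and (as $T$ is stable under inversion) $T=\bigcup_i T_i^{-1}$ is also the decomposition of $T$ into right cells.

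The key step is the following sublemma: if a left cell $L$ and a right cell $R$ have an element $w$ in common, then $L\cup R$ is contained in one two-sided cell. Indeed, for any $x\in L$ we have $x\sim_{\cL}w$, hence $x\sim_{\cLR}w$ by (i); and for any $y\in R$ we have $y\sim_{\cR}w$, hence $y\sim_{\cLR}w$. By transitivity of $\sim_{\cLR}$, every element of $L\cup R$ is $\sim_{\cLR}$-equivalent to $w$, so $L\cup R$ lies in the two-sided cell of $w$.

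To finish, I would apply this with $L=T_j$ and $R=T_i^{-1}$: the hypothesis $T_i^{-1}\cap T_j\neq\emptyset$ provides the common element, so $T_j$ and $T_i^{-1}$ lie in a common two-sided cell for all $i,j$. Fixing the single index $i=1$ and letting $j$ range over $1,\dots,N$ shows that every left cell $T_j$ lies in the same two-sided cell as the right cell $T_1^{-1}$; hence all the $T_j$ lie in one two-sided cell. Since each $T_j$ is itself contained in a two-sided cell (being a left cell) and $T=\bigcup_j T_j$, the whole of $T$ is contained in a single two-sided cell.

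I do not expect a genuine obstacle here: the argument uses only the definitions of the cell preorders and the behaviour of the inverse map, and in particular it avoids any appeal to the deeper conjectures {\bf P1}--{\bf P15} (notably {\bf P14}). The only points that require care are getting the direction of the refinements $\sim_{\cL},\sim_{\cR}\Rightarrow\sim_{\cLR}$ right and remembering that inversion interchanges left and right cells; the conceptual content is simply the recognition that the right cells $T_i^{-1}$ furnish the bridges linking the left cells $T_i$ across the two-sided relation.
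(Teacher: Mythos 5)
Your proof is correct. Note that the paper itself gives no argument for this lemma: it defers entirely to \cite[Lemma 5.3]{jeju3}, and the argument there is the same kind of elementary bridging from the definitions that you use, exploiting that each $T_i^{-1}$ is a right cell and that $\sim_{\cL}$ and $\sim_{\cR}$ both refine $\sim_{\cLR}$. Your ``hub'' arrangement through the single right cell $T_1^{-1}$ is in fact marginally tighter than what the statement demands: you only use the hypothesis $T_1^{-1}\cap T_j\neq\emptyset$ for the one index $i=1$, and the stability of $T$ under inversion plays no role in your core argument (in the cited proof it is used, since one first obtains $x^{-1}\sim_{\cLR}y$ and then replaces $x^{-1}$ by $x$ via stability, whereas your route links $x$ to $y$ directly through the elements $w_j\in T_1^{-1}\cap T_j$, which satisfy $w_j\sim_{\cR}w_{j'}$). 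Your remark that the argument avoids {\bf P14} is exactly right and worth making explicit, since a naive attempt to relate $z$ and $z^{-1}$ directly would require it; the right-cell bridge is precisely what circumvents that.
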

The proof can be found in \cite[Lemma 5.3]{jeju3}. 

\subsection{General methods}
Let $W$ be an affine Weyl group of rank 2. Let $\cF$ be a facet of the hyperplane arrangement described in Theorem \ref{G2hyp} if $W$ is of type $\tG_{2}$ or in Theorem \ref{B2hyp} if $W$ is of type $\tB_{2}$. 
\begin{Rem}
We will have to compute some Kazhdan-Lusztig polynomials for all weight functions $L\in\cF$. This is done using \cite[Proposition 3.3]{jeju1}.
\end{Rem}
Let $\tc_{0},\ldots,\tc_{n}$ be the partition of $W$ (associated to $\cF$) obtained using the algorithm presented in Section 3. We denote by
\begin{itemize}
\item $\tc_{i}^{j}$ the left-connected components lying in $\tc_{i}$;
\item $u_{i}^{j}$ the element of minimal length lying in $\tc_{i}^{j}$;
\item $U$ the set which consists of all the $u_{i}^{j}$.
\end{itemize}
Let $u_{i}^{j}\in U$. We set
$$X_{u_{i}^{j}}:=\{w\in W| wu^{j}_{i}\in \tc^{j}_{i}\}.$$
The first step is to show that $\tc_{i}^{j}$ is included in a left cell for all $i,j$ and for all weight functions $L\in\cF$. Methods for dealing with this problem are presented in \cite[\S 6]{jeju1}. \\
The next step is to show that $c_{i}^{j}$ is a union of left cells for all weight functions $L\in\cF$. This is done as follows.
\begin{enumerate}
\item Check that  the set $U$ and the collection of subsets $X_{u}$ satisfy Conditions {\bf I1}--{\bf I4};
\item Determine the pre-order $\preceq$ on $U$. We will find that
$$\begin{array}{lc}
(\dag) &  u\preceq u_{i}^{j}\Longrightarrow u=u_{i}^{j} \text{ or }u=u_{i'}^{j'} \text{ where } i'>i.
\end{array}$$
\item Check that Condition {\bf I5} is satisfied for all $u\in U$ and all $x\in X_{u}$.
\end{enumerate}

\begin{Cl}
\label{cij}
Assume that (1)--(3) hold. Then, the sets $\tc_{i}^{j}$ are unions of lefts cells for all $i,j$.  
\end{Cl}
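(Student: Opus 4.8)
The plan is to apply the generalized induction theorem (Theorem~\ref{Gind}) twice, for two suitably nested choices of the subset $\cU\subseteq U$, and then to recover each $\tc_{i}^{j}$ as the set-theoretic difference of the two resulting left ideals. The only genuine input is the shape $(\dag)$ of the pre-order $\preceq$; everything else is formal bookkeeping.

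First I would record the elementary observation that $\tc_{i}^{j}=X_{u_{i}^{j}}\,u_{i}^{j}$ for every $i,j$. Indeed, right multiplication by $u_{i}^{j}$ is a bijection of $W$, so the definition $X_{u_{i}^{j}}=\{w\in W\mid w u_{i}^{j}\in \tc_{i}^{j}\}$ gives immediately $X_{u_{i}^{j}}u_{i}^{j}=\tc_{i}^{j}$. Consequently, for any subset $\cU\subseteq U$ the set $\fB=\{yv\mid v\in\cU,\ y\in X_{v}\}$ appearing in Theorem~\ref{Gind} is exactly $\bigcup_{u_{i'}^{j'}\in\cU}\tc_{i'}^{j'}$, and this union is disjoint since, by the Claim proved above, the $\tc_{k}$ are the pairwise disjoint connected components of the sets $B_{i}$.

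Now fix $i$ and $j$ and set
$$\cU:=\{u_{i}^{j}\}\cup\{u_{i'}^{j'}\mid i'>i\}\mand \cU':=\{u_{i'}^{j'}\mid i'>i\},$$
where in each case $j'$ ranges over all admissible values. The key step is to check that both $\cU$ and $\cU'$ are downward closed for $\preceq$, that is, that $v\in\cU$ and $u\preceq v$ force $u\in\cU$, and likewise for $\cU'$. This is precisely what property $(\dag)$ from hypothesis (2) provides: if $v=u_{i'}^{j'}$ with $i'\geq i$ and $u\preceq v$, then $(\dag)$ yields either $u=v$ or $u=u_{i''}^{j''}$ with $i''>i'\geq i$, and in either case $u\in\cU$ (and $u\in\cU'$ when $i'>i$). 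Since hypotheses (1) and (3) guarantee that $U$ and $\{X_{u}\}$ satisfy conditions {\bf I1}--{\bf I5}, Theorem~\ref{Gind} applies to both $\cU$ and $\cU'$, showing that
$$\fB=\tc_{i}^{j}\cup\bigcup_{i'>i}\tc_{i'}\mand \fB'=\bigcup_{i'>i}\tc_{i'}$$
are left ideals of $W$, hence unions of left cells.

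Finally, since $\cU'\subseteq\cU$ we have $\fB'\subseteq\fB$, and the disjointness of the $\tc_{k}$ recorded above gives $\fB\setminus\fB'=\tc_{i}^{j}$. As left cells partition $W$, the difference of two unions of left cells, one contained in the other, is again a union of left cells; therefore $\tc_{i}^{j}$ is a union of left cells, as required (the degenerate case where $\cU'=\emptyset$, so $\fB=\tc_{i}^{j}$, needs no separate treatment). The substantive difficulty in this circle of ideas lies entirely upstream of the present Claim, namely in verifying {\bf I1}--{\bf I5} and, above all, in establishing the exact form $(\dag)$ of the pre-order $\preceq$; once these are in hand, the extraction of the individual left-connected components $\tc_{i}^{j}$ is the purely formal telescoping argument above.
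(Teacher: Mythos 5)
Your proposal is, at its core, the paper's own argument in a slightly different packaging: the paper likewise combines Theorem~\ref{Gind} with $(\dag)$ and recovers $\tc_{i}^{j}$ by a set difference, but it proceeds by induction on $i$, applying Theorem~\ref{Gind} to the principal down-set $V=\{v\in U\mid v\preceq u_{i}^{j}\}$ and then subtracting the individual components $\tc_{i'}^{j'}$ with $u_{i'}^{j'}\in V\setminus\{u_{i}^{j}\}$, which are unions of left cells by the inductive hypothesis. Your variant, which subtracts one left ideal from another nested inside it ($\fB'\subseteq\fB$), eliminates the induction altogether --- a small but genuine simplification --- and your preliminary observations are sound: $\tc_{i}^{j}=X_{u_{i}^{j}}u_{i}^{j}$ holds by definition, and the pairwise disjointness you invoke also follows directly from condition {\bf I3}.

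The one point you must repair is the direction of the inequality, though the fault lies in the paper's statement of $(\dag)$ rather than with you. As printed, $(\dag)$ says $u\preceq u_{i}^{j}$ forces $u=u_{i'}^{j'}$ with $i'>i$; but the paper's own proof of this Claim reads ``if $u_{i'}^{j'}\in V'$ then $i'<i$ by $(\dag)$'', and the worked example confirms that $i'<i$ is the true shape of the pre-order: the elements of $U_{0}$ are $\preceq$-minimal (each $\cM_{\{u\}}$, $u\in U_{0}$, is already a left ideal), ideals such as $\cM_{\{u^{2}_{1},u^{5}_{0}\}}$ show $u^{5}_{0}\preceq u^{2}_{1}$, and the Hasse diagram places the $u_{0}^{j}$ at the bottom and $u_{8}^{1}$ (the identity, which dominates every element for $\leq_{\cL}$) at the top. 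So $(\dag)$ should read $i'<i$, and with your sets as literally written downward closure fails (e.g.\ $u^{5}_{0}\preceq u^{2}_{1}$ but $u^{5}_{0}\notin\{u^{2}_{1}\}\cup\{u_{i'}^{j'}\mid i'>1\}$), so Theorem~\ref{Gind} would not apply. The repair is mechanical: take $\cU=\{u_{i}^{j}\}\cup\{u_{i'}^{j'}\mid i'<i\}$ and $\cU'=\{u_{i'}^{j'}\mid i'<i\}$; both are downward closed under the corrected $(\dag)$, and the rest of your telescoping argument goes through verbatim, with the degenerate case now $i=0$ rather than $i=n$.
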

\begin{proof}
We proceed by induction on $i$. We can apply Theorem \ref{Gind} to the set $\{u_{0}^{j}\}$ for all $j$ and we obtain that $\tc_{0}^{j}$ is a union of left cells for all $j$. Let $i>0$ and consider the set $V=\{v\in U| v\preceq u_{i}^{j}\}$ for some $j$. Let $V'=\{v\in U| v\preceq u_{i}^{j}, v\neq u_{i}^{j}\}$. Note that if $u_{i'}^{j'}\in V'$ then $i'<i$ by ($\dag$). Applying Theorem \ref{Gind} to $V$ we obtain that
$$\{xu| u\in V, x\in X_{u}\}=\tc_{i}^{j}\cup \big(\bigcup_{u_{i'}^{j'}\in V'}  \tc_{i'}^{j'} \big)$$
is a union of left cells. Thus, since $\tc_{i'}^{j'}$ is a union of left cells for all $i'<i$ by induction, it follows that $\tc_{i}^{j}$ is a union of left cells as required. 
\end{proof}
The fact that the sets $\tc_{i}$ are the two-sided cells of $W$ will follow easily from Remark \ref{utsc} and Lemma~\ref{itsc}.
\begin{Rem}
It follows from (1)--(3) that {\bf P} holds holds for $W$ (see Remark \ref{propp}). Indeed let $x,y\in W$ be such that $x\leq_{\cL} y$ and $x\sim_{\cLR}y$. Since $x\sim_{\cLR} y$ there exists $i$ such that $x,y\in\tc_{i}$. Let $j$ be such that $y\in\tc_{i}^{j}$. Now since $x\leq_{\cL} y$, using $(\dag)$ and Theorem \ref{Gind}, we get that either $x\in\tc_{i}^{j}$ or $x\in \tc_{i'}^{j'}$ where $i'<i$. But $x\in\tc_{i}$ therefore $x\in\tc_{i}^{j}$ and $x\sim_{\cL} y$ as required.
\end{Rem}
\subsection{Example}
Let $W$ be of type $\tB_{2}$ with diagram and weight function given by
\begin{center}
\begin{picture}(150,32)
\put( 40, 10){\circle{10}}
\put( 44.5,  8){\line(1,0){31.75}}
\put( 44.5,  12){\line(1,0){31.75}}
\put( 81, 10){\circle{10}}
\put( 85.5,  8){\line(1,0){29.75}}
\put( 85.5,  12){\line(1,0){29.75}}
\put(120, 10){\circle{10}}
\put( 38, 20){$a$}
\put( 78, 20){$b$}
\put(118, 20){$c$}
\put( 38, -3){$s_{1}$}
\put( 78, -3){$s_{2}$}
\put(118,-3){$s_{3}$}
\end{picture}
\end{center}
where $(a,b,c)$ satisfy the following equations
$$b>c,\ a-2b+c>0,\ a-b-c<0.$$
In other words, if we keep the setting of Section \ref{B2}, $(a,b,c)\in V$ lies in the chamber defined by the hyperplanes
$$\cH_{(1,-2,1)},\cH_{(0,1,-1)},\cH_{(1,-1,-1)}.$$
In that case, applying Conjecture \ref{dec}, we obtain the following partition of $W$, where the left cells are formed by the alcoves lying in the same connected component after removing the thick lines. The two-sided cells are the unions of all the left cells whose names share the same subscript. The alcove corresponding to the identity is denoted by $\tc_{8}^{1}$. \\

\begin{center}
\psset{unit=1cm}
\begin{pspicture}(-4,-4)(4,4)




\psline[linewidth=.7mm](0,0)(0,1)
\psline[linewidth=.7mm](0,1)(.5,.5)
\psline[linewidth=.7mm](0.5,0.5)(0,0)

\psline[linewidth=.7mm](0,0)(0,-4)
\psline[linewidth=.7mm](0,0)(-4,-4)

\psline[linewidth=.7mm](0,2)(0,4)
\psline[linewidth=.7mm](0,2)(-2,4)

\psline[linewidth=.7mm](1,3)(1,4)
\psline[linewidth=.7mm](1,3)(2,4)

\psline[linewidth=.7mm](1,1)(4,1)
\psline[linewidth=.7mm](1,1)(4,4)

\psline[linewidth=.7mm](-1,1)(-4,1)
\psline[linewidth=.7mm](-1,1)(-4,4)

\psline[linewidth=.7mm](-2,0)(-4,0)
\psline[linewidth=.7mm](-2,0)(-4,-2)

\psline[linewidth=.7mm](1,-1)(1,-4)
\psline[linewidth=.7mm](1,-1)(4,-4)

\psline[linewidth=.7mm](2,0)(4,0)
\psline[linewidth=.7mm](2,0)(4,-2)

\psline[linewidth=.7mm](0,-1)(.5,-.5)
\psline[linewidth=.7mm](0,0)(.5,-.5)
\psline[linewidth=.7mm](.5,-.5)(1,-1)
\psline[linewidth=.7mm](.5,.5)(4,-3)
\psline[linewidth=.7mm](1,0)(2,0)

\psline[linewidth=.7mm](.5,.5)(1,1)
\psline[linewidth=.7mm](0,1)(1,1)
\psline[linewidth=.7mm](.5,1.5)(3,4)
\psline[linewidth=.7mm](.5,1.5)(0,2)
\psline[linewidth=.7mm](1,2)(1,3)
\psline[linewidth=.7mm](.5,1.5)(1,1)

\psline[linewidth=.7mm](0,1)(0,2)
\psline[linewidth=.7mm](0,1)(-3,4)
\psline[linewidth=.7mm](-.5,1.5)(-1,1)
\psline[linewidth=.7mm](-1,1)(0,1)

\psline[linewidth=.7mm](-.5,.5)(-1,1)
\psline[linewidth=.7mm](-.5,.5)(-4,-3)
\psline[linewidth=.7mm](-1,0)(-4,0)
\psline[linewidth=.7mm](-.5,.5)(0,0)

\rput(-.8,-2.5){{\small $ {\bf \tc^1_{0}}$}}
\rput(1.8,-2.5){{\small $ {\bf \tc^2_{0}}$}}
\rput(3.2,-.5){{\small $ {\bf \tc^3_{0}}$}}
\rput(3.2,2.5){{\small $ {\bf \tc^4_{0}}$}}
\rput(1.2,3.5){{\small $ {\bf \tc^5_{0}}$}}
\rput(-.8,3.5){{\small $ {\bf \tc^6_{0}}$}}
\rput(-2.8,1.5){{\small $ {\bf \tc^7_{0}}$}}
\rput(-3.2,-.5){{\small $ {\bf \tc^8_{0}}$}}

\rput(3.2,-1.5){{\small $ {\bf \tc^1_{1}}$}}
\rput(2.2,3.5){{\small $ {\bf \tc^2_{1}}$}}
\rput(-1.8,3.5){{\small $ {\bf \tc^3_{1}}$}}
\rput(-2.8,-1.5){{\small $ {\bf \tc^4_{1}}$}}

\rput(.2,-2.5){{\small $ {\bf \tc^1_{2}}$}}
\rput(2.2,.5){{\small $ {\bf \tc^2_{2}}$}}
\rput(.2,2.5){{\small $ {\bf \tc^3_{2}}$}}
\rput(-1.8,.5){{\small $ {\bf \tc^4_{2}}$}}

\rput(.2,-.5){{\small $ {\bf \tc^1_{3}}$}}

\rput(1.2,-.5){{\small $ {\bf \tc^1_{4}}$}}
\rput(1.2,1.5){{\small $ {\bf \tc^2_{4}}$}}
\rput(-1.2,1.5){{\small $ {\bf \tc^3_{4}}$}}
\rput(-1.2,-.5){{\small $ {\bf \tc^4_{4}}$}}

\rput(-.5,1.2){{\small $ {\bf \tc^1_{5}}$}}

\rput(.2,1.5){{\small $ {\bf \tc^1_{6}}$}}
\rput(-.2,.5){{\small $ {\bf \tc^2_{6}}$}}

\rput(.5,.8){{\small $ {\bf \tc^1_{7}}$}}

\rput(.2,.5){{\small $ {\bf \tc^1_{8}}$}}

\multido{\nx=-4+1}{9}{
\psline(\nx,-4)(\nx,4)}

\multido{\ny=-4+1}{9}{
\psline(-4,\ny)(4,\ny)}


\psline(-4,-3)(-3,-4)
\psline(-3,-3)(-2,-4)
\psline(-2,-3)(-1,-4)
\psline(-1,-3)(-0,-4)
\psline(0,-3)(1,-4)
\psline(1,-3)(2,-4)
\psline(2,-3)(3,-4)
\psline(3,-3)(4,-4)

\psline(-4,-2)(-3,-3)
\psline(-3,-2)(-2,-3)
\psline(-2,-2)(-1,-3)
\psline(-1,-2)(-0,-3)
\psline(0,-2)(1,-3)
\psline(1,-2)(2,-3)
\psline(2,-2)(3,-3)
\psline(3,-2)(4,-3)

\psline(-4,-1)(-3,-2)
\psline(-3,-1)(-2,-2)
\psline(-2,-1)(-1,-2)
\psline(-1,-1)(-0,-2)
\psline(0,-1)(1,-2)
\psline(1,-1)(2,-2)
\psline(2,-1)(3,-2)
\psline(3,-1)(4,-2)

\psline(-4,0)(-3,-1)
\psline(-3,0)(-2,-1)
\psline(-2,0)(-1,-1)
\psline(-1,0)(-0,-1)
\psline(0,0)(1,-1)
\psline(1,0)(2,-1)
\psline(2,0)(3,-1)
\psline(3,0)(4,-1)

\psline(-4,4)(-3,3)
\psline(-3,4)(-2,3)
\psline(-2,4)(-1,3)
\psline(-1,4)(-0,3)
\psline(0,4)(1,3)
\psline(1,4)(2,3)
\psline(2,4)(3,3)
\psline(3,4)(4,3)

\psline(-4,3)(-3,2)
\psline(-3,3)(-2,2)
\psline(-2,3)(-1,2)
\psline(-1,3)(-0,2)
\psline(0,3)(1,2)
\psline(1,3)(2,2)
\psline(2,3)(3,2)
\psline(3,3)(4,2)

\psline(-4,2)(-3,1)
\psline(-3,2)(-2,1)
\psline(-2,2)(-1,1)
\psline(-1,2)(-0,1)
\psline(0,2)(1,1)
\psline(1,2)(2,1)
\psline(2,2)(3,1)
\psline(3,2)(4,1)

\psline(-4,1)(-3,0)
\psline(-3,1)(-2,0)
\psline(-2,1)(-1,0)
\psline(-1,1)(-0,0)
\psline(0,1)(1,0)
\psline(1,1)(2,0)
\psline(2,1)(3,0)
\psline(3,1)(4,0)

\psline(-4,3)(-3,4)
\psline(-3,3)(-2,4)
\psline(-2,3)(-1,4)
\psline(-1,3)(-0,4)
\psline(0,3)(1,4)
\psline(1,3)(2,4)
\psline(2,3)(3,4)
\psline(3,3)(4,4)

\psline(-4,2)(-3,3)
\psline(-3,2)(-2,3)
\psline(-2,2)(-1,3)
\psline(-1,2)(-0,3)
\psline(0,2)(1,3)
\psline(1,2)(2,3)
\psline(2,2)(3,3)
\psline(3,2)(4,3)

\psline(-4,1)(-3,2)
\psline(-3,1)(-2,2)
\psline(-2,1)(-1,2)
\psline(-1,1)(-0,2)
\psline(0,1)(1,2)
\psline(1,1)(2,2)
\psline(2,1)(3,2)
\psline(3,1)(4,2)

\psline(-4,0)(-3,1)
\psline(-3,0)(-2,1)
\psline(-2,0)(-1,1)
\psline(-1,0)(-0,1)
\psline(0,0)(1,1)
\psline(1,0)(2,1)
\psline(2,0)(3,1)
\psline(3,0)(4,1)

\psline(-4,-1)(-3,0)
\psline(-3,-1)(-2,0)
\psline(-2,-1)(-1,0)
\psline(-1,-1)(-0,0)
\psline(0,-1)(1,0)
\psline(1,-1)(2,0)
\psline(2,-1)(3,0)
\psline(3,-1)(4,0)

\psline(-4,-2)(-3,-1)
\psline(-3,-2)(-2,-1)
\psline(-2,-2)(-1,-1)
\psline(-1,-2)(-0,-1)
\psline(0,-2)(1,-1)
\psline(1,-2)(2,-1)
\psline(2,-2)(3,-1)
\psline(3,-2)(4,-1)

\psline(-4,-3)(-3,-2)
\psline(-3,-3)(-2,-2)
\psline(-2,-3)(-1,-2)
\psline(-1,-3)(-0,-2)
\psline(0,-3)(1,-2)
\psline(1,-3)(2,-2)
\psline(2,-3)(3,-2)
\psline(3,-3)(4,-2)

\psline(-4,-4)(-3,-3)
\psline(-3,-4)(-2,-3)
\psline(-2,-4)(-1,-3)
\psline(-1,-4)(-0,-3)
\psline(0,-4)(1,-3)
\psline(1,-4)(2,-3)
\psline(2,-4)(3,-3)
\psline(3,-4)(4,-3)

\rput(0,-4.5){{\small  {\sc Figure 12.} Partition of $W$}}



\end{pspicture}

\end{center}

\vspace{.9cm}
Using the same methods as in \cite[\S 6]{jeju1} one can show that each of the pieces of this partition is included in a left cell.\\

Recall the definition of $u_{i}^{j}$, $U$ and $X_{u}$ (for $u\in U$) in the last section. We denote by $U_{i}$ the subset of $U$ which consists of all the elements $u_{i}^{j}$. Finally, for any subset $V$ of $U$ we set
$$\cM_{V}:=\sg T_{x}C_{v}|v\in V, x\in X_{v}\sd_{\cA}.$$
Our first task is to determine the pre-order $\preceq$ on $U$.
\begin{Cl}
Let $u\in U_{0}$. Then $\cM_{\{u\}}$ is a left ideal of $\cH$. 
\end{Cl}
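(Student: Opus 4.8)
The plan is to prove the sharper statement that $\cM_{\{u\}}$ coincides with the principal left ideal $\cH C_{u}$; since the latter is manifestly a left ideal, this suffices. The whole argument rests on the observation that each $u\in U_{0}$ begins, on the left, with the longest element of the heaviest finite parabolic subgroup, so that $C_{u}$ is a left eigenvector for the relevant $T_{s}$.

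First I would pin down $u$. By the Proposition on the lowest two-sided cell, for the parameters $b>c$, $a-2b+c>0$, $a-b-c<0$ one finds $\tilde{\nu}=2a+2b$, attained only on $I:=\{s_{1},s_{2}\}$; hence $\bS=\{I\}$, $c_{0}=\{w_{I}\}$ and $\tc_{0}=\{xw_{I}y\mid \ell(xw_{I}y)=\ell(x)+\ell(w_{I})+\ell(y)\}$ is the lowest two-sided cell. A left-connected component $\tc_{0}^{j}$ of $\tc_{0}$ is obtained by fixing the right-hand factor $w_{I}y$ and letting the left factor vary, so reading off the explicit partition (Figure 12, cf. \cite[\S5]{bremke}) shows that its element of minimal length is $u=u_{0}^{j}=w_{I}y$ with $\ell(u)=\ell(w_{I})+\ell(y)$. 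In particular every $s\in I$ satisfies $su<u$, since $sw_{I}<w_{I}$.

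Next comes the algebraic core. For any $w\in W$ and $s\in S$ with $sw<w$ one has $C_{s}C_{w}=(v^{L(s)}+v^{-L(s)})C_{w}$, and since $C_{s}=T_{s}+v^{-L(s)}$ this gives $T_{s}C_{w}=v^{L(s)}C_{w}$. Applying this to $w=u$ for each $s\in I$ and iterating along a reduced expression yields $T_{w'}C_{u}=v^{L(w')}C_{u}$ for every $w'\in W_{I}$. Now I would use the bijection $X_{I}\times W_{I}\to W$: writing $w=xw'$ with $x\in X_{I}$, $w'\in W_{I}$ and $\ell(w)=\ell(x)+\ell(w')$, we get $T_{w}C_{u}=T_{x}T_{w'}C_{u}=v^{L(w')}T_{x}C_{u}$. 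Hence $\cH C_{u}=\langle T_{x}C_{u}\mid x\in X_{I}\rangle$, and because the leading terms $T_{xu}=T_{xw_{I}y}$ (reduced, as $x\in X_{I}$) are pairwise distinct these elements form an $\cA$-basis, so $\cH C_{u}$ is a left ideal.

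Finally I would match $\cM_{\{u\}}$ with $\cH C_{u}$ by showing $X_{u}=X_{I}$, i.e. $wu\in\tc_{0}^{j}\Leftrightarrow w\in X_{I}$; granting this, $\cM_{\{u\}}=\langle T_{x}C_{u}\mid x\in X_{u}\rangle=\langle T_{x}C_{u}\mid x\in X_{I}\rangle=\cH C_{u}$ is a left ideal. The clean, parameter-uniform part is the eigenvalue computation of the third paragraph; the main obstacle is the combinatorial input of the first and last paragraphs — namely that the minimal element of each left cell of $\tc_{0}$ factors as $w_{I}y$ and that the left cell $\tc_{0}^{j}=X_{u}u$ equals $\{xw_{I}y\mid x\in X_{I}\}$. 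This relies on the explicit structure of the lowest two-sided cell rather than on the not-yet-established properties {\bf P}, and is exactly the point to verify carefully against the computed partition.
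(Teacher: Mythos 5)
Your third paragraph is correct and is the clean part of the argument: since $su<u$ for every $s\in I$, one gets $T_{w'}C_{u}=v^{L(w')}C_{u}$ for all $w'\in W_{I}$, hence $\cH C_{u}=\sum_{x\in X_{I}}\cA\,T_{x}C_{u}$; and since $e\in X_{u}$ and $\cM_{\{u\}}\subseteq\cH C_{u}$, the claim is indeed equivalent to the equality $\cM_{\{u\}}=\cH C_{u}$. The gap is that everything then hinges on $X_{u}=X_{I}$, which you assert (``granting this'') but never prove, and the one justification you do offer is a non sequitur: $x\in X_{I}$ guarantees only $\ell(xw_{I})=\ell(x)+\ell(w_{I})$, not $\ell(xw_{I}y)=\ell(x)+\ell(w_{I})+\ell(y)$. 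So for the elements $u=w_{I}y\in U_{0}$ with $y\neq e$ (seven of the eight here), the words $xu$ need not be reduced for $x\in X_{I}$, and the ``pairwise distinct reduced leading terms $T_{xu}$'' basis argument collapses; only the first letter of $y$ is protected, since $\mathcal{R}(xw_{I})\supseteq I$ cannot also contain $s_{3}$, while nothing in your argument controls cancellations with the later letters of $y$. In general one only has $X_{u}\subseteq\{x\mid\ell(xu)=\ell(x)+\ell(u)\}\subseteq X_{I}$, and even for $u=w_{I}$ you would still have to show that the entire right-descent class $\{z\mid\mathcal{R}(z)\supseteq I\}=X_{I}w_{I}$ is a single left-connected component of $\tc_{0}$. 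These deferred statements are not side checks: they are the whole content of the claim, namely that the smaller family $\{T_{x}C_{u}\mid x\in X_{u}\}$ already spans $\cH C_{u}$, i.e.\ that left multiplication by $T_{s}$ never leads outside it.

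That closure statement is exactly what the paper's proof supplies, by citation: \cite[Lemma 5.4]{jeju2} establishes it through explicit inductive Kazhdan--Lusztig computations of the same kind as the displayed formulas for $T_{s_{1}s_{2}s_{1}}C_{u^{2}_{1}}$ later in this section, where parameter-dependent correction terms (such as $\mu C_{u^{4}_{0}}$ with $\mu=v^{-a+2c}+v^{a-2c}$) appear -- precisely the phenomenon a naive leading-term argument must rule out or absorb, and for $U_{0}$ ruling it out requires the detailed combinatorial description of the lowest two-sided cell rather than the eigenvector identity alone. So your proposal reproduces the (correct) first step of the lowest-cell analysis but stops short of its real difficulty; completed honestly, it would merge with the cited proof rather than bypass it.
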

\begin{proof}
This has been done in \cite[Lemma 5.4]{jeju2}. 
\end{proof}


\begin{Cl}
Assume that $a< 2c$. Then, the following $\cA$-submodules of $\cH$ are left ideals
$$\cM_{\{u^1_{1},u^3_{0}\}},\ \cM_{\{u^2_{1},u^5_{0}\}}, \ \cM_{\{u^3_{1},u^6_{0}\}},\ \cM_{\{u^4_{1},u^8_{0}\}}.$$
Assume that $a\geq 2c$. Then, the following $\cA$-submodules of $\cH$ are left ideals
$$\cM_{\{u^1_{1},u^3_{0}\}},\ \cM_{\{u^2_{1},u^{4}_{0}, u^5_{0}\}}, \ \cM_{\{u^3_{1},u^6_{0}\}},\ \cM_{\{u^4_{1},u_{0}^{1},u^8_{0}\}}.$$

\end{Cl}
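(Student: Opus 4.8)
The plan is to prove each module is a left ideal by verifying directly that it is stable under left multiplication by the generators, following the pattern already used for the ideals $\cM_{\{u\}}$ with $u\in U_{0}$ in \cite[Lemma 5.4]{jeju2}. Since $\cM_{V}=\sg T_{x}C_{v}|\ v\in V,\ x\in X_{v}\sd_{\cA}$, it is enough to show that $T_{s}(T_{x}C_{v})\in\cM_{V}$ for every $s\in S$, $v\in V$ and $x\in X_{v}$. Writing $T_{s}T_{x}=T_{sx}$ if $\ell(sx)>\ell(x)$ and $T_{s}T_{x}=T_{sx}+(v^{L(s)}-v^{-L(s)})T_{x}$ otherwise, and using that $T_{x}C_{v}$ is already a basis element of $\cM_{V}$, the whole problem reduces to controlling the single element $T_{sx}C_{v}$. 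When $sx\in X_{v}$ this is again a basis element and there is nothing to prove, so the only configurations that matter are the ``wall crossings'', where $sx\notin X_{v}$, that is, where $s$ pushes $xu_{i}^{j}$ out of the left-connected component $\tc_{i}^{j}$.

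At such a wall crossing I would expand $T_{sx}C_{v}$ in the basis $\cB$. Since $\cM$ is a left ideal (condition {\bf I4}) we have $T_{sx}C_{v}\in\cM$, and by the very definition of $\preceq$ every generator $T_{x'}C_{u}$ occurring in this expansion has $u\preceq v$; by $(\dag)$ the only such $u$ other than $v$ itself lie in $U_{0}$. Thus, for $v=u_{1}^{k}$, showing that $\cM_{V}$ is a left ideal is equivalent to determining exactly which elements of $U_{0}$ appear in these boundary expansions and checking that they all belong to $V$, i.e.\ that $V$ is a $\preceq$-down-set. The Kazhdan-Lusztig polynomials needed for the weight functions $L\in\cF$ are computed via \cite[Proposition 3.3]{jeju1}, and the infinitely many wall crossings collapse, by the translation symmetry of the alcove picture of Figure 12, to a finite list of boundary types.

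The main obstacle, and the source of the case split, is concentrated at the walls bounding $\tc_{1}^{2}$ against $\tc_{0}^{4}$ and $\tc_{1}^{4}$ against $\tc_{0}^{1}$. When $s$ carries $xu_{1}^{2}$ across the wall towards $\tc_{0}^{4}$ (respectively $xu_{1}^{4}$ towards $\tc_{0}^{1}$), the coefficient with which the generator attached to $u_{0}^{4}$ (respectively $u_{0}^{1}$) enters the expansion of $T_{sx}C_{v}$ is a Kazhdan-Lusztig-type polynomial whose vanishing is governed by the sign of $a-2c$. I expect that for $a<2c$ it vanishes, so that $u_{0}^{4}\not\preceq u_{1}^{2}$ and $u_{0}^{1}\not\preceq u_{1}^{4}$, whence the down-sets are $\{u_{1}^{2},u_{0}^{5}\}$ and $\{u_{1}^{4},u_{0}^{8}\}$; while for $a\geq 2c$ this coefficient is non-zero, forcing $u_{0}^{4}\preceq u_{1}^{2}$ and $u_{0}^{1}\preceq u_{1}^{4}$ and thereby enlarging the down-sets to $\{u_{1}^{2},u_{0}^{4},u_{0}^{5}\}$ and $\{u_{1}^{4},u_{0}^{1},u_{0}^{8}\}$. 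Isolating this one boundary coefficient uniformly over the chamber is the delicate step, and it is exactly here that the explicit polynomial computations become indispensable; note that $a=2c$ is only an internal threshold of $\cF$ affecting the intermediate pre-order, not an essential hyperplane, which is consistent with the cell partition being the same throughout $\cF$.

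For the two remaining modules the threshold is irrelevant: the analogous boundary expansions for $u_{1}^{1}$ and $u_{1}^{3}$ never bring in an element of $U_{0}$ beyond $u_{0}^{3}$ and $u_{0}^{6}$ respectively, so $\{u_{1}^{1},u_{0}^{3}\}$ and $\{u_{1}^{3},u_{0}^{6}\}$ are $\preceq$-down-sets for every $L\in\cF$. Once the down-set property is established in each case, stability under left multiplication follows and each $\cM_{V}$ is a left ideal; equivalently, the conclusion can be read off by applying Theorem \ref{Gind} to each of the (at most three-element) down-sets just described.
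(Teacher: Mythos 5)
Your strategic picture is the right one, and it matches the paper's: for each of the four modules everything hinges on one critical boundary expansion, and the extra generators $u_{0}^{4}$ (for $u_{1}^{2}$) and $u_{0}^{1}$ (for $u_{1}^{4}$) enter precisely when $a\geq 2c$. In the paper this is the identity
$T_{s_{1}s_{2}s_{1}}C_{u_{1}^{2}}=C_{u_{0}^{5}}-v^{-a}C_{s_{2}s_{1}u_{1}^{2}}+v^{-2a+b}C_{u_{1}^{2}}+\mu C_{u_{0}^{4}}$,
with $\mu=0$ for $a<2c$, $\mu=1$ for $a=2c$ and $\mu=v^{a-2c}+v^{-(a-2c)}$ for $a>2c$. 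But as a proof your text has a genuine gap: this decisive step is only announced (``I expect that for $a<2c$ it vanishes''), whereas the entire mathematical content of the claim \emph{is} this computation; the surrounding reduction is routine. The paper actually carries it out, and its handling of the infinitely many configurations is also more careful than yours: it reduces to showing $T_{x}C_{u_{1}^{2}}\in\cM$ for \emph{all} $x\in W$ (your ``only wall crossings matter'' framing overlooks that for $\ell(sx)<\ell(x)$ one needs $T_{sx}C_{v}\in\cM_{V}$ with $sx\notin X_{v}$ possible), and then runs an induction on $\ell(x)$ via the normal forms $x=x'w$ with $w\in W_{\{s_{2},s_{3}\}}$ (absorbed through $T_{w}C_{u_{1}^{2}}=v^{L(w)}C_{u_{1}^{2}}$) and $x=x's_{1}s_{2}s_{1}$. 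It is this factorization, not your appeal to translation symmetry of the alcove picture, that rigorously collapses everything onto the single expansion above, after which the previously established ideals $\cM_{\{u\}}$, $u\in U_{0}$ (from \cite[Lemma 5.4]{jeju2}), and the induction hypothesis close the argument.

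There is also a circularity in your scaffolding. At this point of the development neither $(\dag)$ nor the Hasse diagram of $\preceq$ is available: they are established only \emph{after}, and by means of, the present claim; and Theorem \ref{Gind} additionally requires Condition {\bf I5}, which is verified in the following claim. (The implication ``$V$ a $\preceq$-down-set $\Rightarrow$ $\cM_{V}$ a left ideal'' does follow from {\bf I4} alone, as in the proof of Theorem \ref{Gind}, and {\bf I4} is harmless here since $\cB$ is a basis of $\cH$; but knowing that your listed sets $V$ are down-sets presupposes control of all expansions $T_{y}C_{v}$, which is again exactly the computation you deferred. The paper's induction avoids $(\dag)$ entirely.) Finally, a small geometric slip: the critical crossing from $\tc_{1}^{2}$ lands in $\tc_{0}^{5}$ (indeed $s_{1}s_{2}s_{1}u_{1}^{2}=u_{0}^{5}$); the generator attached to $u_{0}^{4}$ enters not through a wall shared with $\tc_{0}^{4}$ but as a lower Kazhdan-Lusztig term of that expansion. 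None of this is unfixable --- your plan is the paper's plan --- but without the explicit Kazhdan-Lusztig computations, carried out uniformly over $L\in\cF$ via \cite[Proposition 3.3]{jeju1}, the discrimination between $a<2c$ and $a\geq 2c$, i.e.\ the actual statement, remains unproven.
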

\begin{proof}
Assume that $a<2c$. We show that $\cM:=\cM_{\{u^2_{1},u^5_{0}\}}$ is a left ideal. Using the previous claim it is enough to check that $T_{x}C_{u^2_{1}}\in\cM$ for all $x\in W$. We proceed by induction on $\ell(x)$. If $\ell(x)=0$ this is clear. Assume that $\ell(x)>0$. If $x\in X_{u^2_{1}}$ then it is clear. Thus we assume that $x\notin X_{1}$. If $x\notin X_{\{s_{2},s_{3}\}}$ then let $w\in W_{\{s_{2},s_{3}\}}$ and $x'\in X_{\{s_{2},s_{3}\}}$ such that $x=x'w$ and $\ell(x)=\ell(x')+\ell(w)$. We have
$$T_{x}C_{u^2_{1}}=v^{L(w)}T_{x'}C_{u^1_{1}}$$
which lies in $\cM$ by induction. Now assume that $x\in X_{\{s_{2},s_{3}\}}$ and that $x\notin X_{u^2_{1}}$. Then there exists $x'\in W$ such that $x=x's_{1}s_{2}s_{1}$ and $\ell(x)=\ell(x')+3$. We have
\begin{align*}
T_{s_{1}}C_{u^2_{1}}&=C_{s_{1}u^2_{1}}-v^{-a} C_{u^2_{1}}\\
T_{s_{2}s_{1}}C_{u^2_{1}}&=C_{s_{2}s_{1}u^2_{1}}-v^{-b} C_{s_{1}u^2_{1}}-v^{-a+b} C_{u^2_{1}}\\
T_{s_{1}s_{2}s_{1}}C_{u^2_{1}}&=C_{u_{0}^{5}}-v^{-a} C_{s_{2}s_{1}u^2_{1}}+v^{-2a+b} C_{u^2_{1}}
\end{align*}
From there we get that  $T_{s_{1}s_{2}s_{1}}C_{u^2_{1}}\in \cM$ and the result follows by induction and by the previous claim.\\
Now assume that $a\geq 2c$. We show that $\cM:=\cM_{\{u^2_{1},u^{4}_{0}, u^5_{0}\}}$ is a left ideal. Arguing as above it is enough to check that $T_{s_{1}s_{2}s_{1}}C_{u^2_{1}}\in \cM$. We have 
\begin{align*}
T_{s_{1}}C_{u^2_{1}}&=C_{s_{1}u^2_{1}}-v^{-a} C_{u^2_{1}}\\
T_{s_{2}s_{1}}C_{u^2_{1}}&=C_{s_{2}s_{1}u^2_{1}}-v^{-b} C_{s_{1}u^2_{1}}-v^{-a+b} C_{u^2_{1}}\\
T_{s_{1}s_{2}s_{1}}C_{u^2_{1}}&=C_{u_{0}^{5}}-v^{-a} C_{s_{2}s_{1}u^2_{1}}+v^{-2a+b} C_{u^2_{1}}+\mu C_{u_{0}^{4}}
\end{align*}
where $\mu=1$ if $a=2c$ and $\mu=v^{-a+2c}+v^{a-2c}$ if $a>c$. The result follows. We proceed similarly for the other submodules.
\end{proof}
Proceeding in the same way, we can describe the order $\preceq$ on $U$ by the following Hasse diagram. The dashed edges only hold for parameters such that $a\geq 2c$. We see that property $(\dag)$ holds.

\newpage
\begin{center}
\psset{unit=.65cm}
\begin{pspicture}(0,0)(16,16)

\pscircle(0,0){.35}
\rput(0,0){{\footnotesize $u^1_{0}$}}

\pscircle(2,0){.35}
\rput(2,0){{\footnotesize $u^2_{0}$}}

\pscircle(4,0){.35}
\rput(4,0){{\footnotesize $u^3_{0}$}}

\pscircle(6,0){.35}
\rput(6,0){{\footnotesize $u^4_{0}$}}

\pscircle(8,0){.35}
\rput(8,0){{\footnotesize $u^5_{0}$}}

\pscircle(10,0){.35}
\rput(10,0){{\footnotesize $u^6_{0}$}}

\pscircle(12,0){.35}
\rput(12,0){{\footnotesize $u^7_{0}$}}

\pscircle(14,0){.35}
\rput(14,0){{\footnotesize $u^8_{0}$}}

\pscircle(16,0){.35}
\rput(16,0){{\footnotesize $u^1_{0}$}}

\pscircle(4,2){.35}
\rput(4,2){{\footnotesize $u^1_{1}$}}

\pscircle(8,2){.35}
\rput(8,2){{\footnotesize $u^2_{1}$}}

\pscircle(10,2){.35}
\rput(10,2){{\footnotesize $u^3_{1}$}}

\pscircle(14,2){.35}
\rput(14,2){{\footnotesize $u^4_{1}$}}

\pscircle(1,4){.35}
\rput(1,4){{\footnotesize $u^1_{2}$}}

\pscircle(5,4){.35}
\rput(5,4){{\footnotesize $u^2_{2}$}}

\pscircle(9,4){.35}
\rput(9,4){{\footnotesize $u^3_{2}$}}

\pscircle(13,4){.35}
\rput(13,4){{\footnotesize $u^4_{2}$}}

\pscircle(1,6){.35}
\rput(1,6){{\footnotesize $u^{1}_{3}$}}

\pscircle(3,8){.35}
\rput(3,8){{\footnotesize $u^1_{4}$}}

\pscircle(9,8){.35}
\rput(9,8){{\footnotesize $u^2_{4}$}}

\pscircle(11,8){.35}
\rput(11,8){{\footnotesize $u^3_{4}$}}

\pscircle(13,8){.35}
\rput(13,8){{\footnotesize $u^4_{4}$}}

\pscircle(11,9.5){.35}
\rput(11,9.5){{\footnotesize $u^{1}_{5}$}}

\pscircle(9,11){.35}
\rput(9,11){{\footnotesize $u^1_{6}$}}

\pscircle(13,11){.35}
\rput(13,11){{\footnotesize $u^2_{6}$}}

\pscircle(8.5,12.5){.35}
\rput(8.5,12.5){{\footnotesize $u^{1}_{7}$}}

\pscircle(10,14){.35}
\rput(10,14){{\footnotesize $u_{8}^{1}$}}

\psline(10,13.65)(8.85,12.5)
\psline(10,13.65)(13,11.35)
\psline(10,13.65)(6,12.5)
\psline(6,12.5)(3,8.35)

\psline(4,.35)(4,1.65)
\psline(8,.35)(8,1.65)
\psline(10,.35)(10,1.65)
\psline(14,.35)(14,1.65)

\psline[linestyle=dashed](8,1.65)(6,.35)
\psline[linestyle=dashed](16,.35)(14,1.65)

\psline(1,3.65)(0,.35)
\psline(1,3.65)(2,.35)

\psline(5,3.65)(4,2.35)
\psline(5,3.65)(6,.35)

\psline(9,3.65)(8,2.35)
\psline(9,3.65)(10,2.35)
\psline(9,3.65)(6.5,2)(6,.35)

\psline(13,3.65)(12,.35)
\psline(13,3.65)(14,2.35)
\psline(13,3.65)(15.5,2)(16,.35)

\psline(1,5.65)(1,4.35)

\psline(3,7.65)(1,6.35)
\psline(3,7.65)(5,4.35)

\psline(9,7.65)(9,4.35)

\psline(11,7.65)(12,.35)
\psline(11,7.65)(10,2.35)

\psline(13,7.65)(13,4.35)

\psline(11,9.15)(11,8.35)

\psline(9,10.65)(9,8.35)

\psline(13,10.65)(11,9.85)
\psline(13,10.65)(13,8.35)

\psline(8.5,12.15)(9,11.35)
\psline(8.5,12.15)(5,4.35)

\rput(8,-1){\textsc{Figure 13.} Hasse diagram of the pre-order $\preceq$ on $U$.}
\end{pspicture}
\end{center}
$\ $\\
\begin{Cl}
Condition {\bf I5} holds for $U$ and the corresponding sets $X_{u}$.
\end{Cl}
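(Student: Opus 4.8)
The plan is to verify the congruence {\bf I5} for each fixed $v\in U$ by induction on $\ell(y)$, $y\in X_{v}$. The base case $y=e$ is immediate, since $T_{e}C_{v}=C_{v}\equiv T_{v}\mod\cH_{<0}$. For the inductive step I first observe that $X_{v}$ is closed under taking left factors: if $y\in X_{v}$ and $y=sy'$ with $\ell(y)=\ell(y')+1$, then $\ell(y'v)=\ell(yv)-1=\ell(y')+\ell(v)$, so $y'\in X_{v}$ and the hypothesis applies to $y'$. Hence it suffices to analyse $T_{y}C_{v}=T_{s}(T_{y'}C_{v})$. A useful preliminary is that the standard support of $T_{y}C_{v}$ lies in lengths $\le\ell(yv)$ with $T_{yv}$ as unique top term: this follows from $C_{v}=\sum_{z\le v}P_{z,v}T_{z}$ and $\ell(yz)\le\ell(y)+\ell(z)$, equality forcing $z=v$. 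Since the basis $\cB$ is unitriangular for this length filtration, every pair $(x,u)$ occurring in the $\cB$-expansion of $T_{y}C_{v}$ satisfies $\ell(xu)\le\ell(yv)$, with equality only for $(x,u)=(y,v)$.

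Next I would rewrite the inductive hypothesis as an exact expansion in $\cB$. Any element of $\cH_{<0}\cap\cM$ has all its $\cB$-coordinates in $\cA_{<0}$, because the transition between $\{T_{x}C_{u}\}$ and $\{T_{xu}\}$ is unitriangular with off-diagonal entries in $\cA_{<0}$. Thus the hypothesis reads $T_{y'}C_{v}=\sum_{u\in U,\,x\in X_{u}}c_{x,u}T_{x}C_{u}$, where $c_{y',v}\equiv 1$, where $c_{x,u}\equiv a_{xu,y'v}$ modulo $\cA_{<0}$ for $xu\sqs y'v$, and where $c_{x,u}\in\cA_{<0}$ for all remaining pairs. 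Applying $T_{s}$ and using $T_{s}T_{x}=T_{sx}$ when $sx>x$, and $T_{s}T_{x}=T_{sx}+(v^{L(s)}-v^{-L(s)})T_{x}$ when $sx<x$, together with the reduction $T_{x}C_{u}=v^{L(w)}T_{x'}C_{u'}$ inside a parabolic factor (each such reduction being an instance of $u'\preceq u$), expresses $T_{y}C_{v}$ as an $\cA$-combination of the $T_{x}C_{u}$. The leading term is $T_{yv}$ with coefficient $1$, arising from $c_{y',v}\,T_{s}T_{y'}C_{v}$. Every pair $(x,u)$ that occurs has $u\preceq v$ automatically, by the very definition of $\preceq$, since $T_{x}C_{u}$ then appears in $T_{y}C_{v}$; the length bound above gives $\ell(xu)<\ell(yv)$ for the non-leading terms, while the Bruhat relation $xu<yv$ is read off from the explicit description of the cells $\tc_{i}^{j}$ in Figure 12. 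Together these give $xu\sqs yv$, as {\bf I5} demands.

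The hard part will be the contribution of the factor $(v^{L(s)}-v^{-L(s)})$ at a descent $sx<x$: its positive part $v^{L(s)}$, multiplied by a coordinate $c_{x,u}\in\cA_{<0}$, can raise the degree to $\ge 0$ and therefore survive modulo $\cH_{<0}$. Such a term is precisely what the crude congruence $T_{y'}C_{v}\equiv T_{y'v}\mod\cH_{<0}$ of \cite{jeju3} fails to control, which is exactly why the weaker version of {\bf I5} carrying the correction terms $\sum_{xu\sqs yv}a_{xu,yv}T_{x}C_{u}$ is the natural statement here (Remark \ref{gen}). To handle these terms one must know the coordinates $c_{x,u}$ precisely, not merely that they lie in $\cA_{<0}$; this input is supplied by the Kazhdan--Lusztig polynomials computed for all $L\in\cF$ via \cite[Proposition 3.3]{jeju1}, and by the products already displayed in the $C$-basis (as in the computation of $T_{s_{1}s_{2}s_{1}}C_{u_{1}^{2}}$), which exhibit precisely the surviving $C_{xu}$ and confirm $xu\sq yv$ in each case. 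The verification thus reduces to a finite—though lengthy—case analysis over the elements $u\in U$, the generators $s\in S$, and the facet $\cF$, carried out with the help of GAP; property $(\dag)$ and the Hasse diagram of $\preceq$ in Figure 13 guarantee that no term outside the admissible range $xu\sq yv$ is ever produced.
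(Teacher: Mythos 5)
Your proposal rests on two preliminary lemmas, and both are false as stated. First, you claim that the transition between $\cB=\{T_{x}C_{u}\}$ and the standard basis is unitriangular with off-diagonal entries in $\cA_{<0}$, so that elements of $\cH_{<0}\cap\cM$ have all $\cB$-coordinates in $\cA_{<0}$. This is contradicted by the paper's own computation: for $u=u_{2}^{1}$ one has $T_{s_{1}s_{2}}C_{u}\equiv T_{s_{1}s_{2}u}+v^{a-c}\,T_{s_{1}s_{2}s_{1}s_{2}} \bmod \cH_{<0}$, and on this facet $a>c$ (indeed $a>2b-c>c$), so the off-diagonal entry $v^{a-c}$ has positive degree. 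The underlying point is that $\ell(xu)=\ell(x)+\ell(u)$ does not force $\ell(xz)=\ell(x)+\ell(z)$ for all $z\leq u$ (already in type $A_{2}$: $x=s_{1}$, $u=s_{2}s_{1}$, $z=s_{1}$), so products $T_{x}T_{z}$ inside $T_{x}C_{u}$ can produce positive-degree coefficients. Note that if your lemma were true, the strong form $T_{y}C_{v}\equiv T_{yv}\bmod\cH_{<0}$ of {\bf I5} would follow almost formally, whereas the whole reason the paper generalizes {\bf I5} (Remark \ref{gen}) is that this fails for $u_{2}^{1}$. Second, your left-factor closure of $X_{v}$ is a non sequitur: $X_{u_{i}^{j}}$ is defined by the cell-theoretic condition $wu_{i}^{j}\in\tc_{i}^{j}$, and length-additivity of $y'v$ does not place $y'v$ in $\tc_{i}^{j}$. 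Concretely, for $u\in U_{1}$ the paper classifies $X_{u}$ as the elements $w_{1}^{n},\,s_{1}w_{1}^{n},\,s_{2}s_{1}w_{1}^{n},\,s_{3}s_{2}s_{1}w_{1}^{n}$ with $w_{1}=s_{1}s_{2}s_{3}s_{2}$; the left factor $s_{2}s_{3}s_{2}w_{1}^{n}$ of $w_{1}^{n+1}\in X_{u}$ is not of these forms, so your inductive step $T_{y}C_{v}=T_{s}(T_{y'}C_{v})$ with $y'\in X_{v}$ is simply unavailable there.

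More fundamentally, your plan is missing the idea that actually makes the verification finite. The sets $X_{u}$ are infinite, and an induction on $\ell(y)$ whose hypothesis only controls coordinates modulo $\cA_{<0}$ cannot close: at each descent the factor $v^{L(s)}$ acts on unknown negative parts, and there are infinitely many steps, so "a finite case analysis over $u\in U$, $s\in S$ and the facet" does not exist in your scheme. The paper's proof instead exploits the periodic normal form of each $X_{u}$ (finitely many prefixes times powers of $w_{1}$ or $w_{2}$): it checks the congruence for the finitely many short products by explicit computation, and then proves a support lemma — every $T_{z}$ occurring in, e.g., $T_{s_{2}s_{3}s_{2}s_{1}}C_{u}$ satisfies $\ell(xz)=\ell(x)+\ell(z)$ for \emph{all} $x\in X_{u}$ — so that left multiplication by $T_{x}$ acts as $T_{z}\mapsto T_{xz}$ exactly, preserving $\cA_{<0}$-coefficients and carrying the identified correction terms (such as $v^{a-c}\,T_{xs_{3}}C_{s_{1}s_{2}s_{1}s_{2}}$) into the admissible form required by the generalized {\bf I5}. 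This uniform propagation is the content of the claim, and no amount of per-step GAP computation replaces it. Finally, your appeal to $(\dag)$ and the Hasse diagram of $\preceq$ to bound the terms is circular: in the paper those are outputs of the same family of explicit computations, not inputs available to the proof of {\bf I5}.
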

\begin{proof}
For $u\in U_{0}$ this has been done in \cite[Lemma 5.2]{jeju2} where it is shown that
$$T_{x}C_{u}\equiv T_{xu} \mod \cH_{<0}$$
for all $u\in U_{0}$ and $x\in X_{u}$.\\

Let $u\in U_{1}$. We set $w_{1}=s_{1}s_{2}s_{3}s_{2}$. One can check that $\ell(w_{1}^n)=n\ell(w_{1})$ and that any element of $X_{u}$ can be written in the following form for some $n\in \nN$
$$w_{1}^n, s_{1}w_{1}^n, s_{2}s_{1}w_{1}^n,s_{3}s_{2}s_{1}w_{1}^n.$$
We have
$$\begin{array}{rll}
T_{s_{1}}C_{u}\equiv&T_{s_{1}u}&\mod \cH_{<0}\\
T_{s_{2}s_{1}}C_{u}\equiv&T_{s_{2}s_{1}u}&\mod \cH_{<0}\\
T_{s_{3}s_{2}s_{1}}C_{u}\equiv&T_{s_{3}s_{2}s_{1}u}&\mod \cH_{<0}\\
T_{s_{2}s_{3}s_{2}s_{1}}C_{u}\equiv&T_{s_{2}s_{3}s_{2}s_{1}u}&\mod \cH_{<0}.\\
\end{array}$$
Furthermore, if $T_{z}$ appears with a non-zero coefficient in the expression of \newline $T_{s_{2}s_{3}s_{2}s_{1}}C_{u}$ in the standard basis then we have $\ell(xz)=\ell(x)+\ell(z)$ for all $x\in X_{u}$. Hence we have 
$$T_{x}C_{u}\equiv T_{xu}  \mod \cH_{<0}$$
for all $x\in X_{u}$ and Condition {\bf I5} holds for $u\in U_{1}$.\\

Let $u\in U_{2}$ and let $w_{2}=s_{1}s_{3}s_{2}$. One can check that $\ell(w_{2}^n)=n\ell(w_{2})$ and that any element of $X_{u}$ can be written in the following form for some $n\in \nN$
$$w_{2}^n,s_{2}w_{2}^n,s_{1}s_{2}w_{2}^n,s_{3}s_{2}w_{2}^n.$$
Assume that $u\neq u^1_{2}$. Then we have
$$\begin{array}{rll}
T_{s_{2}}C_{u}\equiv&T_{s_{2}u}&\mod \cH_{<0}\\
T_{s_{1}s_{2}}C_{u}\equiv&T_{s_{1}s_{2}u}&\mod \cH_{<0}\\
T_{s_{3}s_{2}}C_{u}\equiv&T_{s_{3}s_{2}u}&\mod \cH_{<0}\\
T_{s_{1}s_{3}s_{2}}C_{u}\equiv&T_{s_{1}s_{3}s_{2}u}&\mod \cH_{<0}.\\
\end{array}$$
Furthermore, if $T_{z}$ appears with a non-zero coefficient in the expression of $T_{s_{1}s_{3}s_{2}}C_{u}$ in the standard basis then we have $\ell(xz)=\ell(x)+\ell(z)$ for all $x\in X_{u}$. Hence we have 
$$T_{x}C_{u}\equiv T_{xu}  \mod \cH_{<0}$$
for all $x\in X_{u}$ and Condition {\bf I5} holds.\\
Now let $u=u^1_{2}$. In that case we have
$$\begin{array}{rll}
T_{s_{2}}C_{u}\equiv&T_{s_{2}u}&\mod \cH_{<0}\\
T_{s_{1}s_{2}}C_{u}\equiv&T_{s_{1}s_{2}u}+v^{a-c}\ T_{s_{1}s_{2}s_{1}s_{2}}&\mod \cH_{<0}\\
T_{s_{3}s_{2}}C_{u}\equiv&T_{s_{3}s_{2}u}&\mod \cH_{<0}\\
T_{s_{1}s_{3}s_{2}}C_{u}\equiv&T_{s_{1}s_{3}s_{2}u}+v^{a-c}\ T_{s_{3}s_{1}s_{2}s_{1}s_{2}}&\mod \cH_{<0}.\\
\end{array}$$
One can show that 
$$\begin{array}{rlc}
T_{s_{1}s_{2}}C_{u}\equiv& T_{s_{1}s_{2}u}+v^{a-c}\ C_{s_{1}s_{2}s_{1}s_{2}}&\mod \cH_{<0}\\
\end{array}$$
and
$$\begin{array}{rlc}
T_{s_{1}s_{3}s_{2}}C_{u}=&T_{s_{1}s_{3}s_{2}u}+v^{a-c}\ T_{s_{3}}C_{s_{1}s_{2}s_{1}s_{2}}&\mod \cH_{<0}.\\
\end{array}$$
Furthermore if $T_{z}$ appears with a non-zero coefficient in the expression of 
$$T_{s_{1}s_{3}s_{2}}C_{u}-T_{s_{1}s_{3}s_{1}u}-v^{a-c}\ T_{s_{3}}C_{s_{1}s_{2}s_{1}s_{2}}$$
in the standard basis then we have $\ell(xz)=\ell(x)+\ell(z)$ for all $x\in X_{u}$. We obtain that
$$\begin{array}{rlc}
T_{x}T_{s_{1}s_{3}s_{2}}C_{u}=&T_{xs_{1}s_{3}s_{2}u}+v^{a-c}\ T_{xs_{3}}C_{s_{1}s_{2}s_{1}s_{2}}&\mod \cH_{<0}.\\
\end{array}$$
for all $x\in X_{u}$. Thus we see that Condition {\bf I5} holds. \\
The claim follows by applying the same methods for the other $U_{i}$'s.
\end{proof}

From there using the Hasse diagram of the pre-order $\preceq$ on $U$, Theorem \ref{Gind}, Remark \ref{utsc} and Lemma \ref{itsc}, we see that partition of $W$ presented in Figure 12 is the actual partition of $W$ into left and two-sided cells (see Claim \ref{cij}). 
Furthermore, the order on the left cells induced by $\leq_{\cL}$ is given by the Hasse diagram of the pre-order $\preceq$ on $U$ by simply replacing $u_{i}^{j}$ by the corresponding cell $\tc_{i}^{j}$.

\end{document}